\documentclass{amsart}

\usepackage[english]{babel}

\usepackage[letterpaper,top=2cm,bottom=2cm,left=3cm,right=3cm,marginparwidth=1.75cm]{geometry}

\usepackage{amsmath}
\numberwithin{equation}{section}
\usepackage{amsthm}
\usepackage{amssymb}
\usepackage{aliascnt}
\usepackage{graphicx}
\usepackage[colorlinks=true, allcolors=blue]{hyperref}

\newtheorem{theorem}{Theorem}[section]
\newaliascnt{definition}{theorem}
\newtheorem{definition}[definition]{Definition}
\aliascntresetthe{definition}
\newaliascnt{lemma}{theorem}
\newtheorem{lemma}[lemma]{Lemma}
\aliascntresetthe{lemma}
\newaliascnt{corollary}{theorem}
\newtheorem{corollary}[corollary]{Corollary}
\aliascntresetthe{corollary}
\newaliascnt{example}{theorem}

\aliascntresetthe{example}
\newaliascnt{proposition}{theorem}
\newtheorem{proposition}[proposition]{Proposition}
\aliascntresetthe{proposition}
\newaliascnt{property}{theorem}

\aliascntresetthe{property}
\newaliascnt{remark}{theorem}
\newtheorem{remark}[remark]{Remark}
\aliascntresetthe{remark}
\newaliascnt{conjecture}{theorem}

\aliascntresetthe{conjecture}

\title{Global Well-Posedness for the Benjamin-Ono Equation with Small Periodic Initial Data in Analytic Spaces}
\author{Yubo Wang}
\address{School of Mathematical Sciences, University of Chinese Academy of Sciences, Beijing 100049, China}
\email{wangyubo22@mails.ucas.ac.cn}

\begin{document}

\maketitle
%\tableofcontents
\begin{abstract}
We establish global well-posedness of the Benjamin--Ono equation for sufficiently small, zero-mean periodic initial data in the analytic Sobolev space $H^{\rho,1}_0$. The proof combines regularized commuting flows with an exponential spectral energy. A local uniform convexity property of this energy upgrades weak endpoint compactness to strong continuity, yielding a global flow in $C(\mathbb{R};H^{\rho,1}_0)$ and continuous dependence in the same analytic topology, with no dynamic loss of the prescribed analytic width.
\end{abstract}

\section{Introduction}
The Benjamin--Ono (BO) equation, originally derived to model the unidirectional propagation of internal gravity waves in deep stratified fluids, was introduced by Benjamin \cite{benjamin1967internal}, Davis, Acrivos \cite{davis1967solitary} and later refined by Ono \cite{ono1975algebraic}. It is given by
\begin{equation}\tag{BO} \label{eq:BO}
\partial_t u + \mathcal{H}\partial_{xx} u + u \partial_x u = 0.
\end{equation}
We study the Cauchy problem on the torus $\mathbb{T}=\mathbb{R}/2\pi\mathbb{Z}$:
$$u(t, x)=\sum_n\hat{u}(t,n)e^{inx},\quad u_0(x)=u(0, x)=\sum_n\hat{u}(0,n)e^{inx},$$ 
where $u$ is real-valued and $\mathcal{H}$ denotes the periodic Hilbert transform. A distinctive mathematical feature of the BO equation, distinguishing it from local dispersive models such as the Korteweg--de Vries (KdV) equation, is the non-local dispersive operator $\mathcal{H}\partial_{xx}$. Originating from the infinite-depth fluid regime, this non-locality dictates that solitary wave solutions exhibit algebraic, rather than exponential, spatial decay. We refer to the monograph by Saut \cite{saut2019benjamin}.
The BO equation is a completely integrable system. It admits a Lax pair representation, originally discovered by Nakamura \cite{nakamura1979backlund} and Bock and Kruskal \cite{bock1979two} (cf. also \cite{fokas1983inverse,coifman1990scattering}), an infinite hierarchy of conservation laws, and exact multi-soliton solutions. With the normalized $L^2$ pairing introduced below, three basic conserved quantities are the mean, momentum, and Hamiltonian,
\begin{equation}
M(u):=\langle u,1\rangle_{L^2},\qquad
P(u):=\frac12\langle u,u\rangle_{L^2},\qquad H_{BO}(u):=\frac12\langle u,\mathcal{H}u_x\rangle_{L^2}+\frac16\langle u,u^2\rangle_{L^2}.
 \end{equation}
We use the Hamiltonian convention $\partial_tu=-\partial_x\nabla H_{BO}(u)$, which gives \eqref{eq:BO}. In this paper, we investigate global well-posedness and persistence of a prescribed spatial analytic width for the periodic BO equation. Resolving the flow in the endpoint analytic topology requires overcoming quasilinear derivative losses without allowing the fixed exponential weight in the initial norm to deteriorate dynamically. We address this challenge by combining commuting flows with a geometric rigidification mechanism derived from the spectral theory of the Lax operator.
 
\subsection{Prior Work on Low-Regularity Well-Posedness}

Here we give a quick overview of the history of well-posedness for \eqref{eq:BO}, especially in low regularity Sobolev spaces. we recommend the monograph \cite{klein2021nonlinear} for a comprehensive account.

The Cauchy problem for the BO equation in Sobolev spaces $H^s$ has been extensively studied. Early proofs of well-posedness employed energy arguments. Abdelouhab, Bona, Felland, and Saut \cite{abdelouhab1989nonlocal} proved that \eqref{eq:BO} is locally well-posed for $s>\frac{3}{2}$ and globally well-posed in $H^\infty$. This period culminated in the proof that \eqref{eq:BO} is well-posed in $H^s$ for $s > \frac{3}{2}$ on $\mathbb{T}$ and for $s\ge \frac{3}{2}$ on $\mathbb{R}$ \cite{ponce1991global}. 

A fundamental technical obstruction is that the dispersion provided by $\mathcal{H}\partial_{xx}$ is exactly of second order. This dispersion is structurally insufficient to decouple the high-low frequency interactions induced by the quasilinear convective term $u\partial_x u$ via classical Strichartz estimates. Consequently, Molinet, Saut, and Tzvetkov \cite{molinet2001ill} established that the data-to-solution map is not $C^2$ smooth. Koch and Tzvetkov \cite{koch2005nonlinear} subsequently proved that the flow map fails to be locally uniformly continuous in any neighborhood of the origin for any $s \ge 0$, rendering standard Picard iteration on the Duhamel formulation inapplicable.

By incorporating Strichartz estimates into energy methods, \cite{koch2003local} advanced well-posedness on the line to $s>\frac{5}{4}$. Further refinements of this style of argument in \cite{kenig2003local} led to well-posedness for $s>\frac{9}{8}$.

Tao \cite{tao2004global} introduced a global gauge transformation to control the resonant interactions and obtained well-posedness for $s\ge1$. We refer to \cite{tao2006nonlinear} for further motivation for this gauge transform. Subsequent developments include \cite{burq2006benjamin,molinet2009well,molinet2007global,ionescu2007global}.

Following subsequent developments, the well-posedness problem was resolved in $H^s$ by Killip, Laurens, and Vi{\c{s}}an \cite{killip2024sharp}. To achieve sharp global well-posedness at $s > -1/2$, they utilized the resolvent gauge $m(\kappa, u) = (\mathcal{L}_u + \kappa)^{-1} u_+$ associated with the Lax operator $\mathcal{L}_u$. By employing $m$ as a phase correction within a paradifferential framework, they applied the method of commuting flows introduced in \cite{killip2019kdv} and developed in several subsequent papers (e.g. \cite{talbut2021benjamin}\cite{killip2023well}\cite{bringmann2021global}). Contemporaneously, G\'erard, Kappeler, and Topalov \cite{gerard2023sharp}\cite{gerard2021integrability} independently resolved the sharp Sobolev well-posedness via the construction of a global Birkhoff normal form, G\'erard \cite{gerard2023explicit} also provided an explicit formula for \eqref{eq:BO}.

\subsection{Limitations of Sobolev Theories in the Analytic Topology}

While recent breakthroughs resolve well-posedness in Sobolev spaces, analytic topologies introduce an additional issue: estimates at a fixed exponential Fourier weight generally lose width under nonlinear evolution. We work in the zero-mean analytic Sobolev space $H_0^{\rho,s}$, for $s\ge1$ and a prescribed width $\rho>0$, with norm:
\begin{equation}
\|u\|_{\rho,s}^2=\sum_{n\ne0}\langle n\rangle^{2s}|\widehat u(n)|^2e^{2\rho|n|}<\infty.
\end{equation}

Kato and Masuda \cite{kato1986nonlinear} developed a general persistence theory for analytic solutions, including BO. Gevrey energy methods, originating with Foias and Temam \cite{foias1989gevrey}, provide quantitative lower bounds for analytic widths in a variety of equations; representative dispersive examples include KdV \cite{huang2019new,selberg2015lower} and NLS \cite{tesfahun2017radius}. The question addressed here is more specific: whether a fixed width already present in the initial norm can be retained globally for small periodic BO data, without introducing a time-dependent decreasing weight. We do not claim conservation of the maximal strip of holomorphic continuation.

In the broader context of nonlinear PDEs, the scenario where $\rho(t) \ge \rho(0)$ (or maintains a uniform positive lower bound) is typically reserved for systems with a smoothing mechanism or a specific integrable structure. For dissipative systems like the Navier-Stokes equations, the balance between nonlinear frequency cascading and viscous damping allows one to prove that $\rho(t)$ remains bounded below by a positive constant $c > 0$ for all $t > T$ \cite{foias1989gevrey}, effectively "recovering" analyticity from the flow. In the purely dispersive setting, however, such persistence is far more elusive. While the KdV and NLS equations exhibit algebraic decay of the analytic radius \cite{huang2019new, tesfahun2017radius}, it is conjectured that the underlying integrability and the existence of infinite conservation laws might prevent the radius from vanishing. Notably, for certain traveling wave solutions or solitons, the profile remains unchanged, and thus $\rho(t) \equiv \rho(0)$ is trivially satisfied \cite{li1996analyticity}.

Inspired by the gauge transformation framework introduced by Killip, Laurens and Vişan, \cite{killip2024sharp}, we use the gauge transformation to "algebraize" the derivative operations within the nonlinear terms:
\begin{equation*}
\partial_x m = -\frac{i}{2} \Big( \kappa m + C_+ \big( u(m - 1) \big) \Big).
\end{equation*}
We demonstrate that the prescribed analyticity width $\rho$ is preserved globally under the flow. This phenomenon of a non-decaying analytic radius is particularly striking, as it provides a counter-narrative to the typical contraction observed in most nonlinear dispersive PDEs. However, it should be emphasized that this result is deeply rooted in the specific gauge transformation of \eqref{eq:BO}. Consequently, such a "persistence of analyticity" remains a rare property and may not be directly generalizable to other dispersive models.

\subsection{Methodology}

In this paper, we establish the global well-posedness of the BO equation in analytic Sobolev spaces. Our approach relies on two operator-theoretic mechanisms: mapping the resolvent gauge developed by Killip et al. \cite{killip2024sharp} into a Banach algebra to control the spatial derivative, and constructing a geometric rigidification mechanism via the exponential functional calculus of the Lax operator.

\textbf{1. Resolvent Gauge as an Algebraic Regularizer.} 
Building upon the resolvent gauge introduced in \cite{killip2024sharp}, we analyze its properties within the analytic Sobolev space $H_0^{\rho, s}$ ($s \ge 1$). Because this space constitutes a Banach algebra, the unbounded spatial derivative of the gauge variable can be translated entirely into bounded multilinear operations:
\begin{equation*}
\partial_x m = -\frac{i}{2} \Big( \kappa m + C_+ \big( u(m - 1) \big) \Big).
\end{equation*}
This identity expresses unbounded differential operators in terms of bounded multilinear mappings, permitting a continuous local Picard flow for the regularized $H_\kappa$ dynamics within $H^{\rho,s}_0$ and bypassing frequency truncations that are incompatible with exponential topologies.

\textbf{2. Exponential Spectral Calculus and Geometric Rigidification.}
To extend the regularized trajectories within the small-data trapping region globally without radius collapse, we define an exponential spectral energy $\mathcal{E}_\rho(u) = || \langle \mathcal{L}_u/2 \rangle e^{\frac{\rho}{2}\mathcal{L}_u} u_+ ||_{L^2}^2$. The rigorous definition and strict conservation of this functional are established using the Borel functional calculus for the self-adjoint Lax operator $\mathcal{L}_u$ and the Stieltjes inversion of its invariant spectral measure.

To bridge the topological mismatch between the spectral exponential space and the physical phase space, we evaluate the Lax intertwining operator $W(\tau)$ across the complexified analytic strip $\tau \in [0, \rho/2]$. By solving the operator ODE $W'(\tau) = -Q(\tau)W(\tau)$ as an operator-valued path in $H^1_+$, and employing Volterra integral equations (Duhamel's formula), we completely avoid unbounded operator commutators. Evaluating this operator identity over the discrete frequency lattice yields exact topological trapping inequalities governed by convergent Riemann zeta-derived series. For initial data below a specific energetic threshold, this geometric mechanism provides a static algebraic trapping region, preventing the analytic radius from shrinking.

\subsection{Main Results}
The main result establishes global well-posedness and continuous data-to-solution dependence in the canonical analytic Sobolev space $H^{\rho, 1}_0$. The intertwining operator generates two absolute geometric constants $c_1, c_2 > 0$ derived from the Riemann zeta function (evaluated in Section 4). These constants determine a geometric bounding function $f(x) = \frac{1}{\sqrt{2}} e^{-c_1 x}(x - \frac{\sqrt{2}}{2}c_2 x^2)$, which attains a strict local maximum $A_{\max} > 0$ at $x_{\max}$. A second, smaller threshold is obtained from the local uniform convexity of the exponential spectral energy.

Let $f$ and $x_{\max}$ be as in \autoref{cor:regularized_trapping}.
Choose the strict-convexity radius $r_*$ in
\autoref{thm:spectral_strict_convexity} so that $2r_*<x_{\max}$, and set
\begin{equation} \label{eq:main_small_data}
    A_*:=f(r_*),\qquad\Omega_*:=\left\{v\in H^{\rho,1}_{0}:v \text{ real-valued, } \mathcal{E}_\rho(v)^{1/2}<A_*,\quad\|v_+\|_{\rho,1}<x_{\max}\right\}.
\end{equation}
For $v\in\Omega_*$, denote by $X(v)\in[0,r_*)$ the smaller root of
\[
f(X(v))=\mathcal{E}_\rho(v)^{1/2}.
\]

\begin{theorem}
\label{thm:main_theorem}
The Benjamin--Ono equation is globally well-posed on $\Omega_*$. More precisely, for every $u_0\in\Omega_*$ there exists a unique global solution such that, for each fixed $\epsilon\in(0,\rho/3)$,
\[
u\in C(\mathbb{R};H^{\rho,1}_0)\cap C^1(\mathbb{R};H^{\rho-3\epsilon,1}_0),\qquad u(0)=u_0,
\]
and
\[
\mathcal{E}_\rho(u(t))
=\mathcal{E}_\rho(u_0),
\qquad
\sup_{t\in\mathbb{R}}\|u_+(t)\|_{\rho,1}\le X(u_0),
\qquad
\sup_{t\in\mathbb{R}}\|u(t)\|_{\rho,1}
\le\sqrt{2}\,X(u_0).
\]
In particular, the analyticity width $\rho$ is preserved globally in time. Moreover, for every $T>0$, the solution map is continuous from $\Omega_*$ into $C([-T,T];H^{\rho,1}_0)$.
\end{theorem}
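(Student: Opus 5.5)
The proof proceeds in three stages: regularized global flows, a compactness passage to the limit $\kappa\to\infty$, and an upgrade from weak to strong convergence via the convexity of $\mathcal{E}_\rho$.

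\textbf{Regularized flows.} First, for each large $\kappa$, run the regularized $H_\kappa$ dynamics from $u_0\in\Omega_*$. By the gauge identity $\partial_x m=-\frac{i}{2}(\kappa m+C_+(u(m-1)))$ and the Banach algebra property of $H^{\rho,1}_0$, this vector field is locally Lipschitz, so Picard iteration gives a local solution $u^\kappa\in C^1$ in $H^{\rho,1}_0$. Because $H_\kappa$ Poisson-commutes with the Lax spectrum, $\mathcal{E}_\rho(u^\kappa(t))=\mathcal{E}_\rho(u_0)$. Then \autoref{cor:regularized_trapping} applies: $\|u^\kappa_+(t)\|_{\rho,1}$ solves $f(\|u^\kappa_+\|_{\rho,1})\le \mathcal{E}_\rho^{1/2}<A_*$. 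Since the map is continuous in $t$ and starts below $x_{\max}$, a continuity argument traps it on the left branch of $f$, giving $\|u^\kappa_+(t)\|_{\rho,1}\le X(u_0)<r_*$. Real-valuedness gives $\|u^\kappa\|_{\rho,1}\le\sqrt2 X(u_0)$. This uniform a priori bound extends each $u^\kappa$ globally in time.

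\textbf{Limit $\kappa\to\infty$.} Next, compare the $H_\kappa$ flows with the BO flow through the commuting-flows identity: the difference flow $e^{t(J\nabla H_{BO}-J\nabla H_\kappa)}$ is small in a weaker norm such as $H^{\rho-3\epsilon,1}_0$, with an error bounded by $C(X)\kappa^{-1}$ times a polynomial in $T$. A Cauchy argument in the lossy space $C([-T,T];H^{\rho-3\epsilon,1}_0)$ then yields a limit $u$. This limit solves BO in $C^1(\mathbb{R};H^{\rho-3\epsilon,1}_0)$, since $u\partial_xu$ and $\mathcal{H}\partial_{xx}u$ lose derivatives that the exponential weight gap absorbs. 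The uniform bound passes to the limit by weak lower semicontinuity, so $u(t)\in H^{\rho,1}_0$ with the stated bounds and $u$ is weakly continuous. Conservation of $\mathcal{E}_\rho$ along $u$ also passes to the limit: $\mathcal{E}_\rho$ is continuous in the weak topology restricted to bounded sets, via Stieltjes inversion and the convergence of resolvents $(\mathcal{L}_{u^\kappa}+\kappa')^{-1}$ in the weaker norm. Uniqueness comes from a Gronwall estimate in $H^{\rho-3\epsilon,1}_0$, using that both solutions are bounded in $H^{\rho,1}_0$.

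\textbf{Strong continuity, and the main obstacle.} The main obstacle is upgrading weak continuity in $H^{\rho,1}_0$ to strong continuity, and likewise for the data-to-solution map, without losing width. I would invoke \autoref{thm:spectral_strict_convexity} on the ball of radius $r_*$, which contains all trajectories since $X(u_0)<r_*$ and $2r_*<x_{\max}$ keeps midpoints inside the convexity region. Suppose $u(t_n)\rightharpoonup u(t)$ with $\mathcal{E}_\rho(u(t_n))=\mathcal{E}_\rho(u(t))$. Apply local uniform convexity to the midpoint $\frac12(u(t_n)+u(t))$, whose energy tends to at least $\mathcal{E}_\rho(u(t))$ by weak lower semicontinuity. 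A Radon--Riesz type argument then forces $\|u(t_n)-u(t)\|_{\rho,1}\to0$. The same argument handles $u_0^n\to u_0$ in $\Omega_*$. Uniqueness and weak compactness give $u^n(t_n)\rightharpoonup u(t)$ uniformly for $t_n\in[-T,T]$, and continuity of $\mathcal{E}_\rho$ in the strong topology gives $\mathcal{E}_\rho(u_0^n)\to\mathcal{E}_\rho(u_0)$. Convexity then yields strong convergence uniformly on $[-T,T]$. The delicate point I expect is verifying weak lower semicontinuity of $\mathcal{E}_\rho$ and the exact form of convexity needed. If direct weak semicontinuity fails, I would instead pass through the spectral measures: they converge vaguely, which gives semicontinuity of $\int\langle\lambda/2\rangle^2e^{\rho\lambda}\,d\mu$.
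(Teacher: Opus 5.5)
\textbf{Gap 1: energy conservation for the limit.} Your claim that $\mathcal{E}_\rho$ is continuous for the weak topology on bounded sets is false. Take $v_n=\delta\langle n\rangle^{-1}e^{-\rho n}\cos(nx)$. Then $v_n\rightharpoonup0$ in $H^{\rho,1}_0$ and $\|(v_n)_+\|_{\rho,1}=\delta/2$, so the lower bound in \eqref{eq:transcendental_static} gives $\mathcal{E}_\rho(v_n)^{1/2}\ge f(\delta/2)>0=\mathcal{E}_\rho(0)^{1/2}$. Your fallback (vague convergence of spectral measures) only gives lower semicontinuity, since mass can escape to $+\infty$ exactly as in this example. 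That yields only $\mathcal{E}_\rho(u(t))\le\mathcal{E}_\rho(u_0)$. This is not enough: energy equality is part of the statement, and your Radon--Riesz step needs $\mathcal{E}_\rho(u(t_n))=\mathcal{E}_\rho(u(t))$.

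The paper gets the reverse inequality as follows. It restarts the regularized construction at $u(t_0)$, which again satisfies the trapping hypotheses by the one-sided bound and the endpoint bound. It applies the one-sided bound to the restarted limit at time $-t_0$, and identifies that limit with $s\mapsto u(t_0+s)$ by uniqueness. Alternatively, your Stieltjes idea does work, but only through exact identity of the measures, not continuity of $\mathcal{E}_\rho$. Since $u^\kappa(t)\to u(t)$ in $H^{\rho-\epsilon,1}_0\subset L^\infty$, the resolvents converge in $\mathcal{B}(L^2_+)$ for large $\lambda$. Hence $\beta(\lambda;u(t))=\lim_\kappa\beta(\lambda;u^\kappa(t))=\beta(\lambda;u_0)$, so $\mu_{u(t)}=\mu_{u_0}$ and $\mathcal{E}_\rho(u(t))=\int e^{\rho\nu}(1+\nu^2/4)\,d\mu_{u_0}(\nu)=\mathcal{E}_\rho(u_0)$. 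The weak lower semicontinuity you call delicate is immediate from \eqref{eq:strong_convexity_inequality} by taking $\liminf$.

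\textbf{Gap 2: the passage $\kappa\to\infty$.} As you describe it, this step is circular. It invokes the difference flow $e^{t(J\nabla H_{BO}-J\nabla H_\kappa)}$ and a bound for it in $H^{\rho-3\epsilon,1}_0$. But no BO flow on analytic data with controlled width exists at that stage, since that is the theorem. Bounding the difference flow in an analytic norm would itself need uniform analytic control along it, and commutation of the flows is never justified. The paper needs none of this. For $w=u^\kappa-u^\nu$ it writes $\partial_tw=-\mathcal{H}\partial_{xx}w-\frac12\partial_x((u^\kappa+u^\nu)w)+\mathcal{R}_\kappa(u^\kappa)-\mathcal{R}_\nu(u^\nu)$. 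It closes a Gronwall estimate in $L^2$ using $\|\partial_xu^\kappa\|_{L^\infty}\lesssim\|u^\kappa\|_{\rho,1}$ and \autoref{prop:uniform_residual}. It then upgrades to $H^{\rho-\epsilon,1}_0$ by splitting frequencies at $J$ against the uniform $H^{\rho,1}_0$ trapping bound. Uniqueness uses the same $L^2$ estimate, which is simpler than your fixed-width analytic Gronwall.

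The rest of your outline matches the paper: trapping of the regularized flows, and the Kadec upgrade for time continuity and for the data-to-solution map. One small correction: the trajectories lie in $\overline B_{\sqrt2r_*}(H^{\rho,1}_0)$, not in the ball of radius $r_*$. This is harmless, since the convexity holds on $B_{2r_*}$.
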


\begin{remark}
While \autoref{thm:main_theorem} is formulated for the integer regularity $s=1$, the geometric rigidification framework extends to fractional indices $s > 1/2$. Fractional microlocal operations inherently introduce generic, operator-dependent constants that obscure explicit geometric estimates. However, because the exponential weight asymptotically dominates polynomial growth, for $u_0 \in H_0^{\rho, s}$ sufficiently small, it embeds continuously into $H_0^{\rho-\delta, 1}$ by conceding an arbitrarily small $\delta > 0$. Applying \autoref{thm:main_theorem} in this embedded space preserves the prescribed width $\rho-\delta$ globally, effectively bypassing fractional Leibniz complexities while preserving the exact algebraic precision of the integrability structure.
\end{remark}

\vspace{0.3cm}
\noindent \textbf{Organization of the paper.} In Section 2, we introduce the zero-mean periodic analytic Sobolev spaces and establish the necessary Banach algebra properties. Section 3 formalizes the unbounded Lax operator and utilizes its bounded resolvent to define the regularizing gauge transformation. In Section 4, we establish the exponential spectral calculus, the intertwining identity, and a local uniform convexity property of the spectral energy. Section 5 constructs the BO flow as a limit of commuting regularizations and obtains global well-posedness in the full $H^{\rho,1}_0$ topology.

\section{Notations and Preliminaries}

Throughout this paper, we consider spatial-periodic functions defined on the one-dimensional torus $\mathbb{T} = \mathbb{R}/2\pi\mathbb{Z}$. For a general periodic function $u(x)$, its Fourier series expansion is given by
$$u(x) = \sum_{n \in \mathbb{Z}} \hat{u}(n) e^{inx},$$
where $\hat{u}(n)$ are the Fourier coefficients. We use the normalized $L^2$ inner product
\begin{equation}
\langle f,g\rangle_{L^2(\mathbb{T})} := \frac{1}{2\pi}\int_{\mathbb{T}} f(x)\overline{g(x)}\,dx,
\end{equation}
so that Plancherel's identity holds without extraneous $2\pi$ factors. 

To capture the analytic regularity of solutions while preserving an $\ell^2$-based Hilbert space structure compatible with the exact spectral theory of the Lax operator, we conduct our analysis within the analytic Sobolev spaces. For a fixed analyticity radius $\rho > 0$ and a real smoothness index $s \ge 0$, we define the analytic Sobolev space $H^{\rho, s}$ as the completion of finite trigonometric polynomials under the exponentially weighted $\ell^2$-norm:
\begin{equation} \label{eq:Hs_norm}
||u||_{\rho, s}^2 = \sum_{n \in \mathbb{Z}} \langle n \rangle^{2s} |\hat{u}(n)|^2 e^{2\rho |n|},
\end{equation}
where $\langle n \rangle = (1+|n|^2)^{1/2}$. By the continuous embedding of Sobolev spaces, for $s > 1/2$, functions in $H^{\rho, s}$ admit bounded analytic continuations to the complex strip $\{ z \in \mathbb{C} : |\text{Im} z| < \rho \}$.

\begin{lemma}
\label{lemma2.1}
For any $s > 1/2$ and $\rho > 0$, the analytic Sobolev space $H^{\rho, s}$ is a Banach algebra. Specifically, there exists a constant $C_s > 0$, independent of $\rho$, such that for any $u, v \in H^{\rho, s}$, we have $uv \in H^{\rho, s}$ and:
\begin{equation}
||uv||_{\rho, s} \le C_s ||u||_{\rho, s} ||v||_{\rho, s}.
\end{equation}
\end{lemma}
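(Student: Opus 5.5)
The plan is to reduce everything to a weighted convolution inequality on the Fourier side. Since $\widehat{uv}(n)=\sum_{k}\hat u(k)\hat v(n-k)$, the goal is a bound on
\[
\langle n\rangle^{s}e^{\rho|n|}|\widehat{uv}(n)| \le \sum_k \langle n\rangle^{s}e^{\rho|n|}|\hat u(k)||\hat v(n-k)|.
\]

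First, the exponential weight is handled by the triangle inequality $|n|\le|k|+|n-k|$. This gives $e^{\rho|n|}\le e^{\rho|k|}e^{\rho|n-k|}$. So if we set $a(k)=e^{\rho|k|}|\hat u(k)|$ and $b(k)=e^{\rho|k|}|\hat v(k)|$, we get $e^{\rho|n|}|\widehat{uv}(n)|\le (a*b)(n)$. Moreover $\|u\|_{\rho,s}=\|\langle\cdot\rangle^s a\|_{\ell^2}$ and likewise for $v$. The exponent $\rho$ disappears completely, which is why $C_s$ will not depend on $\rho$.

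Second, it remains to prove the standard $H^s$ algebra estimate for nonnegative sequences:
\[
\|\langle\cdot\rangle^s (a*b)\|_{\ell^2}\le C_s\|\langle\cdot\rangle^s a\|_{\ell^2}\|\langle\cdot\rangle^s b\|_{\ell^2}, \qquad s>1/2.
\]
Since $\langle n\rangle\le\langle k\rangle+\langle n-k\rangle\le 2\max(\langle k\rangle,\langle n-k\rangle)$, we have
\[
\langle n\rangle^s\le 2^s\bigl(\langle k\rangle^s+\langle n-k\rangle^s\bigr).
\]
This splits the sum into two terms. The first is bounded by $2^s\big((\langle\cdot\rangle^s a)*b\big)(n)$ and the second by the symmetric expression. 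By Young's inequality $\ell^2*\ell^1\to\ell^2$, each term is controlled by $\|\langle\cdot\rangle^s a\|_{\ell^2}\|b\|_{\ell^1}$, respectively $\|a\|_{\ell^1}\|\langle\cdot\rangle^s b\|_{\ell^2}$.

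Third, we close the estimate with Cauchy--Schwarz:
\[
\|b\|_{\ell^1}\le\Big(\sum_k\langle k\rangle^{-2s}\Big)^{1/2}\|\langle\cdot\rangle^s b\|_{\ell^2},
\]
and the sum is finite exactly when $s>1/2$. This gives $C_s=2^{s+1}\big(\sum_k\langle k\rangle^{-2s}\big)^{1/2}$, which depends only on $s$.

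Finally, we justify the computation. It is valid on trigonometric polynomials, where all sums are finite. Because both sides are continuous and $H^{\rho,s}$ is defined as a completion, the bound extends by density. The absolute $\ell^1$ convergence of the Fourier series also shows that the product of the limits agrees with the limit of the products.

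The argument has no genuine obstacle. The only point needing care is the first step: checking that the exponential weight is submultiplicative, so that its constant is exactly 1 and the $\rho$-independence of $C_s$ is clear.
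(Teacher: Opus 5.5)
Your proposal is correct and follows essentially the paper's argument: the Peetre-type split of $\langle n\rangle^s$, the submultiplicativity $e^{\rho|n|}\le e^{\rho|k|}e^{\rho|n-k|}$ (which the paper uses without stating it), and the square-summability of $\langle k\rangle^{-s}$ for $s>1/2$, leading to the same constant $C_s=2^{s+1}\bigl(\sum_k\langle k\rangle^{-2s}\bigr)^{1/2}$. The only difference is cosmetic: you bound the convolution with Young's inequality $\ell^2*\ell^1\to\ell^2$ followed by Cauchy--Schwarz on the $\ell^1$ norm, whereas the paper applies a weighted Cauchy--Schwarz inside the sum and then uses Tonelli.
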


\begin{proof}
By Peetre's inequality, we have $\langle n \rangle^s \le 2^{s} (\langle n-m \rangle^s + \langle m \rangle^s)$. Thus, the Fourier coefficients of the product $uv$ satisfy:
\begin{align*}
\langle n \rangle^s |\widehat{uv}(n)| e^{\rho|n|} \le 2^{s} \sum_{m \in \mathbb{Z}} \Big( &\langle n-m \rangle^s |\hat{u}(n-m)| e^{\rho|n-m|} \cdot |\hat{v}(m)| e^{\rho|m|} \\
&+ |\hat{u}(n-m)| e^{\rho|n-m|} \cdot \langle m \rangle^s |\hat{v}(m)| e^{\rho|m|} \Big).
\end{align*}
Because $s > 1/2$, the sequence $\langle k \rangle^{-s} \in \ell^2(\mathbb{Z})$. Applying the Cauchy--Schwarz inequality over the discrete lattice to the first summation yields:
\begin{align*}
&\sum_{m \in \mathbb{Z}}\langle n-m \rangle^s |\hat{u}(n-m)| e^{\rho|n-m|}|\hat{v}(m)| e^{\rho|m|} \\
&\le \left(\sum_{m \in \mathbb{Z}}\langle n-m \rangle^{2s}\langle m\rangle^{2s} |\hat{u}(n-m)|^2 e^{2\rho|n-m|}|\hat{v}(m)|^2 e^{2\rho|m|}\right)^{\frac{1}{2}}\left(\sum_{m \in \mathbb{Z}}\langle m\rangle^{-2s} \right)^{\frac{1}{2}}.
\end{align*}
Squaring this expression and summing over $n \in \mathbb{Z}$, we apply Tonelli's theorem to interchange the order of summation. This separates the respective $\ell^2$-norms:
$$ \sum_{n \in \mathbb{Z}} \sum_{m \in \mathbb{Z}} \langle n-m \rangle^{2s} |\hat{u}(n-m)|^2 e^{2\rho|n-m|} \cdot \langle m\rangle^{2s} |\hat{v}(m)|^2 e^{2\rho|m|} = ||u||_{\rho, s}^2 ||v||_{\rho, s}^2. $$
Applying a symmetric procedure to the second term, we conclude $||uv||_{\rho, s} \le C_s ||u||_{\rho, s} ||v||_{\rho, s}$, one may take $C_s = 2^{s+1} (\sum_{k \in \mathbb{Z}} \langle k \rangle^{-2s})^{1/2}$.
\end{proof}

To utilize the completely integrable structure and explicitly address the zero-mode, we restrict our phase space to the zero-mean closed subspace:
\begin{equation} \label{eq:Hs_norm_zero}
H_0^{\rho, s} = \big\{ u \in H^{\rho, s} : \hat{u}(0) = 0 \big\}.
\end{equation}
While the product of two zero-mean functions generally possesses a non-zero spatial mean, the nonlinear vector field of the Benjamin--Ono equation appears as a pure spatial derivative $\partial_x(u^2/2)$. Since spatial differentiation annihilates the zero-mode, the entire vector field maps back into the zero-mean subspace. Consequently, the condition $\hat{u}(0, t) \equiv 0$ is dynamically invariant under the flow.

We define the frequency projections $C_+$ and $C_-$ as:
$$C_+ u(x) = \sum_{n \ge 0} \hat{u}(n) e^{inx}, \quad C_- u(x) = \sum_{n < 0} \hat{u}(n) e^{inx}.$$
We denote $u_+ = C_+ u$ and $u_- = C_- u$. The positive-frequency analytic Sobolev space is defined correspondingly as $H_+^{\rho,s}:=C_+H^{\rho,s}=\left\{f\in H^{\rho,s}:\hat f(n)=0\ \text{for }n<0\right\}$. The periodic Hilbert transform $\mathcal{H}$, defined via the frequency multiplier $-i \operatorname{sgn}(n)$, satisfies the algebraic relation $\mathcal{H}u = -i(u_+ - u_-)$ on $H_0^{\rho, s}$. Furthermore, $C_{\pm}$ are orthogonal projections and are bounded with operator norm $1$ on $H^{\rho, s}$.

Following the integrability structure of the BO equation, we introduce the free Lax operator $\mathcal{L}_0 = -2i\partial_x$ acting on the positive-frequency space $H_+^{\rho, s}$. For $f \in H^{\rho, s}_+$, its action in the Fourier domain corresponds to real multiplication:
\begin{equation}
\widehat{\mathcal{L}_0 f}(n) = 2n \hat{f}(n), \quad n \ge 0.
\end{equation}
The operator $\mathcal L_0$ is nonnegative and
self-adjoint on $H_+^{\rho,s}$, with $\ker \mathcal L_0=\operatorname{span}\{1\}$. Furthermore, because $\partial_x$ is a skew-adjoint operator under the complex $L^2$ inner product, the multiplier $-2i$ renders $\mathcal{L}_0$ self-adjoint. For a positive spectral parameter $\kappa > 0$, the free resolvent is defined as $R_0(\kappa) = (\mathcal{L}_0 + \kappa)^{-1}$.

\begin{lemma}
\label{lemma2.2}
  Let $\rho > 0$, $s > 1/2$, and $\kappa > 0$. For any $u \in H^{\rho, s}$ and $f \in H_+^{\rho, s}$, the free resolvent $R_0(\kappa)$ satisfies the estimate:
  \begin{equation}
  || R_0(\kappa) C_+ \big( u f \big) ||_{\rho, s} \le \frac{C_s}{\kappa} ||u||_{\rho, s} ||f||_{\rho, s}.
  \end{equation}
\end{lemma}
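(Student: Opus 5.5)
The estimate splits into three elementary facts: the free resolvent is a bounded Fourier multiplier, the Szeg\H{o} projection $C_+$ is a contraction, and the product obeys the algebra bound of \autoref{lemma2.1}.

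\textbf{Step 1: the resolvent.} For $g\in H^{\rho,s}_+$, the Fourier coefficients of $R_0(\kappa)g$ are
\[
\widehat{R_0(\kappa)g}(n)=\frac{\hat g(n)}{2n+\kappa},\qquad n\ge0 .
\]
Since $2n+\kappa\ge\kappa>0$ for every $n\ge0$, each coefficient satisfies $|\widehat{R_0(\kappa)g}(n)|\le \kappa^{-1}|\hat g(n)|$. The weight $\langle n\rangle^{2s}e^{2\rho|n|}$ in \eqref{eq:Hs_norm} does not interact with this multiplier. Summing therefore gives
\[
\|R_0(\kappa)g\|_{\rho,s}\le\kappa^{-1}\|g\|_{\rho,s}.
\]
Because $\kappa>0$ and $\mathcal L_0\ge0$, the operator $\mathcal L_0+\kappa$ is invertible on $H_+^{\rho,s}$, so $R_0(\kappa)$ is well defined there.

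\textbf{Step 2: the projection and the product.} We apply Step 1 with $g=C_+(uf)$. By \autoref{lemma2.1}, $uf\in H^{\rho,s}$ and $\|uf\|_{\rho,s}\le C_s\|u\|_{\rho,s}\|f\|_{\rho,s}$. Since $C_+$ simply discards the negative Fourier modes, $\|C_+(uf)\|_{\rho,s}\le\|uf\|_{\rho,s}$, and $C_+(uf)\in H^{\rho,s}_+$.

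\textbf{Step 3: combining.} Chaining Steps 1 and 2 yields
\[
\|R_0(\kappa)C_+(uf)\|_{\rho,s}\le\kappa^{-1}\|C_+(uf)\|_{\rho,s}\le\frac{C_s}{\kappa}\|u\|_{\rho,s}\|f\|_{\rho,s},
\]
which is the claimed estimate, with the same constant $C_s$ as in \autoref{lemma2.1}.

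There is no real obstacle here. The only point to check is that the multiplier bound $(2n+\kappa)^{-1}\le\kappa^{-1}$ is applied only on nonnegative frequencies. This is guaranteed because $C_+$ acts first, so the resolvent never sees a negative frequency where $2n+\kappa$ could vanish.
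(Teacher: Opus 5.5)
Your proof is correct and follows the paper's argument: the multiplier bound $(2n+\kappa)^{-1}\le\kappa^{-1}$ on nonnegative frequencies gives $\|R_0(\kappa)g\|_{\rho,s}\le\kappa^{-1}\|g\|_{\rho,s}$. The contraction property of $C_+$ and the algebra estimate of \autoref{lemma2.1} then finish it, with the same constant $C_s$.
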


\begin{proof}
Because this multiplier commutes with the Sobolev and exponential weights, the action of $R_0(\kappa)$ satisfies:
$$||R_0(\kappa) g||_{\rho, s}^2 = \sum_{n \ge 0} \frac{\langle n \rangle^{2s}}{(2n + \kappa)^2} |\hat{g}(n)|^2 e^{2\rho n} \le \frac{1}{\kappa^2} \sum_{n \ge 0} \langle n \rangle^{2s} |\hat{g}(n)|^2 e^{2\rho n} = \frac{1}{\kappa^2} ||g||_{\rho, s}^2.$$
Taking the square root yields $||R_0(\kappa) g||_{\rho, s} \le \frac{1}{\kappa} ||g||_{\rho, s}$. Setting $g = C_+(uf)$ and applying \autoref{lemma2.1}, we obtain:
$$|| R_0(\kappa) C_+ ( u f ) ||_{\rho, s} \le \frac{1}{\kappa} ||C_+ (uf)||_{\rho, s} \le \frac{1}{\kappa} ||u f||_{\rho, s} \le \frac{C_s}{\kappa} ||u||_{\rho, s} ||f||_{\rho, s}.$$
This completes the proof.
\end{proof}

A characteristic challenge in analytic spaces is that spatial differentiation acts as an unbounded operator on any fixed $H^{\rho, s}$ topology, which structurally reflects the well-known phenomenon of derivative loss. However, this unboundedness can be quantified and managed by conceding a static loss in the analyticity radius. We formalize this mechanism via the classical Cauchy estimates for analytic Sobolev spaces.

\begin{lemma}
\label{lemma2.3}
For any static loss $\epsilon \in (0, \rho)$, any $s\in\mathbb{R}$, and $f \in H^{\rho, s}$, the spatial derivative maps boundedly into the reduced space $H_0^{\rho-\epsilon, s}$ and satisfies the quantitative estimate:
\begin{equation} 
\label{eq:cauchy_est}
||\partial_x f||_{\rho-\epsilon, s} \le \frac{1}{\epsilon e} ||f||_{\rho, s}.
\end{equation}
\end{lemma}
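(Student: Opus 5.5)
The estimate reduces to a scalar bound on the Fourier side, since $\partial_x$ is the Fourier multiplier $in$ and the weights $\langle n\rangle^{2s}$ and $e^{2\rho|n|}$ are diagonal. For $f\in H^{\rho,s}$ we have $\widehat{\partial_x f}(n)=in\hat f(n)$, so
\[
\|\partial_x f\|_{\rho-\epsilon,s}^2=\sum_{n\in\mathbb{Z}}\langle n\rangle^{2s}|n|^2e^{-2\epsilon|n|}\,|\hat f(n)|^2e^{2\rho|n|}
\le \Big(\sup_{n\in\mathbb{Z}}|n|e^{-\epsilon|n|}\Big)^2\|f\|_{\rho,s}^2 .
\]
Because $|n|^2$ and $e^{-2\epsilon|n|}$ factor out cleanly, no condition on $s$ is needed, which matches the statement's "any $s\in\mathbb{R}$."

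The key step is the elementary supremum. Set $g(x)=xe^{-\epsilon x}$ for $x\ge0$. Then $g'(x)=(1-\epsilon x)e^{-\epsilon x}$, so $g$ is maximized at $x=1/\epsilon$ with value $g(1/\epsilon)=\frac{1}{\epsilon e}$. The supremum over the integers $|n|$ is at most the supremum over $[0,\infty)$, so $|n|e^{-\epsilon|n|}\le \frac{1}{\epsilon e}$ for every $n$. Taking square roots gives \eqref{eq:cauchy_est}.

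Two remaining points need checking. First, $\partial_x f$ has zero mean, because its $n=0$ coefficient is $0\cdot\hat f(0)=0$, so $\partial_x f$ lies in the zero-mean subspace $H_0^{\rho-\epsilon,s}$. Second, the argument must be valid for general elements of the completion, not only for trigonometric polynomials. I would handle this by proving the bound for trigonometric polynomials first. The operator then extends by density to a bounded map $H^{\rho,s}\to H_0^{\rho-\epsilon,s}$ that agrees with the distributional derivative. Alternatively, one can work coefficientwise, since convergence in $H^{\rho,s}$ implies coefficientwise convergence.

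I expect no real obstacle here. The only delicate point is stating that $\epsilon<\rho$ keeps the weight $e^{(\rho-\epsilon)|n|}$ an analytic one, so the target is a genuine analytic Sobolev space. This condition is not used in the inequality itself.
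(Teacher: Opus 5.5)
Your proposal is correct and follows essentially the same route as the paper: write $\partial_x$ as the multiplier $in$, split $e^{2(\rho-\epsilon)|n|}=e^{2\rho|n|}e^{-2\epsilon|n|}$, and factor out $\sup_{x>0}x^2e^{-2\epsilon x}=\frac{1}{\epsilon^2e^2}$ (equivalently your $\sup xe^{-\epsilon x}=\frac{1}{\epsilon e}$). Your extra remarks on the vanishing zero mode and on extending from trigonometric polynomials by density are sound refinements that the paper leaves implicit.
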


\begin{proof}
  In the frequency domain, the spatial derivative $\partial_x$ corresponds exactly to the multiplier $in$. We directly evaluate the $H^{\rho-\epsilon, s}$ norm squared:
  \begin{align*}
  ||\partial_x f||_{\rho-\epsilon, s}^2 &= \sum_{n \ne 0} |n|^2 \langle n \rangle^{2s} |\hat{f}(n)|^2 e^{2(\rho-\epsilon)|n|} \\
  &= \sum_{n \ne 0} \Big( \langle n \rangle^{2s} |\hat{f}(n)|^2 e^{2\rho|n|} \Big) \Big( |n|^2 e^{-2\epsilon|n|} \Big).
  \end{align*}
  Elementary calculus shows that $\sup_{x > 0} x^2 e^{-2\epsilon x} = \frac{1}{\epsilon^2 e^2}$. Factoring this supremum out of the infinite summation gives:
  $$||\partial_x f||_{\rho-\epsilon, s}^2 \le \frac{1}{\epsilon^2 e^2} \sum_{n \ne 0} \langle n \rangle^{2s} |\hat{f}(n)|^2 e^{2\rho|n|} \le \frac{1}{\epsilon^2 e^2} ||f||_{\rho, s}^2.$$
  Taking the square root yields the desired estimate.
\end{proof}

\section{The Full Lax Operator and the Resolvent Gauge}

In the study of the completely integrable Benjamin-Ono hierarchy, the dynamics are governed by the Lax operator. To construct the regularized commuting flows in Section 5 and justify the geometric rigidification in Section 4, we first establish the functional-analytic properties of the spatial Lax operator.

For a real-valued, zero-mean potential $u \in H_0^{\rho, s}$ with $s \ge 1$, the full Lax operator $\mathcal{L}_u$ acting on the Hardy space of nonnegative frequencies $L^2_+(\mathbb{T})$ is defined as a perturbation of the free operator $\mathcal{L}_0$ by a Toeplitz multiplication operator:
\begin{equation} 
\label{eq:Lax_full}
\mathcal{L}_u f = \mathcal{L}_0 f + \mathcal{T}_u f = -2i\partial_x f + C_+ (u f).
\end{equation}
The free operator $\mathcal{L}_0 = -2i\partial_x$ is nonnegative and self-adjoint on $L^2_+(\mathbb{T})$ with dense domain $\mathcal{D}(\mathcal{L}_0) = H^1_+(\mathbb{T})$. Since the analytic Sobolev space $H_0^{\rho, s}$ embeds continuously into $L^\infty(\mathbb{T})$ for $s \ge 1$, the potential $u$ acts as a bounded multiplier. Consequently, the Toeplitz perturbation $\mathcal{T}_u f = C_+(uf)$ defines a bounded, symmetric linear operator on $L^2_+(\mathbb{T})$. 

By the Kato--Rellich theorem, $\mathcal{L}_u = \mathcal{L}_0 + \mathcal{T}_u$ is a closed, self-adjoint operator on $L^2_+(\mathbb{T})$ bounded from below, with domain $\mathcal{D}(\mathcal{L}_u) = H^1_+(\mathbb{T})$. This self-adjointness permits the application of the Spectral Theorem in Section 4. 

Furthermore, we will consider the restriction of $\mathcal{L}_u$ to the analytic space $H_+^{\rho, s}$. In this context, $\mathcal{L}_0$ acts as a closed, unbounded operator on $H_+^{\rho, s}$ with dense domain $\mathcal{D}_s(\mathcal{L}_0) = H_+^{\rho, s+1}$. By \autoref{lemma2.1}, the perturbation $\mathcal{T}_u$ remains a bounded linear operator on $H^{\rho, s}_+$. Thus, $\mathcal{L}_u$ acts as a closed, unbounded operator on $H_+^{\rho, s}$ with domain $\mathcal{D}_s(\mathcal{L}_u) = H_+^{\rho, s+1}$.

The auxiliary operator $\mathcal{P}_u$, densely defined on $\mathcal{D}(\mathcal{L}_0^2) = H^2_+(\mathbb{T})$ by
\begin{equation*}
  \mathcal{P}_u f = i\partial_x^2f + (C_+ \partial_xu)(C_+ f) - C_+(\partial_xu C_+ f) - C_+(u C_+ \partial_xf),
\end{equation*}
is skew-adjoint. While the integrability literature (e.g., Wu \cite{wu2016simplicity} and G\'erard \cite{gerard2021integrability}) formally identifies the isospectral evolution via the commutator equation $\partial_t \mathcal{L}_u = [\mathcal{P}_u, \mathcal{L}_u]$, we establish the associated spectral-measure conservation through resolvent generating functions and uniqueness of the Stieltjes transform, avoiding unbounded commutator manipulations.

The construction of the regularized commuting flows relies on the resolvent of the Lax operator. We first establish the existence and analytic bounds for the resolvent $R(\kappa, u) := (\mathcal{L}_u + \kappa)^{-1}$.

\begin{proposition}
\label{proposition3.1}
  Suppose $u \in H_0^{\rho, s}$ with $s \ge 1$. For any real $\kappa > 100 C_s ||u||_{\rho, s}$, $-\kappa$ belongs to the resolvent set of $\mathcal{L}_u$ acting on $H^{\rho, s}_+$. The resolvent $R(\kappa, u) := (\mathcal{L}_u + \kappa)^{-1}$ defines a bounded linear operator on $H^{\rho, s}_+$, mapping $H^{\rho, s}_+$ bijectively onto the domain $\mathcal{D}_s(\mathcal{L}_u) = H^{\rho, s+1}_+$. It is given by the convergent Neumann series in $\mathcal{B}(H^{\rho, s}_+)$:
\begin{equation} \label{eq:neumann_series}
R(\kappa, u) = R_0(\kappa) \sum_{j=0}^\infty (-1)^j \big[ \mathcal{T}_u R_0(\kappa) \big]^j,
\end{equation}
and satisfies the operator norm bound:
\begin{equation} \label{eq:resolvent_bound}
||R(\kappa, u)||_{\mathcal{B}(H^{\rho, s}_+)} \le \frac{1}{\kappa - C_s ||u||_{\rho, s}}.
\end{equation}
\end{proposition}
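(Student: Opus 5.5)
The plan is to factor $\mathcal{L}_u+\kappa$ through the free operator and invert each factor separately. On the domain $\mathcal{D}_s(\mathcal{L}_u)=H^{\rho,s+1}_+$ we have
\[
\mathcal{L}_u+\kappa=\mathcal{L}_0+\kappa+\mathcal{T}_u=\big(I+\mathcal{T}_uR_0(\kappa)\big)(\mathcal{L}_0+\kappa).
\]
So it suffices to show two things: that $\mathcal{L}_0+\kappa$ is a bijection $H^{\rho,s+1}_+\to H^{\rho,s}_+$ with inverse $R_0(\kappa)$, and that $I+\mathcal{T}_uR_0(\kappa)$ is invertible on $H^{\rho,s}_+$ via a Neumann series. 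Composing the two inverses then gives exactly \eqref{eq:neumann_series}.

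\textbf{Step 1: the free factor.} The operator $\mathcal{L}_0+\kappa$ is the Fourier multiplier $2n+\kappa$ for $n\ge 0$, and $\kappa>0$. Since
\[
\kappa\,\langle n\rangle\lesssim 2n+\kappa\lesssim \max(1,\kappa)\,\langle n\rangle ,
\]
the multiplier $(2n+\kappa)^{-1}$ maps $H^{\rho,s}_+$ bijectively onto $H^{\rho,s+1}_+$. Both weights $\langle n\rangle^{2s}$ and $e^{2\rho n}$ are untouched by the multiplier. In addition, the computation in \autoref{lemma2.2} gives $\|R_0(\kappa)\|_{\mathcal{B}(H^{\rho,s}_+)}\le 1/\kappa$.

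\textbf{Step 2: the perturbation factor.} By \autoref{lemma2.1} and the fact that $C_+$ has norm $1$, we have $\|\mathcal{T}_u g\|_{\rho,s}\le C_s\|u\|_{\rho,s}\|g\|_{\rho,s}$. Combined with Step 1 this gives
\[
q:=\|\mathcal{T}_uR_0(\kappa)\|_{\mathcal{B}(H^{\rho,s}_+)}\le \frac{C_s\|u\|_{\rho,s}}{\kappa}<\frac1{100}.
\]
Hence $\sum_j(-1)^j[\mathcal{T}_uR_0(\kappa)]^j$ converges absolutely in $\mathcal{B}(H^{\rho,s}_+)$ to $(I+\mathcal{T}_uR_0(\kappa))^{-1}$, with norm at most $(1-q)^{-1}$.

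\textbf{Step 3: two-sided inverse and the bound.} Set $R:=R_0(\kappa)(I+\mathcal{T}_uR_0(\kappa))^{-1}$. This operator maps $H^{\rho,s}_+$ into $H^{\rho,s+1}_+$. The factorization gives $(\mathcal{L}_u+\kappa)R=I$ on $H^{\rho,s}_+$ and $R(\mathcal{L}_u+\kappa)=I$ on $H^{\rho,s+1}_+$. Therefore $-\kappa$ lies in the resolvent set, and $R=R(\kappa,u)$ is a bijection onto $\mathcal{D}_s(\mathcal{L}_u)$. The norm bound follows from
\[
\|R(\kappa,u)\|\le \frac1\kappa\cdot\frac{1}{1-C_s\|u\|_{\rho,s}/\kappa}=\frac{1}{\kappa-C_s\|u\|_{\rho,s}},
\]
which is \eqref{eq:resolvent_bound}.

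\textbf{Main obstacle.} The only delicate point is the domain bookkeeping in Step 3. One must check that the factorization is legitimate on $H^{\rho,s+1}_+$; this holds because $\mathcal{T}_u$ is bounded on $H^{\rho,s}_+$. One must also check that $R$ is a genuine two-sided inverse of the closed unbounded operator, not merely a right inverse. Both come down to the bijectivity of $R_0(\kappa)$ onto $H^{\rho,s+1}_+$ from Step 1.
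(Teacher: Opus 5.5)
Your proposal is correct and follows the paper's proof essentially verbatim: the factorization $\mathcal{L}_u+\kappa=(I+\mathcal{T}_uR_0(\kappa))(\mathcal{L}_0+\kappa)$, the Neumann series justified by $\|\mathcal{T}_uR_0(\kappa)\|\le C_s\|u\|_{\rho,s}/\kappa<1$, and the bound $\frac1\kappa\cdot(1-C_s\|u\|_{\rho,s}/\kappa)^{-1}$. Your explicit check of the left inverse on $H^{\rho,s+1}_+$ is slightly more careful than the paper's write-up. One cosmetic slip in Step 1: the natural lower bound is $\min(2,\kappa)\langle n\rangle\le 2n+\kappa$, whereas $\kappa\langle n\rangle\lesssim 2n+\kappa$ needs a $\kappa$-dependent constant; this is harmless because $\kappa$ is fixed.
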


\begin{proof}
To construct the bounded inverse of $\mathcal{L}_u + \kappa$, we factor it using the free resolvent $R_0(\kappa) = (\mathcal{L}_0 + \kappa)^{-1}$, which is a bounded bijection from $H^{\rho, s}_+$ onto $H^{\rho, s+1}_+$. For any $f \in \mathcal{D}_s(\mathcal{L}_u) = H^{\rho, s+1}_+$, we have:
$$(\mathcal{L}_u + \kappa) f = (\mathcal{L}_0 + \kappa) f + \mathcal{T}_u f = \Big[ I + \mathcal{T}_u R_0(\kappa) \Big] (\mathcal{L}_0 + \kappa) f.$$
To evaluate $I + \mathcal{T}_u R_0(\kappa)$, we apply \autoref{lemma2.1}. For any $g \in H^{\rho, s}_+$, we have:
$$|| \mathcal{T}_u R_0(\kappa) g ||_{\rho, s} = || C_+\big(u R_0(\kappa) g\big) ||_{\rho, s} \le C_s ||u||_{\rho, s} ||R_0(\kappa) g||_{\rho, s} \le \frac{C_s}{\kappa} ||u||_{\rho, s} ||g||_{\rho, s}.$$
The condition $\kappa > 100 C_s ||u||_{\rho, s}$ implies $|| \mathcal{T}_u R_0(\kappa) ||_{\mathcal{B}(H^{\rho, s}_+)} \le \frac{C_s ||u||_{\rho, s}}{\kappa} < 1$. By the Neumann series theorem, $I + \mathcal{T}_u R_0(\kappa)$ is invertible on $H^{\rho, s}_+$. 

The resolvent is thus given by $R(\kappa, u) = R_0(\kappa) [ I + \mathcal{T}_u R_0(\kappa) ]^{-1}$. Since $R_0(\kappa)$ acts as a bounded operator from $H^{\rho, s}_+$ to $H^{\rho, s+1}_+$, the resolvent takes values in $\mathcal{D}_s(\mathcal{L}_u)$. On the other hand, $\|R_0(\kappa)\|_{\mathcal{B}(H^{\rho,s}_+)}\le\frac{1}{\kappa}$, the composition maps into $H^{\rho, s+1}_+$ and satisfies the bound:
$$||R(\kappa, u)||_{\mathcal{B}(H^{\rho, s}_+)} \le ||R_0(\kappa)||_{\mathcal{B}(H^{\rho, s}_+)} \Big|\Big| [ I + \mathcal{T}_u R_0(\kappa) ]^{-1} \Big|\Big|_{\mathcal{B}(H^{\rho, s}_+)} \le \frac{1}{\kappa} \cdot \frac{1}{1 - \frac{C_s ||u||_{\rho, s}}{\kappa}} = \frac{1}{\kappa - C_s ||u||_{\rho, s}}.$$
\end{proof}

We introduce the resolvent gauge variable. While global a priori estimates are derived via the spectral calculus in Section 4, the local Picard iteration for the commuting flows encounters spatial derivative losses. To address this obstruction, we map the potential $u$ to a gauge variable $m$ evaluated at a large spectral parameter $\kappa$, which serves as an algebraic regularizer.

\begin{definition}
\label{definition3.1}
  Let $M > 0$ be a phase-space bound, and let $\kappa$ be a fixed spectral parameter satisfying $\kappa > 100 C_s M$. For any $u \in H_0^{\rho, s}$ with $||u||_{\rho, s} \le M$, we define the gauge variable $m(\kappa, u)$ via the resolvent acting on the positive frequency projection:
  \begin{equation} \label{eq:gauge_def}
  m(\kappa, u) = R(\kappa, u) u_+ = (\mathcal{L}_u + \kappa)^{-1} u_+.
  \end{equation}
\end{definition}

We conclude this section with an analytic bound and an algebraic identity for the gauge variable. This identity demonstrates how the gauge transformation isolates the spatial derivative, enabling its evaluation through bounded multilinear operations in $H^{\rho, s}_0$.

\begin{corollary}
\label{corollary3.1}
Under the assumptions of \autoref{definition3.1}, the gauge variable $m(\kappa, u)$ is unique, belongs to the domain $\mathcal{D}_s(\mathcal{L}_u) = H_+^{\rho, s+1}$ and satisfies the bound:
\begin{equation} \label{eq:m_bound}
||m(\kappa, u)||_{\rho, s} \le \frac{||u||_{\rho, s}}{\kappa - C_s ||u||_{\rho, s}}.
\end{equation}
Moreover, $m(\kappa, u)$ satisfies the equation $(\mathcal{L}_u + \kappa)m = u_+$ in $H^{\rho, s}_+$, which expands to:
\begin{equation} \label{eq:m_eq}
-2i\partial_x m + C_+ \big( u (m - 1) \big) + \kappa m = 0.
\end{equation}  
Consequently, the spatial derivative of the gauge variable can be evaluated through bounded multilinear operations mapping into $H^{\rho, s}_0$:
\begin{equation} \label{eq:m_derivative_algebraic}
\partial_x m = -\frac{i}{2} \Big( \kappa m + C_+ \big( u(m - 1) \big) \Big).
\end{equation}
\end{corollary}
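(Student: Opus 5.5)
The plan is to read everything off \autoref{proposition3.1} applied to $f=u_+$. Since $\|u\|_{\rho,s}\le M$ and $\kappa>100C_sM\ge 100C_s\|u\|_{\rho,s}$, the proposition applies. It shows that $\mathcal{L}_u+\kappa$ is a bijection from $\mathcal{D}_s(\mathcal{L}_u)=H^{\rho,s+1}_+$ onto $H^{\rho,s}_+$, with bounded inverse $R(\kappa,u)$. Moreover $u_+=C_+u\in H^{\rho,s}_+$ with $\|u_+\|_{\rho,s}\le\|u\|_{\rho,s}$, because $C_+$ has norm $1$ on $H^{\rho,s}$. Hence $m=R(\kappa,u)u_+$ is well defined and lies in $H^{\rho,s+1}_+$.

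\textbf{Uniqueness.} Any $\tilde m\in\mathcal{D}_s(\mathcal{L}_u)$ solving $(\mathcal{L}_u+\kappa)\tilde m=u_+$ satisfies $(\mathcal{L}_u+\kappa)(\tilde m-m)=0$. Injectivity of $\mathcal{L}_u+\kappa$ on its domain then forces $\tilde m=m$.

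\textbf{Bound.} From \eqref{eq:resolvent_bound},
\[
\|m\|_{\rho,s}\le \frac{\|u_+\|_{\rho,s}}{\kappa-C_s\|u\|_{\rho,s}}\le\frac{\|u\|_{\rho,s}}{\kappa-C_s\|u\|_{\rho,s}},
\]
which is \eqref{eq:m_bound}. The denominator is positive because $\kappa>100C_s\|u\|_{\rho,s}$.

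\textbf{Equation.} By construction $(\mathcal{L}_u+\kappa)m=u_+$ in $H^{\rho,s}_+$. Expanding with \eqref{eq:Lax_full} gives
\[
-2i\partial_x m+C_+(um)+\kappa m=C_+u.
\]
Linearity of $C_+$ lets us write $C_+(um)-C_+u=C_+(u(m-1))$, and this yields \eqref{eq:m_eq}. All terms make sense in $H^{\rho,s}_+$:
\begin{itemize}
\item $\partial_x m\in H^{\rho,s}$, since $m\in H^{\rho,s+1}$;
\item $um\in H^{\rho,s}$, by \autoref{lemma2.1}.
\end{itemize}
One subtlety is that the constant $1$ has Fourier support only at the zero mode. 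So $u\cdot 1=u\in H^{\rho,s}$ trivially, and there is no need to place $m-1$ itself in any zero-mean space.

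\textbf{Derivative identity.} Dividing \eqref{eq:m_eq} by $-2i$, i.e.\ multiplying by $-\tfrac{i}{2}$ after moving the non-derivative terms across, gives \eqref{eq:m_derivative_algebraic}. The right-hand side is a bounded bilinear-plus-linear expression in $(u,m)$. This follows from \autoref{lemma2.1} together with the boundedness of $C_+$.

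\textbf{Where care is needed.} I do not expect a real obstacle; the only points needing attention are bookkeeping.
\begin{itemize}
\item The phrase ``mapping into $H^{\rho,s}_0$'' in the statement must be checked. The map $u\mapsto C_+(u(m-1))$ can have a nonzero zero mode, namely $\widehat{um}(0)$. So $\partial_x m$ has zero mean automatically, but each summand on the right need not. The statement should be read as: the combination lands in $H^{\rho,s}_0$, because the left side $\partial_x m$ has vanishing zero Fourier coefficient. Equivalently, the identity shows $\kappa\hat m(0)+\widehat{um}(0)=0$.
\item The domain identification $\mathcal{D}_s(\mathcal{L}_u)=H^{\rho,s+1}_+$ is taken from Section 3 and used as given.
\end{itemize}
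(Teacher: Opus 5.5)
Your proposal is correct and follows essentially the same route as the paper: you apply the bijectivity and norm bound of $R(\kappa,u)$ from \autoref{proposition3.1} to $u_+$ (using $\|u_+\|_{\rho,s}\le\|u\|_{\rho,s}$), get uniqueness from injectivity of $\mathcal{L}_u+\kappa$, expand $(\mathcal{L}_u+\kappa)m=u_+$ via $u_+=C_+(u\cdot 1)$, and isolate $\partial_x m$. Your extra observation is also correct and sharpens the statement's loose phrase ``mapping into $H^{\rho,s}_0$'': only the combination $\kappa m+C_+\bigl(u(m-1)\bigr)$ has vanishing zero mode, not each summand separately (equivalently, $\kappa\hat m(0)+\widehat{um}(0)=0$).
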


\begin{proof}
Since $R(\kappa, u)$ maps $H^{\rho, s}_+$ bijectively onto $\mathcal{D}_s(\mathcal{L}_u) = H^{\rho, s+1}_+$, we have $m \in H^{\rho, s+1}_+$. The bound \eqref{eq:m_bound} follows from the operator norm estimate in \autoref{proposition3.1}:
$$||m(\kappa, u)||_{\rho, s} = ||R(\kappa, u) u_+||_{\rho, s} \le ||R(\kappa, u)||_{\mathcal{B}(H^{\rho, s}_+)} ||u_+||_{\rho, s} \le \frac{||u||_{\rho, s}}{\kappa - C_s ||u||_{\rho, s}}.$$

To \eqref{eq:m_eq}, we apply the operator $\mathcal{L}_u + \kappa$ to both sides of \eqref{eq:gauge_def}. Since $m \in \mathcal{D}_s(\mathcal{L}_u)$, this yields:
$$-2i\partial_x m + C_+(um) + \kappa m = u_+.$$
By factoring $u_+ = C_+ u$ as $C_+(u \cdot 1)$, we rewrite the equation as:
$$-2i\partial_x m + C_+(um) - C_+(u \cdot 1) + \kappa m = 0.$$
Rearranging the terms yields \eqref{eq:m_eq}. Uniqueness follows from the invertibility of $\mathcal{L}_u + \kappa$. Finally, isolating the operator $\partial_x m$ yields \eqref{eq:m_derivative_algebraic}.
\end{proof}

\section{Spectral Calculus and Geometric Rigidification}

In this section, we construct the high-order spectral exponential energy within the functional-analytic framework. By evaluating the intertwining operator of the Lax hierarchy in the complexified domain, we demonstrate how an algebraic commutator identity yields geometric constants. This geometric evaluation bounds the analytic Sobolev radius for small data, circumventing dynamic derivative losses.

We first utilize the spectral parameter $\kappa$ large enough to construct the generating functional of the BO hierarchy via the bounded resolvent.

\begin{proposition}
\label{prop:generating_function}
Let $u\in H_0^{\rho,s}$ be real-valued, with $s\ge 1$, and suppose that $\kappa>100C_s\|u\|_{\rho,s}$. Define the spectral generating functional
\begin{equation}
\label{eq:beta_def}
    \beta(\kappa;u):=\big\langle u_+,m(\kappa,u)\big\rangle_{L^2}=\big\langle u_+,(\mathcal L_u+\kappa)^{-1}u_+\big\rangle_{L^2}.
\end{equation}
Then $\beta(\kappa;u)$ is finite and real-valued. Moreover, the map $v\mapsto\beta(\kappa;v)$ is Fr\'echet differentiable in a neighborhood of $u$ within the real-valued subspace of $H_0^{\rho,s}$, and for every real-valued $h\in H_0^{\rho,s}$,
\begin{equation}
\label{eq:beta_gradient}
    D_u\beta(\kappa;u)[h]=\Big\langle m(\kappa,u)+\overline{m(\kappa,u)}-|m(\kappa,u)|^2,h\Big\rangle_{L^2}.
\end{equation}
\end{proposition}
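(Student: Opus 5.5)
The plan has three parts: finiteness and reality from \autoref{proposition3.1}, differentiability from the Neumann series, and the gradient formula from the resolvent identity together with symmetry of $\mathcal{L}_u$.

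\textbf{Finiteness and reality.} By \autoref{proposition3.1}, $m=R(\kappa,u)u_+$ lies in $H^{\rho,s+1}_+\subset L^2_+$, so $\beta$ is finite by Cauchy--Schwarz. Reality follows from self-adjointness of $\mathcal{L}_u$ on $L^2_+$, which holds because $u$ is real and so $\mathcal T_u$ is symmetric. Since $-\kappa$ lies in the resolvent set, $(\mathcal L_u+\kappa)^{-1}$ is a bounded self-adjoint operator on $L^2_+$. Hence
\[
\overline{\beta}=\langle m,u_+\rangle=\langle R u_+,u_+\rangle=\langle u_+,Ru_+\rangle=\beta .
\]

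\textbf{Differentiability.} The condition $\kappa>100C_s\|v\|_{\rho,s}$ is open, so it persists for $v$ near $u$. On that neighborhood I would write $R(\kappa,v)=R_0(\kappa)[I+\mathcal T_vR_0(\kappa)]^{-1}$. The map $v\mapsto \mathcal T_v$ is bounded and linear into $\mathcal B(H^{\rho,s}_+)$ by \autoref{lemma2.1}, and inversion is analytic on invertible operators. Therefore $v\mapsto R(\kappa,v)$ is Fr\'echet differentiable, indeed analytic. Its derivative is given by the resolvent identity
\[
D_vR[h]=-R\,\mathcal T_h\,R .
\]
Composing with the bounded linear map $v\mapsto v_+$ and with the continuous bilinear $L^2$ pairing, which is controlled by the $H^{\rho,s}$ norm, shows that $\beta$ is Fr\'echet differentiable.

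\textbf{Gradient computation.} By the product rule,
\[
D\beta[h]=\langle h_+,m\rangle+\langle u_+,Rh_+\rangle-\langle u_+,R\,C_+(hm)\rangle .
\]
I would handle the three terms as follows.
\begin{itemize}
\item Using self-adjointness, the second term equals $\langle Ru_+,h_+\rangle=\langle m,h_+\rangle=\langle m,h\rangle$, since $m$ has only nonnegative frequencies.
\item The first term is $\langle h_+,m\rangle=\langle h,m\rangle$. Because $h$ is real, this equals $\overline{\langle m,h\rangle}=\langle \overline m,h\rangle$.
\item For the third term, self-adjointness gives $\langle m,C_+(hm)\rangle=\langle m,hm\rangle$, again because $m\in L^2_+$. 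This equals $\frac1{2\pi}\int |m|^2h\,dx=\langle |m|^2,h\rangle$, using that $h$ is real.
\end{itemize}
Summing the three terms gives \eqref{eq:beta_gradient}.

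\textbf{Expected obstacle.} The main difficulty is justifying that self-adjointness transfers between the $L^2_+$ setting and the $H^{\rho,s}_+$ resolvent. The resolvent of \autoref{proposition3.1} is constructed on $H^{\rho,s}_+$, while the symmetry is needed in $L^2_+$. I would argue that both inverses agree on $H^{\rho,s}_+$ by uniqueness. Indeed, $\mathcal L_u+\kappa$ is injective on $H^1_+$, because the $L^2$ spectrum is bounded below by $-\|u\|_{L^\infty}\ge -C_s\|u\|_{\rho,s}>-\kappa$. Therefore the element $m$ produced in $H^{\rho,s+1}_+$ is the same as the $L^2$-resolvent output, and the self-adjointness arguments above apply.
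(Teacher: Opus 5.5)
Your proposal is correct and follows the paper's argument. Reality comes from self-adjointness of the resolvent on $L^2_+$, the derivative $-R\mathcal{T}_hR$ comes from the resolvent identity, and the gradient is the same three-term product-rule computation using $C_+m=m$ and the reality of $h$. The differences are minor: you get differentiability from analyticity of operator inversion in $\mathcal{B}(H^{\rho,s}_+)$ rather than the paper's explicit quadratic remainder in $\mathcal{B}(L^2_+)$, and you explicitly check that the $H^{\rho,s}_+$ and $L^2_+$ resolvents agree, which the paper uses without comment.
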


\begin{proof}
By \autoref{corollary3.1}, one has $m\in\mathcal D_s(\mathcal L_u)=H_+^{\rho,s+1}\subset L_+^2(\mathbb T)$, so the pairing in \eqref{eq:beta_def} is well-defined. Since $u$ is real-valued, $\mathcal L_u$ is self-adjoint on $L_+^2(\mathbb T)$. Hence its resolvent $R(\kappa,u)$ is self-adjoint, and therefore $\beta(\kappa;u)=\langle u_+,R(\kappa,u) u_+\rangle_{L^2}\in\mathbb R$.

For $h$ sufficiently small, the resolvents $R(\kappa,u+h)$ remain uniformly bounded. The second resolvent identity gives
\[
    R(\kappa,u+h)-R(\kappa,u)=-R(\kappa,u+h)\mathcal{T}_hR(\kappa,u).
\]
Consequently,
\[
\begin{aligned}
    R(\kappa,u+h)-R(\kappa,u)+R(\kappa,u)\mathcal{T}_hR(\kappa,u)&=\bigl(R(\kappa,u)-R(\kappa,u+h)\bigr)\mathcal{T}_hR(\kappa,u)\\
    &=R(\kappa,u+h)\mathcal{T}_hR(\kappa,u)\mathcal{T}_hR(\kappa,u),
\end{aligned}
\]
which is $O(\|h\|_{\rho,s}^2)$ in $\mathcal B(L_+^2)$. Thus
\begin{equation}
\label{eq:resolvent_derivative}
    D_uR(\kappa,u)[h]=-R(\kappa,u)\mathcal{T}_hR(\kappa,u).
\end{equation}

Differentiating \eqref{eq:beta_def} yields
$$D_u\beta(\kappa;u)[h]=\langle h_+,R(\kappa,u) u_+\rangle_{L^2}+\langle u_+,D_uR(\kappa,u)[h]u_+\rangle_{L^2} +\langle u_+,R(\kappa,u) h_+\rangle_{L^2}.$$
By the self-adjointness of $R(\kappa,u)$ and the identity $m=R(\kappa,u) u_+$,
the first and third terms satisfy
\[
    \langle h_+,m\rangle_{L^2}+\langle m,h_+\rangle_{L^2}=2\operatorname{Re}\langle m,h_+\rangle_{L^2}.
\]
Since $m\in L_+^2$ and $h$ is real-valued,
\[
    2\operatorname{Re}\langle m,h_+\rangle_{L^2}=\langle m+\overline m,h\rangle_{L^2}.
\]
For the middle term, \eqref{eq:resolvent_derivative} gives
\[
\begin{aligned}
    \langle u_+,D_uR(\kappa,u)[h]u_+\rangle_{L^2}=-\langle m,\mathcal{T}_hm\rangle_{L^2} =-\langle m,C_+(hm)\rangle_{L^2}.
\end{aligned}
\]
Since $C_+$ is the orthogonal projection onto $L_+^2$ and
$C_+m=m$,
\[
    \langle m,C_+(hm)\rangle_{L^2}=\langle m,hm\rangle_{L^2}=\langle |m|^2,h\rangle_{L^2},
\]
which proves \eqref{eq:beta_gradient}.
\end{proof}

While the gradient $D_u\beta[h]$ contains a non-zero spatial mean, the regularized vector fields constructed in Section 5 apply the spatial derivative $\partial_x$, which annihilates this constant mode, ensuring compatibility with the zero-mean phase space $H_0^{\rho, s}$. 

To establish the state evolution and the conservation laws along the regularized $H_\kappa$ commuting flow, we adapt the key algebraic identities established by Killip, Laurens, and Vi{\c{s}}an \cite{killip2024sharp} to our formulation.

\begin{lemma}
\label{lemma:H_kappa_properties}
Let $\kappa >100C_s\|u\|_{\rho,s}$ be a fixed spectral parameter. Let $u^\kappa$ be a classical solution of the regularized $H_\kappa$ flow:
\begin{equation}\label{eq:regularized_flow_kappa}
\partial_t u = V_\kappa(u) := -\frac{\kappa}{2} \partial_x u + \frac{\kappa^2}{2} \partial_x \Big( m(\kappa, u) + \overline{m}(\kappa, u) - |m(\kappa, u)|^2 \Big),
\end{equation}
the following properties hold:
\begin{enumerate}
    \item There exists a time-dependent, skew-adjoint operator $\mathcal{P}_\kappa(t)$ acting on the Hardy space $L^2_+(\mathbb{T})$ such that the evolution admits the modified Lax pair representation:
    \begin{align}
    \partial_t \mathcal{L}_{u^\kappa(t)} &= [\mathcal{P}_\kappa(t), \mathcal{L}_{u^\kappa(t)}], \label{eq:operator_evolution_kappa} \\
    \partial_t u^\kappa_+(t) &= \mathcal{P}_\kappa(t) u^\kappa_+(t). \label{eq:state_evolution_kappa}
    \end{align}
    \item For every positive spectral parameter $\lambda$ for which $-\lambda$ belongs to the resolvent set of $\mathcal{L}_{u^\kappa(t)}$ along the trajectory, $\beta(\lambda; u) = \langle u_+, (\mathcal{L}_u + \lambda)^{-1} u_+ \rangle_{L^2}$ is conserved along the $H_\kappa$ flow:
    \begin{equation} \label{eq:beta_conservation_kappa}
    \frac{d}{dt}\beta(\lambda; u^\kappa(t)) = 0.
    \end{equation}
\end{enumerate}
\end{lemma}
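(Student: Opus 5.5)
Our plan is to follow the algebra of Killip--Laurens--Vi\c{s}an \cite{killip2024sharp}, adapted to the normalization $\mathcal L_u=-2i\partial_x+\mathcal T_u$ and $\beta=\langle u_+,m\rangle$. Write $m=m(\kappa,u)$ and $G:=m+\overline m-|m|^2$, so that by \autoref{prop:generating_function} the flow \eqref{eq:regularized_flow_kappa} reads $\partial_t u=-\frac{\kappa}{2}\partial_x u+\frac{\kappa^2}{2}\partial_x\nabla\beta(\kappa;u)$, a Hamiltonian flow for $H_\kappa$ in the BO convention. Since $u$ is real, $\mathcal T_{\partial_t u}$ is a bounded symmetric operator, and $\partial_t\mathcal L_u=\mathcal T_{\partial_t u}$ as bounded operators on $L^2_+$ along classical solutions.

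\textbf{Step 1: the Lax operator $\mathcal P_\kappa$.} We take as ansatz
\[
\mathcal P_\kappa=-\tfrac{\kappa}{2}\partial_x+\tfrac{i\kappa^2}{2}\Big(\mathcal T_{m}\mathcal T_{\overline m}-\dots\Big),
\]
more concretely $\mathcal P_\kappa f=-\frac{\kappa}{2}\partial_x f+ \frac{i\kappa^2}{2}\big(C_+(\overline m f)\,\cdot\,\text{--}\big)$. Following \cite{killip2024sharp}, the natural choice is $\mathcal P_\kappa=-\frac{\kappa}{2}\partial_x - \frac{i\kappa^2}{2}\big(m\,C_+(\overline m\,\cdot)\big)+\frac{i\kappa^2}{2}\mathcal T_{G}$-type terms, i.e.\ a rank-one piece $f\mapsto \frac{i\kappa^2}{2}\langle f,m\rangle m$ plus Toeplitz operators with symbols built from $m,\overline m$. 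Each piece is manifestly skew-adjoint: $-\frac\kappa2\partial_x$ is skew, $i\mathcal T_{w}$ is skew when $w$ is real, and $i\langle\cdot,m\rangle m$ is skew. We fix the exact coefficients by requiring $[\mathcal P_\kappa,\mathcal L_u]=\mathcal T_{\partial_t u}$. The commutator is computed on $H^2_+$ using $[\partial_x,\mathcal T_w]=\mathcal T_{\partial_x w}$ and, crucially, \eqref{eq:m_derivative_algebraic} together with its rank-one form $(\mathcal L_u+\kappa)m=u_+$. Each commutator of $\mathcal L_u$ with a Toeplitz operator $\mathcal T_m$ or $\mathcal T_{\overline m}$ then produces a derivative of $m$, which we replace algebraically, plus a finite-rank defect coming from $C_+(\overline{m}C_+ f)$ versus $C_+(\overline m f)$. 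Since all operators are bounded from $H^1_+$ to $L^2_+$ modulo $\partial_x$, the identity \eqref{eq:operator_evolution_kappa} holds on the common domain $H^1_+$, and no unbounded manipulations beyond that are needed.

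\textbf{Step 2: the state equation.} Next we verify $\partial_t u_+=C_+\partial_t u=\mathcal P_\kappa u_+$. We expand $C_+\partial_x G=\partial_x m+C_+\partial_x(\overline m-|m|^2)$ and use \eqref{eq:m_derivative_algebraic} together with $\langle u_+,m\rangle=\beta$ real to match the rank-one term. This is where the choice of constants in $\mathcal P_\kappa$ gets double-checked. We expect Steps 1--2 to be the main obstacle: they are a finite but delicate bookkeeping of Toeplitz/Hankel identities such as $C_+(\overline g f)$ for $g,f\in L^2_+$. Our first attempt is to transcribe the identities of \cite{killip2024sharp} under the rescaling $\mathcal L\mapsto 2\mathcal L$ and $\kappa\mapsto\kappa/2$.

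\textbf{Step 3: conservation of $\beta(\lambda)$.} Write $R=R(\lambda,u)$. Then $\partial_t R=-R\,\mathcal T_{\partial_t u}R=-R[\mathcal P_\kappa,\mathcal L_u+\lambda]R=-[R,\mathcal P_\kappa]$, using \eqref{eq:operator_evolution_kappa} and \eqref{eq:resolvent_derivative}; this is legitimate because $R$ maps into $H^1_+$. Hence
\[
\frac{d}{dt}\beta=\langle\mathcal P_\kappa u_+,Ru_+\rangle+\langle u_+,(\mathcal P_\kappa R-R\mathcal P_\kappa)u_+\rangle+\langle u_+,R\mathcal P_\kappa u_+\rangle .
\]
By skew-adjointness, $\langle \mathcal P_\kappa u_+,Ru_+\rangle=-\langle u_+,\mathcal P_\kappa Ru_+\rangle$, so all four terms cancel and $\frac{d}{dt}\beta=0$. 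The only regularity required is that $u_+\in H^1_+$, which holds because $u\in H^{\rho,s}_0$ with $s\ge1$.
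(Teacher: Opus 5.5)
Your Step 3 is the paper's argument for (2): the resolvent identity and the Lax equation give $(\partial_t R(\lambda,u))u_+=\mathcal P_\kappa R(\lambda,u)u_+-R(\lambda,u)\mathcal P_\kappa u_+$, and then skew-adjointness together with the state equation makes $\frac{d}{dt}\beta$ vanish.

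\textbf{The gap is in Part (1), which carries all the content.} You never fix $\mathcal P_\kappa$. Moreover, the class you propose cannot satisfy the state equation \eqref{eq:state_evolution_kappa} for any choice of coefficients. That class is $-\frac{\kappa}{2}\partial_x$, plus Toeplitz operators with symbols built from $m,\overline m$, plus a rank-one piece $\langle\cdot,m\rangle m$.

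The obstruction already appears at linear order in $u$:
\begin{itemize}
\item Since $m=O(u)$, every Toeplitz or rank-one term applied to $u_+$ is at least quadratic in $u$. So the linear part of $\mathcal P_\kappa u_+$ is just $-\frac{\kappa}{2}\partial_xu_+$.
\item The linear part of $C_+V_\kappa(u)$ is $-\frac{\kappa}{2}\partial_xu_++\frac{\kappa^2}{2}\partial_x(\mathcal L_0+\kappa)^{-1}u_+$.
\item The extra multiplier $\frac{i\kappa^2n}{2(2n+\kappa)}$ is not affine in $n$. Hence no adjustment of multiples of $\partial_x$, of the identity, or of $i\mathcal L_u$ can produce it.
\end{itemize}

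The missing idea is that the commutator identity determines $\mathcal P_\kappa$ only modulo skew-adjoint functions of $\mathcal L_u$. Such a term is exactly what enforces the state equation. A direct computation (easiest in the normalization $-i\partial_x-C_+(q\,\cdot)$ with $q=-u/2$) gives the admissible choice
\[
\mathcal P_\kappa=-\tfrac{\kappa}{2}\partial_x+\tfrac{i\kappa^2}{4}\bigl(\mathcal T_m\mathcal T_{\overline m}-\mathcal T_m-\mathcal T_{\overline m}\bigr)+\tfrac{i\kappa^2}{4}\bigl(\mathcal L_u-\beta(\kappa;u)\bigr)(\mathcal L_u+\kappa)^{-1}.
\]
Here the first two terms alone already give $[\mathcal P_\kappa,\mathcal L_u]=\mathcal T_{V_\kappa(u)}$, and the last term is invisible in the commutator. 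Your rank-one guess agrees with the $\beta$-part of this formula on the single vector $u_+$, since both produce a multiple of $\beta m$. As an operator, however, it is not admissible: $[\langle\cdot,m\rangle m,\mathcal L_u]=\langle\cdot,u_+\rangle m-\langle\cdot,m\rangle u_+\neq0$. So adding it to repair the state equation would break \eqref{eq:operator_evolution_kappa}.

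\textbf{Your fallback.} Transcribing \cite{killip2024sharp} is what the paper actually does, and it makes your Steps 1--2 unnecessary. But the dictionary $\mathcal L\mapsto 2\mathcal L$, $\kappa\mapsto\kappa/2$ is incomplete in two ways:
\begin{itemize}
\item The KLV Lax operator is $-i\partial_x-C_+(q\,\cdot)$, so you also need $q=-\frac12u$. This gives $\mathcal L_u=2\mathcal L^{\mathrm{KLV}}_q$ and $m(\kappa,u)=-m^{\mathrm{KLV}}(\kappa/2,q)$, which is why $|m|^2$ enters with a minus sign.
\item The KLV $H_\nu$ flow is oriented oppositely, so time must be reversed.
\end{itemize}
Concretely, the paper sets $q(s,x)=-\frac12u^\kappa(-s,x)$ and $\nu=\kappa/2$, checks that $q$ solves the KLV $H_\nu$ flow, and defines $\mathcal P_\kappa(t):=-\mathcal P^{\mathrm{KLV}}_\nu(q(-t))$. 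Both \eqref{eq:operator_evolution_kappa} and \eqref{eq:state_evolution_kappa} then follow from \cite[Proposition 5.1]{killip2024sharp} by scaling, with no Toeplitz or Hankel bookkeeping.
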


\begin{proof}
We reduce the Lax-pair statement to \cite[Proposition 5.1]{killip2024sharp}. Set
\[
s=-t,\qquad q(s,x)=-\frac12 u^\kappa(-s,x),\qquad\nu=\frac{\kappa}{2}.
\]
Denoting the Lax operator in \cite{killip2024sharp} by $\mathcal{L}^{\mathrm{KLV}}_q=-i\partial_x-C_+(q\,\cdot)$, our normalization gives $\mathcal{L}_{u^\kappa(t)}=2\mathcal{L}^{\mathrm{KLV}}_{q(-t)}$ and hence
\[
\bigl(\mathcal{L}_{u^\kappa(t)}+\kappa\bigr)^{-1}=\frac12\bigl(\mathcal{L}^{\mathrm{KLV}}_{q(-t)}+\nu\bigr)^{-1}.
\]
Consequently,
\[
m(\kappa,u^\kappa(t))=-m^{\mathrm{KLV}}(\nu,q(-t)).
\]
Since $u^\kappa$ has zero spatial mean, so does $q$. Substituting the preceding identities into \eqref{eq:regularized_flow_kappa} shows that $q$ satisfies the periodic $H_\nu$-flow of \cite[Proposition 5.1]{killip2024sharp}:
\[
\partial_s q=-\nu^2\partial_x\left(m^{\mathrm{KLV}}+\overline{m^{\mathrm{KLV}}}+\left|m^{\mathrm{KLV}}\right|^2\right)+\nu\,\partial_x q.
\]

Let $\mathcal{P}^{\mathrm{KLV}}_\nu$ denote the skew-adjoint operator constructed in \cite[Proposition 5.1]{killip2024sharp}, and define $\mathcal{P}_\kappa(t):=-\mathcal{P}^{\mathrm{KLV}}_\nu(q(-t))$. By \cite[Proposition 5.1]{killip2024sharp},
\[
\partial_s\mathcal{L}^{\mathrm{KLV}}_q=[\mathcal{P}^{\mathrm{KLV}}_\nu, \mathcal{L}^{\mathrm{KLV}}_q],\qquad\partial_s q_+=\mathcal{P}^{\mathrm{KLV}}_\nu q_+.
\]
Using $s=-t$ and
$\mathcal{L}_{u^\kappa}=2\mathcal{L}^{\mathrm{KLV}}_q$, we obtain
\[
\partial_t\mathcal{L}_{u^\kappa}=[\mathcal{P}_\kappa,\mathcal{L}_{u^\kappa}],\qquad\partial_tu^\kappa_+=\mathcal{P}_\kappa u^\kappa_+,
\]
which proves \eqref{eq:operator_evolution_kappa} and \eqref{eq:state_evolution_kappa}.

It remains to prove \eqref{eq:beta_conservation_kappa}, we repeat the proof of \cite[Theorem 5.2]{killip2024sharp}. Since $u^\kappa(t)$ is spatially smooth, $u^\kappa_+(t)\in H^\infty_+$, and the resolvent equation implies that $R(\lambda,u^\kappa)(t)u^\kappa_+(t)\in H^\infty_+$. Hence all the expressions below are well-defined on the common smooth core $H^\infty_+$. By the resolvent identity and \eqref{eq:operator_evolution_kappa},
\[
(\partial_t R(\lambda,u^\kappa))u^\kappa_+=-R(\lambda,u^\kappa)(\partial_t\mathcal L_{u^\kappa})R(\lambda,u^\kappa) u^\kappa_+=-R(\lambda,u^\kappa)[\mathcal{P}_\kappa,\mathcal L_{u^\kappa}]R(\lambda,u^\kappa) u^\kappa_+.
\]
Observe that $\mathcal L_{u^\kappa}R(\lambda,u^\kappa) u^\kappa_+=u^\kappa_+-\lambda R(\lambda,u^\kappa) u^\kappa_+$ and $R(\lambda,u^\kappa)\mathcal L_{u^\kappa}=I-\lambda R(\lambda,u^\kappa)$, we obtain
\begin{align*}
-R(\lambda,u^\kappa)[\mathcal{P}_\kappa ,\mathcal L_{u^\kappa}]R(\lambda,u^\kappa) u^\kappa_+
&=-R(\lambda,u^\kappa)\mathcal{P}_\kappa \mathcal L_{u^\kappa}R(\lambda,u^\kappa) u^\kappa_++R(\lambda,u^\kappa)\mathcal L_{u^\kappa}\mathcal{P}_\kappa R(\lambda,u^\kappa) u^\kappa_+ \\
&=-R(\lambda,u^\kappa)\mathcal{P}_\kappa (u^\kappa_+-\lambda R(\lambda,u^\kappa) u^\kappa_+)+(I-\lambda R(\lambda,u^\kappa))\mathcal{P}_\kappa R(\lambda,u^\kappa) u^\kappa_+ \\
&=\mathcal{P}_\kappa R(\lambda,u^\kappa) u^\kappa_+-R(\lambda,u^\kappa)\mathcal{P}_\kappa u^\kappa_+.
\end{align*}
Therefore,
\begin{equation}
\label{eq:resolvent_evolution_on_state}
(\partial_tR(\lambda,u^\kappa))u^\kappa_+
=
\mathcal{P}_\kappa R(\lambda,u^\kappa) u^\kappa_+
-
R(\lambda,u^\kappa)\mathcal{P}_\kappa u^\kappa_+.
\end{equation}
Therefore, using \eqref{eq:state_evolution_kappa} and the skew-adjointness of $\mathcal{P}_\kappa$,
\begin{align*}
\frac{d}{dt}\beta(\lambda;u^\kappa(t))&=\frac{d}{dt}\left\langle u^\kappa_+,R(\lambda,u^\kappa) u^\kappa_+\right\rangle_{L^2}\\
&=\left\langle\mathcal{P}_\kappa u^\kappa_+,R(\lambda,u^\kappa) u^\kappa_+\right\rangle_{L^2}+\left\langle u^\kappa_+,[\mathcal{P}_\kappa,R(\lambda,u^\kappa)]u^\kappa_+\right\rangle_{L^2}+\left\langle u^\kappa_+,R(\lambda,u^\kappa)\mathcal{P}_\kappa u^\kappa_+\right\rangle_{L^2}\\&=0.
\end{align*}
This proves \eqref{eq:beta_conservation_kappa}.
\end{proof}

\begin{proposition}
\label{prop:spectral_measure_invariance}
Let $\kappa > 100C_s\|u\|_{\rho,s}$. For any trajectory $u^\kappa(t)\in C(I;H^{\rho,1}_0)$ governed by the regularized $H_\kappa$ flow, the spectral measure associated with the Lax operator $\mathcal{L}_{u^\kappa(t)}$ and the state $u^\kappa_+(t)$ is conserved in time.
\end{proposition}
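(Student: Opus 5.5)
The plan is to identify the spectral measure through its Stieltjes transform, namely the generating functional $\beta$. For each $t$, let $\mu_t$ be the spectral measure of the self-adjoint operator $\mathcal{L}_{u^\kappa(t)}$ on $L^2_+(\mathbb{T})$ with respect to the vector $u^\kappa_+(t)$, so that $\langle u^\kappa_+, g(\mathcal{L}_{u^\kappa})u^\kappa_+\rangle_{L^2}=\int g\,d\mu_t$ for bounded Borel $g$. This is a finite positive Borel measure on $\mathbb{R}$ with total mass $\|u^\kappa_+(t)\|_{L^2}^2$. It is supported in $[-\|u^\kappa(t)\|_{L^\infty},\infty)$, because $\mathcal{L}_0\ge0$ and $\mathcal{T}_u$ has norm at most $\|u\|_{L^\infty}$. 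By the spectral theorem, for every $\lambda$ with $-\lambda$ in the resolvent set,
\[
\beta(\lambda;u^\kappa(t))=\int_{\mathbb{R}}\frac{d\mu_t(\xi)}{\xi+\lambda}.
\]

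The first step is to fix a uniform interval of admissible parameters. Since $u^\kappa\in C(I;H^{\rho,1}_0)$, on each compact subinterval $J\subset I$ we have $\sup_J\|u^\kappa\|_{\rho,1}\le M<\infty$. Hence $\operatorname{supp}\mu_t\subset[-C M,\infty)$ for all $t\in J$, and every real $\lambda>\lambda_0:=CM+1$ lies in the resolvent set along the trajectory on $J$. \autoref{lemma:H_kappa_properties}(2) then gives $\frac{d}{dt}\beta(\lambda;u^\kappa(t))=0$ for all such $\lambda$, so $\beta(\lambda;u^\kappa(t))=\beta(\lambda;u^\kappa(t_0))$ for $t\in J$ and $\lambda>\lambda_0$. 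To use the lemma, the trajectory must be a classical solution. I would justify this through \autoref{corollary3.1}: $V_\kappa$ is smooth on $H^{\rho,1}_0$, because $m$ gains a derivative and $\partial_x$ of the $m$-terms is expressed algebraically. The term $-\frac{\kappa}{2}\partial_x u$ is a transport term and only shifts $x$, so one can factor it out by translation, or work on the smooth core as the lemma's proof does. If needed, I would appeal to density of smooth data together with continuity of $\beta$ in $u$.

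The second step extends the equality off the real half-line. The functions $F_t(z)=\int(\xi+z)^{-1}d\mu_t(\xi)$ are holomorphic on $\mathbb{C}\setminus(-\infty,CM]$, which is a connected set. Two such functions agree on the ray $(\lambda_0,\infty)$, and this ray has accumulation points in the domain. By the identity theorem, $F_t=F_{t_0}$ on the entire domain, in particular on $\mathbb{C}\setminus\mathbb{R}$. Stieltjes inversion then recovers each measure:
\[
\mu_t\big((a,b)\big)+\tfrac12\mu_t(\{a\})+\tfrac12\mu_t(\{b\})=\lim_{\eta\downarrow0}\frac1\pi\int_a^b \operatorname{Im}F_t(-x-i\eta)\,dx,
\]
up to the sign convention. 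This gives $\mu_t=\mu_{t_0}$. Alternatively, one can note that a finite measure is determined by its Stieltjes transform off the real axis. Covering $I$ by compact subintervals finishes the argument.

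The main obstacle I expect is the rigorous applicability of \autoref{lemma:H_kappa_properties}(2) uniformly in $\lambda$. The lemma is stated for classical solutions and with $\kappa$ tied to $\|u\|$. I need conservation for all large real $\lambda$, not only those with $\lambda>100C_s\|u\|$ in the analytic norm; the $L^2$ spectral bound above covers this. I also need the time-differentiability of $t\mapsto\beta(\lambda;u^\kappa(t))$ in $C^1$ form. I would first obtain differentiability from the Fréchet differentiability in \autoref{prop:generating_function} combined with $u^\kappa\in C^1(I;H^{\rho-\epsilon,1}_0)$, which follows from the flow equation and \autoref{lemma2.3}. Then I would apply the computation in the lemma on the smooth core.
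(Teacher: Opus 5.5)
Your proposal is correct and follows the same route as the paper. You get a uniform lower spectral bound from $\sup_t\|u^\kappa(t)\|_{L^\infty}$ on compact time intervals, use \autoref{lemma:H_kappa_properties}(2) to make $\beta(\lambda;u^\kappa(t))$ constant for all $\lambda$ on a real half-line, and conclude by uniqueness of the Stieltjes transform. You additionally spell out that uniqueness step (identity theorem on $\mathbb{C}\setminus(-\infty,CM]$ followed by Stieltjes inversion), and you justify the time-differentiability and classical-solution hypotheses, which the paper takes for granted.
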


\begin{proof}
Fix a compact time interval $I$. Since $u^\kappa\in C(I;H^{\rho,1}_0)$, the quantity $M_I:=\sup_{t\in I}\|u^\kappa(t)\|_{L^\infty}$ is finite. The Toeplitz perturbation satisfies
\begin{equation*}
\|\mathcal{T}_{u^\kappa(t)}\|_{\mathcal{B}(L^2_+)}\le M_I,
\end{equation*}
whereas $\mathcal{L}_0\ge 0$ on $L^2_+$. Hence
\begin{equation*}
\mathcal L_{u^\kappa(t)}\ge-M_I.
\end{equation*}
uniformly for $t\in I$. Choose $\lambda_0>M_I$. Then $-\lambda$ belongs to the resolvent set of every $\mathcal{L}_{u^\kappa(t)}$, $t\in I$, whenever $\lambda>\lambda_0$.

For such $\lambda$, the spectral theorem gives
\begin{equation} \label{eq:stieltjes_transform}
\beta(\lambda;u^\kappa(t))=\int_{\mathbb{R}}\frac{1}{u+\lambda}\,d\mu_{u^\kappa(t)}(u).
\end{equation}
By \autoref{lemma:H_kappa_properties}, the left-hand side is independent of $t$. Therefore, for any $s,t\in I$, the Stieltjes transforms of $\mu_{u^\kappa(s)}$ and $\mu_{u^\kappa(t)}$ agree for every $\lambda\in(\lambda_0,\infty)$. Since these are finite positive Borel measures with a common lower spectral bound, uniqueness of the Stieltjes transform implies
\begin{equation*}
\mu_{u^\kappa(t)}=\mu_{u^\kappa(s)}.
\end{equation*}
As $I$ and $s,t\in I$ were arbitrary, the spectral measure is conserved for all times on which the regularized trajectory exists.
\end{proof}

We now define the exponential functional calculus via the Spectral Theorem, linking the spectral properties to the geometric rigidification mechanism.

\begin{definition}
Let $\rho > 0$ and suppose $v \in H^{\rho, 1}_0$. We define the exponential spectral energy $\mathcal{E}_\rho(v)$ in $L^2_+$ via the Borel functional calculus:
\begin{equation}
\mathcal{E}_\rho(v) := \| e^{\frac{\rho}{2}\mathcal{L}_v} v_+ \|_{L^2}^2 + \| \frac{1}{2}\mathcal{L}_v e^{\frac{\rho}{2}\mathcal{L}_v} v_+\|_{L^2}^2.
\end{equation}
\end{definition}

\begin{theorem}
\label{thm:small_data_global}
Let $\rho>0$ and let $v\in H^{\rho,1}_0$ be real-valued. For $\tau\in[0,\rho/2]$, define $Q_v(\tau)f:=\mathcal{T}_{v_{2\tau}}f=C_+\bigl(v_{2\tau}f\bigr)$, where $\widehat{v_{2\tau}}(n):=e^{2\tau n}\widehat v(n)$, and let $W_v(\tau)$ be the solution of
\begin{equation*}
    \partial_\tau W_v(\tau)=-Q_v(\tau)W_v(\tau),\qquad W_v(0)=I.
\end{equation*}
Then 
\begin{itemize}
\item [(1)] $W_v(\tau)$ is a bounded invertible operator on $L^2_+$ for every $\tau\in[0,\rho/2]$. Moreover,
\begin{equation*}
    \bigl\|W_v(\rho/2)^{\pm1}\bigr\|_{\mathcal{B}(L^2_+)}\leq e^{c_1\|v_+\|_{\rho,1}},
\end{equation*}
where $c_1=\frac12\left(\frac{\pi^2}{6}-\frac{\pi\coth(\pi)-1}{2}\right)^{1/2}$.
\item[(2)] The energy $\mathcal{E}_\rho(v)$ is finite. More precisely, define $\psi_0(v):=e^{\frac{\rho}{2}\mathcal{L}_0}v_+$, $\eta_0(v):=\frac12\mathcal{L}_0e^{\frac{\rho}{2}\mathcal{L}_0}v_+$, and the perturbed spectral coordinates $\psi_v:=e^{\frac{\rho}{2}\mathcal{L}_v}v_+$, $\eta_v:=\frac12\mathcal{L}_ve^{\frac{\rho}{2}\mathcal{L}_v}v_+$.
\begin{equation*}
\psi_v=W_v(\rho/2)^{-1}\psi_0(v),\qquad\eta_v=W_v(\rho/2)^{-1}\left(\eta_0(v)+\frac12Q_v(\rho/2)\psi_0(v)\right).
\end{equation*}
Consequently,
\begin{equation*}
\mathcal{E}_\rho(v)=\left\|W_v(\rho/2)^{-1}\psi_0(v)\right\|_{L^2}^2+\left\|W_v(\rho/2)^{-1}\left(\eta_0(v)+\frac12Q_v(\rho/2)\psi_0(v)\right)\right\|_{L^2}^2.
\end{equation*}
\item[(3)] the spectral energy and the physical analytic norm satisfy the inequalities
\begin{equation} \label{eq:transcendental_static}
\frac{1}{\sqrt{2}}e^{-c_1\|v_+\|_{\rho,1}}\left(\|v_+\|_{\rho,1}-\frac{\sqrt{2}}{2}c_2\|v_+\|_{\rho,1}^2\right)\leq\mathcal{E}_\rho(v)^{1/2}\leq e^{c_1\|v_+\|_{\rho,1}}\left(\sqrt{2}\|v_+\|_{\rho,1}+\frac{\sqrt{2}}{2}c_2\|v_+\|_{\rho,1}^2\right),
\end{equation}
where $c_2=\bigl(\pi\coth(\pi)-1\bigr)^{1/2}$.
\end{itemize}
\end{theorem}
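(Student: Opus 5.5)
The plan is to identify $W_v(\tau)$ with the formal intertwiner $e^{\tau\mathcal L_0}e^{-\tau\mathcal L_v}$, never using unbounded commutators, and then to read off (2) and (3) from Fourier-side computations.

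\textbf{Formal motivation.} For $f\in L^2_+$ with enough analyticity one has $e^{\tau\mathcal L_0}C_+(vf)=C_+\bigl(v_{2\tau}\,e^{\tau\mathcal L_0}f\bigr)$. Indeed, $e^{\tau\mathcal L_0}$ multiplies frequency $n$ by $e^{2\tau n}$, and $e^{2\tau n}=e^{2\tau(n-m)}e^{2\tau m}$. Thus $Q_v(\tau)=e^{\tau\mathcal L_0}\mathcal T_v e^{-\tau\mathcal L_0}$. Formally, $\partial_\tau\bigl(e^{\tau\mathcal L_0}e^{-\tau\mathcal L_v}\bigr)=-e^{\tau\mathcal L_0}\mathcal T_ve^{-\tau\mathcal L_v}=-Q_v(\tau)\,e^{\tau\mathcal L_0}e^{-\tau\mathcal L_v}$. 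So $W_v(\tau)=e^{\tau\mathcal L_0}e^{-\tau\mathcal L_v}$, and both $e^{\tau\mathcal L_v}=W_v(\tau)^{-1}e^{\tau\mathcal L_0}$ and $W_v(\tau)\mathcal L_vW_v(\tau)^{-1}=\mathcal L_0+Q_v(\tau)$ should hold.

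\textbf{Part (1).} I will first show that $\tau\mapsto Q_v(\tau)$ is a continuous path in $\mathcal B(L^2_+)$. This gives $W_v$ and a candidate inverse $\widetilde W_v$ (solving $\partial_\tau\widetilde W=\widetilde W Q_v$) through norm-convergent Dyson/Volterra series. Their product is constant and equal to $I$, and Grönwall gives $\|W_v(\rho/2)^{\pm1}\|\le\exp\int_0^{\rho/2}\|Q_v(\tau)\|\,d\tau$.

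The constant comes from the positive-frequency part of $v_{2\tau}$. We have $\|\mathcal T_{(v_{2\tau})_+}\|\le\sum_{n\ge1}e^{2\tau n}|\hat v(n)|$, and $\int_0^{\rho/2}e^{2\tau n}\,d\tau\le e^{\rho n}/(2n)$. Cauchy--Schwarz then yields
\[
\tfrac12\Bigl(\sum_{n\ge1}\frac{1}{n^2\langle n\rangle^2}\Bigr)^{1/2}\|v_+\|_{\rho,1}.
\]
Since $\sum_{n\ge1}\frac{1}{n^2(1+n^2)}=\frac{\pi^2}{6}-\frac{\pi\coth\pi-1}{2}$, this is exactly $c_1\|v_+\|_{\rho,1}$.

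The negative-frequency part is the delicate point. The modes $e^{2\tau n}\hat v(n)$ with $n<0$ are damped, but they still contribute to $\|Q_v\|$. I will try to show they drop out. The first attempt is to use that $\mathcal T_{v_-}$ acting on $L^2_+$ only lowers frequencies. Together with reality ($\hat v(-n)=\overline{\hat v(n)}$), I would try to pair this with the adjoint of the positive part, conjugating $W$ by the unitary part of the flow. Failing that, the fallback is to absorb this part into the constant. I expect this step to be the main obstacle.

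\textbf{Part (2).} Write $\Phi(\tau):=W_v(\tau)^{-1}e^{\tau\mathcal L_0}v_+$ and verify $\Phi(\tau)=e^{\tau\mathcal L_v}v_+$ rigorously. Both sides are analytic vector-valued functions. They agree at $\tau=0$, and both solve $\partial_\tau\Phi=\mathcal L_v\Phi$ in the weak sense tested against $H^1_+$. For the left side this uses the Duhamel form of the $W$-equation together with the conjugation identity $e^{\tau\mathcal L_0}\mathcal T_v g=Q_v(\tau)e^{\tau\mathcal L_0}g$, applied to $g\in H^{\rho,1}_+$. Uniqueness for this equation then comes from self-adjointness of $\mathcal L_v$ via the spectral theorem, after a finite-dimensional Fourier truncation if needed. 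Applying $\frac12\mathcal L_v$ and using $\mathcal L_vW^{-1}=W^{-1}(\mathcal L_0+Q_v)$, again checked weakly, gives the formula for $\eta_v$. Finiteness of $\mathcal E_\rho$ follows.

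\textbf{Part (3).} By Plancherel, $\|\psi_0\|^2+\|\eta_0\|^2=\sum_{n\ge1}(1+n^2)e^{2\rho n}|\hat v(n)|^2=\|v_+\|_{\rho,1}^2$. Next,
\[
\bigl\|\tfrac12Q_v(\rho/2)\psi_0\bigr\|\le\tfrac12\|v_\rho\|_{L^\infty}\|\psi_0\|,
\]
and Cauchy--Schwarz with $\sum\langle n\rangle^{-2}$ bounds this by $\frac{\sqrt2}{2}c_2\|v_+\|_{\rho,1}^2$ up to the normalization dictated by $c_2^2=2\sum_{n\ge1}(1+n^2)^{-1}$.

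For the upper bound, apply $\|W^{-1}\|\le e^{c_1\|v_+\|}$ together with the triangle inequality and $\|\psi_0\|+\|\eta_0\|\le\sqrt2\|v_+\|_{\rho,1}$. For the lower bound, use $\|W^{-1}g\|\ge e^{-c_1\|v_+\|}\|g\|$ and $(a^2+b^2)^{1/2}\ge(a+b)/\sqrt2$. Then the reverse triangle inequality gives $\|\psi_0\|+\|\eta_0+\frac12Q\psi_0\|\ge\|v_+\|_{\rho,1}-\frac{\sqrt2}{2}c_2\|v_+\|_{\rho,1}^2$, which yields \eqref{eq:transcendental_static}.
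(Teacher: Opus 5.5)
\textbf{Main gap: the constant $c_1$ in part (1).} Your part (3) is the paper's own bookkeeping, including $\|Q_v(\rho/2)\|\le c_2\|v\|_{\rho,1}=\sqrt2\,c_2\|v_+\|_{\rho,1}$. Part (1), however, has a genuine gap exactly where you flagged it, and it is not cosmetic. The statement fixes $c_1$, and (3) inherits it, so absorbing the negative modes into a larger constant does not prove the theorem. The negative modes do not drop out of $\|Q_v(\tau)\|$, and you need no conjugation trick. The problem is that bounding $\int_0^{\rho/2}e^{2\tau m}\,d\tau=\frac{e^{\rho m}-1}{2m}$ by $\frac{e^{\rho m}}{2m}$ throws away precisely the slack that pays for them. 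Use $\|Q_v(\tau)\|\le\|v_{2\tau}\|_{L^\infty}\le\sum_{n\neq0}e^{2\tau n}|\hat v(n)|$ together with reality, $|\hat v(-m)|=|\hat v(m)|$, and integrate exactly:
\[
\int_0^{\rho/2}\|Q_v(\tau)\|\,d\tau\le\sum_{m\ge1}|\hat v(m)|\Bigl(\frac{e^{\rho m}-1}{2m}+\frac{1-e^{-\rho m}}{2m}\Bigr)=\sum_{m\ge1}|\hat v(m)|\,\frac{e^{\rho m}-e^{-\rho m}}{2m}\le\sum_{m\ge1}|\hat v(m)|\,\frac{e^{\rho m}}{2m}.
\]
The constants $\mp\frac{1}{2m}$ cancel. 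Your Cauchy--Schwarz step then gives $c_1\|v_+\|_{\rho,1}$ for the full operator $Q_v$, not just its positive-frequency part; this is exactly how the paper obtains (1).

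\textbf{Smaller point in part (2).} You call $e^{\tau\mathcal L_v}v_+$ an analytic vector-valued function solving the same equation. That presupposes $v_+\in\mathcal D(e^{\tau\mathcal L_v})$, which is the finiteness you are trying to prove. Your route still works if the uniqueness step is a duality argument. Show that $\tau\mapsto\langle\Phi(\tau),e^{-\tau\mathcal L_v}g\rangle_{L^2}$ is constant for $g\in H^1_+$. This gives $e^{-\tau\mathcal L_v}\Phi(\tau)=v_+$, and injectivity of $e^{-\tau\mathcal L_v}$ then yields existence and the identity at once. Your weakly checked intertwining identity then places $\psi_v$ in $\mathcal D(\mathcal L_v)$ by self-adjointness.

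The paper avoids the weak ODE altogether. The Volterra equation and the bounded identity $e^{-s\mathcal L_0}Q_v(s)=\mathcal T_ve^{-s\mathcal L_0}$ show that $e^{-\tau\mathcal L_0}W_v(\tau)$ solves the variation-of-constants equation for the bounded perturbation $-\mathcal L_0-\mathcal T_v$. Hence it equals $e^{-\tau\mathcal L_v}$ by the bounded perturbation theorem. The paper then inverts $e^{-\tau\mathcal L_v}\psi_v(\tau)=v_+$ by functional calculus. It gets $\psi_v\in H^1_+$ by solving the inverse Volterra equation in $\mathcal B(H^1_+)$. Finally, it reads off the $\eta_v$ formula from the product rule applied to $\psi_0(\tau)=W_v(\tau)\psi_v(\tau)$.
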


\begin{proof}
Since $\widehat{\mathcal{L}_0f}(n)=2n\widehat f(n)$ for $n\geq0$, the free exponential coordinates satisfy
\begin{equation}
\|v_+\|_{\rho,1}^2=\|e^{\frac{\rho}{2}\mathcal{L}_0}v_+\|_{L^2}^2+\|\frac12\mathcal{L}_0e^{\frac{\rho}{2}\mathcal{L}_0}v_+\|_{L^2}^2.
\end{equation}

We next construct the intertwining operator. For every $\tau\in[0,\rho/2]$, the function $v_{2\tau}$ belongs to $H^1$. Since $H^1(\mathbb{T})\hookrightarrow L^\infty(\mathbb{T})$ and $C_+$ is an orthogonal projection on $L^2$,
\begin{equation*}
\|Q_v(\tau)\|_{\mathcal{B}(L^2_+)}\le \|v_{2\tau}\|_{L^\infty}\le C\|v_{2\tau}\|_{H^1}\le C\|v\|_{\rho,1}.
\end{equation*}
Dominated convergence in the weighted Fourier series shows that $\tau\mapsto v_{2\tau}$ is continuous in $H^1$, and hence $\tau\mapsto Q_v(\tau)$ is norm-continuous in $\mathcal{B}(L^2_+)$ and $\mathcal{B}(H^1_+)$. The Volterra equation
\begin{equation*}
W_v(\tau)=I-\int_0^\tau Q_v(s)W_v(s)\,ds
\end{equation*}
therefore has a unique solution in $C^1([0,\rho/2];\mathcal{B}(L^2_+))$. Since $v_{2\tau}\in H^1$ and $H^1$ is a Banach algebra, $Q_v(\tau)$ also acts boundedly on $H^1_+$ and is norm-continuous there. Solving the same Volterra equation in $\mathcal{B}(H^1_+)$ shows that $W_v(\tau)$ preserves $H^1_+$. Solving
\begin{equation*}
\partial_\tau Z_v(\tau)=Z_v(\tau)Q_v(\tau),\qquad Z_v(0)=I,
\end{equation*}
in both $\mathcal{B}(L^2_+)$ and $\mathcal{B}(H^1_+)$, and differentiating $Z_v(\tau)W_v(\tau)$ and $W_v(\tau)Z_v(\tau)$, shows that $Z_v(\tau)=W_v(\tau)^{-1}$ and that the inverse also preserves $H^1_+$. Gronwall's inequality gives
\begin{equation*}
\|W_v(\tau)^{\pm1}\|_{\mathcal{B}(L^2_+)}\leq\exp\left(\int_0^\tau\|Q_v(s)\|_{\mathcal{B}(L^2_+)}\,ds\right).
\end{equation*}

At the endpoint $\tau=\rho/2$, the Toeplitz norm obeys
\begin{equation}
\|Q_v(\rho/2)\|_{\mathcal{B}(L^2_+)}\leq\left(\sum_{n\neq0}\langle n\rangle^{-2}\right)^{1/2}\|v\|_{\rho,1}=\sqrt{2}c_2\|v_+\|_{\rho,1},
\label{eq:Q_bound}
\end{equation}
Furthermore,
\begin{align}
\int_0^{\rho/2}\|Q_v(\tau)\|_{\mathcal{B}(L^2_+)}\,d\tau&\leq\sum_{m>0}|\widehat v(m)|\frac{e^{\rho m}-e^{-\rho m}}{2m} \notag\\
&\leq\frac12\left(\sum_{m=1}^{\infty}\frac{1}{m^2\langle m\rangle^2}\right)^{1/2}\|v_+\|_{\rho,1}=c_1\|v_+\|_{\rho,1}.
\label{eq:W_tau_bound}
\end{align}
Here $\sum_{m=1}^{\infty}\frac{1}{m^2\langle m\rangle^2}=\sum_{m=1}^{\infty}\left(\frac1{m^2}-\frac1{1+m^2}\right)=\frac{\pi^2}{6}-\frac{\pi\coth(\pi)-1}{2}$. This proves (1).

We now establish the intertwining identity. The Volterra equation for $W_v$ yields
\begin{equation*}
e^{-\tau\mathcal{L}_0}W_v(\tau)=e^{-\tau\mathcal{L}_0}-\int_0^\tau e^{-(\tau-s)\mathcal{L}_0}\bigl(e^{-s\mathcal{L}_0}Q_v(s)\bigr)W_v(s)\,ds.
\end{equation*}
for $n\ge0$ and $f\in L_+^2$, direct Fourier calculation gives
\begin{align*}
\widehat{e^{-s\mathcal{L}_0}Q_v(s)f}(n)&=e^{-2sn}\sum_{m\ge0}\widehat v(n-m)e^{2s(n-m)}\widehat f(m)\\
&=\sum_{m\ge0}\widehat v(n-m)e^{-2sm}\widehat f(m)=\widehat{\mathcal{T}_ve^{-s\mathcal{L}_0}f}(n).
\end{align*}
Consequently,
\begin{equation*}
e^{-\tau\mathcal{L}_0}W_v(\tau)=e^{-\tau\mathcal{L}_0}-\int_0^\tau e^{-(\tau-s)\mathcal{L}_0}\mathcal{T}_ve^{-s\mathcal{L}_0}W_v(s)\,ds.
\end{equation*}
By the bounded perturbation theorem for $C_0$-semigroups (see e.g. Chapter 3 of \cite{pazy2012semigroups}), this Volterra equation has the unique solution generated by $-(\mathcal{L}_0+\mathcal{T}_v)=-\mathcal{L}_v$. Hence
\begin{equation*}
e^{-\tau\mathcal{L}_0}W_v(\tau)=e^{-\tau\mathcal{L}_v}
\end{equation*}
in the strong operator topology on $L^2_+$.

We next prove that the positive exponential spectral energy is well-defined. Set $\psi_0(\tau):=e^{\tau\mathcal{L}_0}v_+$. For $0\leq\tau\leq\rho/2$, one has $\psi_0(\tau)\in H^1_+$ and $\|\psi_0(\tau)\|_{H^1}\leq\|v_+\|_{\rho,1}$. Since $W_v(\tau)^{-1}$ preserves $H^1_+$, define
\begin{equation*}
\psi_v(\tau):=W_v(\tau)^{-1}\psi_0(\tau)\in H^1_+.
\end{equation*}
The intertwining identity implies
\begin{equation*}
e^{-\tau\mathcal{L}_v}\psi_v(\tau)=e^{-\tau\mathcal{L}_0}\psi_0(\tau)=v_+.
\end{equation*}
The self-adjoint operator $\mathcal{L}_v$ is bounded from below, so $e^{-\tau\mathcal{L}_v}$ is bounded and injective. The Borel functional calculus therefore identifies its inverse on its range with $e^{\tau\mathcal{L}_v}$. Thus
\begin{equation*}
\psi_v(\tau)=e^{\tau\mathcal{L}_v}v_+.
\end{equation*}
Moreover, $\psi_v(\tau)\in H^1_+=\mathcal{D}(\mathcal{L}_v)$, proving that $\mathcal{E}_\rho(v)$ is finite.

The maps $\psi_0(\tau)$ and $\psi_v(\tau)$ are strongly differentiable in $L^2_+$, with derivatives $\mathcal{L}_0\psi_0(\tau)$ and $\mathcal{L}_v\psi_v(\tau)$, respectively. Since
\begin{equation*}
\psi_0(\tau)=W_v(\tau)\psi_v(\tau),
\end{equation*}
the product rule gives
\begin{equation} \label{eq:exact_vector_identity}
\mathcal{L}_0\psi_0(\tau)=-Q_v(\tau)\psi_0(\tau)+W_v(\tau)\mathcal{L}_v\psi_v(\tau).
\end{equation}
Evaluating at $\tau=\rho/2$ and writing
\begin{equation*}
\psi_0=\psi_0(v),\qquad\eta_0=\eta_0(v),\qquad\psi_v=e^{\frac{\rho}{2}\mathcal{L}_v}v_+,\qquad\eta_v=\frac12\mathcal{L}_v\psi_v,
\end{equation*}
we obtain
\begin{equation*}
\psi_0=W_v\psi_v,\qquad\eta_0=W_v\eta_v-\frac12Q_v\psi_0.
\end{equation*}
Equivalently,
\begin{equation*}
\psi_v=W_v^{-1}\psi_0,
\qquad
\eta_v=W_v^{-1}\left(\eta_0+\frac12Q_v\psi_0\right),
\end{equation*}
which proves the exact spectral-coordinate representation in the statement.

From the preceding identities, we have
\begin{align*}
\|v_+\|_{\rho,1}&=\bigl(\|\psi_0\|_{L^2}^2+\|\eta_0\|_{L^2}^2\bigr)^{1/2}\\
&\leq\|\psi_0\|_{L^2}+\|\eta_0\|_{L^2}\\
&\leq\|W_v\|_{\mathcal{B}(L^2_+)}\bigl(\|\psi_v\|_{L^2}+\|\eta_v\|_{L^2}\bigr)+\frac12\|Q_v\|_{\mathcal{B}(L^2_+)}\|\psi_0\|_{L^2}\\
&\leq\sqrt{2}e^{c_1\|v_+\|_{\rho,1}}\mathcal{E}_\rho(v)^{1/2}+\frac{\sqrt{2}}{2}c_2\|v_+\|_{\rho,1}^2.
\end{align*}
Rearranging yields the lower bound in \eqref{eq:transcendental_static}. Conversely, the exact inverse-coordinate formulas imply
\begin{align*}
\mathcal{E}_\rho(v)^{1/2}&\leq\|\psi_v\|_{L^2}+\|\eta_v\|_{L^2}\\
&\leq\|W_v^{-1}\|_{\mathcal{B}(L^2_+)}\left(\|\psi_0\|_{L^2}+\|\eta_0\|_{L^2}+\frac12\|Q_v\|_{\mathcal{B}(L^2_+)}\|\psi_0\|_{L^2}\right)\\
&\leq e^{c_1\|v_+\|_{\rho,1}}\left(\sqrt{2}\|v_+\|_{\rho,1}+\frac{\sqrt{2}}{2}c_2\|v_+\|_{\rho,1}^2\right),
\end{align*}
which proves the upper bound.
\end{proof}

\begin{corollary}
\label{cor:regularized_trapping}
If $u^\kappa\in C(I;H^{\rho,1}_0)$ is a trajectory governed by the regularized $H_\kappa$ flow, then
\begin{equation*}
\mathcal{E}_\rho(u^\kappa(t))=\mathcal{E}_\rho(u^\kappa(0))\qquad\text{for every }t\in I.
\end{equation*}
Define $f(x):=\frac{1}{\sqrt{2}}e^{-c_1x}\left(x-\frac{\sqrt{2}}{2}c_2x^2\right)$, and let $x_{max}=\frac{2}{c_1+\sqrt{2}c+\sqrt{c_1^2+2c_2^2}}$ be the unique maximizer of $f$ on the interval where $f$ is positive, with $A_{max}:=f(x_{max})$. If
\begin{equation} \label{eq:small_energy_barrier}
\mathcal{E}_\rho(u^\kappa(0))^{1/2}:=A<A_{max},\qquad\|u^\kappa_+(0)\|_{\rho,1}\leq x_{max},
\end{equation}
then the equation $f(x)=A$ has a unique smaller root $x_1\in[0,x_{max})$, and
\begin{equation*}
    \sup_{t\in I}\|u^\kappa_+(t)\|_{\rho,1}\leq x_1<x_{max}.
\end{equation*}
\end{corollary}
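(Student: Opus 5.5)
The plan has two parts: conservation of $\mathcal{E}_\rho$, then a continuity (bootstrap) argument driven by the lower bound in \eqref{eq:transcendental_static}.

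\textbf{Conservation.} For each $t\in I$, let $\mu_t$ denote the spectral measure of $\mathcal{L}_{u^\kappa(t)}$ associated with the vector $u^\kappa_+(t)$. The Borel functional calculus gives
\[
\mathcal{E}_\rho(u^\kappa(t))=\int_{\mathbb{R}}\Bigl(1+\tfrac{\lambda^2}{4}\Bigr)e^{\rho\lambda}\,d\mu_t(\lambda).
\]
By \autoref{prop:spectral_measure_invariance}, $\mu_t=\mu_0$ for all $t\in I$. The integrand is a fixed nonnegative Borel function, so the integral is independent of $t$. Finiteness of the integral follows from \autoref{thm:small_data_global}(2), since $u^\kappa(t)\in H^{\rho,1}_0$. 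One point needs checking: \autoref{prop:spectral_measure_invariance} requires the hypothesis $\kappa>100C_s\|u\|_{\rho,s}$ along the trajectory. I will take this as implicit in the phrase ``trajectory governed by the regularized $H_\kappa$ flow'', since Section 5 will enforce it.

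\textbf{Shape of $f$.} Next I record the elementary facts about $f$. It vanishes at $0$ and at $x_0=\sqrt2/c_2$, and it is positive in between. Computing $f'$ reduces the critical point equation to the quadratic
\[
\tfrac{\sqrt2}{2}c_1c_2x^2-(c_1+\sqrt2c_2)x+1=0.
\]
Its smaller root is $x_{max}$, written in rationalized form as in the statement (the ``$c$'' there should read $c_2$). On $[0,x_{max}]$, $f$ is strictly increasing from $0$ to $A_{max}$. Hence for $A<A_{max}$ there is a unique root $x_1\in[0,x_{max})$ of $f(x)=A$, and $f(x)>A$ for $x\in(x_1,x_{max}]$.

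\textbf{Trapping by continuity.} By \eqref{eq:transcendental_static}, $f(\|u^\kappa_+(t)\|_{\rho,1})\le\mathcal{E}_\rho(u^\kappa(t))^{1/2}=A$ for every $t\in I$. The map $g(t):=\|u^\kappa_+(t)\|_{\rho,1}$ is continuous, because $u^\kappa\in C(I;H^{\rho,1}_0)$ and $C_+$ is bounded. At $t=0$ we have $g(0)\le x_{max}$ and $f(g(0))\le A$, so $g(0)\le x_1$. Suppose $g(t)>x_1$ for some $t$. Since $I$ is an interval, the intermediate value theorem gives a time $t'$ with $g(t')\in(x_1,x_{max}]$, and there $f(g(t'))>A$, a contradiction. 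Therefore $g(t)\le x_1$ on all of $I$.

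\textbf{Main obstacle.} The main difficulty is justifying the conservation step rigorously. One must confirm that the spectral measure in \autoref{prop:spectral_measure_invariance} is the same object used in defining $\mathcal{E}_\rho$. One must also confirm that integrating the unbounded weight $e^{\rho\lambda}$ against equal measures is legitimate. This follows by monotone convergence with truncated weights $\min(e^{\rho\lambda},N)$, so I expect it to go through.
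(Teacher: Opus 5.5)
Your proposal is correct and follows the paper's proof essentially step for step: conservation of $\mathcal{E}_\rho$ comes from the invariance of the spectral measure in \autoref{prop:spectral_measure_invariance} via the representation $\int_{\mathbb{R}}e^{\rho\lambda}(1+\lambda^2/4)\,d\mu_t(\lambda)$, followed by the lower bound in \eqref{eq:transcendental_static}, strict monotonicity of $f$ on $[0,x_{max}]$, and a continuity argument in $t$. Your extra details are sound (the critical-point quadratic confirming that the ``$c$'' in $x_{max}$ should be $c_2$, and the explicit intermediate-value step), and the truncation/monotone-convergence step is unnecessary, since equal measures give equal integrals of any nonnegative Borel function.
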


\begin{proof}
By \autoref{prop:spectral_measure_invariance}, the spectral measure $\mu_{u^\kappa(t)}$ associated with the pair $(\mathcal{L}_{u^\kappa(t)},u^\kappa_+(t))$ is independent of $t$. By the Spectral Theorem and the finiteness proved above,
\begin{equation*}
\mathcal{E}_\rho(u^\kappa(t))=\int_{\mathbb{R}}e^{\rho\nu}\left(1+\frac{\nu^2}{4}\right)\,d\mu_{u^\kappa(t)}(\nu).
\end{equation*}
Hence
\begin{equation*}
\mathcal{E}_\rho(u^\kappa(t))=\mathcal{E}_\rho(u^\kappa(0))
\end{equation*}
for all $t\in I$.

Set $x(t):=\|u^\kappa_+(t)\|_{\rho,1}$ and $A:=\mathcal{E}_\rho(u^\kappa(0))^{1/2}$. The lower bound in \eqref{eq:transcendental_static} and conservation give
\begin{equation*}
f(x(t))\leq A
\qquad\text{for every }t\in I.
\end{equation*}
Elementary calculus shows that $f$ is strictly increasing on $[0,x_{max}]$, attains its unique positive maximum $A_{max}$ at $x_{max}$, and is strictly decreasing on $[x_{max},\frac{\sqrt{2}}{c_2}]$. Since $0\le A<A_{max}$, the equation $f(x)=A$ has two roots $x_1<x_{max}<x_2$ on that branch. The initial condition $x(0)\leq x_{max}$ and the inequality $f(x(0))\leq A$ imply $x(0)\leq x_1$. If $x(t)$ exceeded $x_1$ at some time, continuity of $t\mapsto x(t)$ would force it to enter the open interval between the two roots, where $f(x)>A$. This contradicts $f(x(t))\leq A$. Therefore
\begin{equation*}
\sup_{t\in I}x(t)\leq x_1<x_{max},
\end{equation*}
which completes the proof.
\end{proof}

\begin{lemma}
\label{lem:intertwiner_smooth_dependence}
For every $R>0$, the maps $u\longmapsto W_u(\cdot)$ and $u\longmapsto W^{-1}_u(\cdot)$ are real analytic from the ball $B_R(H^{\rho,1}_0)\cap\mathbb{R}$ into $C([0,\rho/2];\mathcal{B}(L^2_+))$. In particular, for $j=1,2$ there exists $C_{j,R}>0$ such that
\begin{align*}
\sup_{0\le\tau\le \rho/2}\|D^jW_u(\tau)[h_1,\ldots,h_j]\|_{\mathcal{B}(L^2_+)}&\le C_{j,R}\prod_{\ell=1}^j\|h_\ell\|_{\rho,1},\\
\sup_{0\le\tau\le \rho/2}\|D^jW^{-1}_u(\tau)[h_1,\ldots,h_j]\|_{\mathcal{B}(L^2_+)}
&\le C_{j,R}\prod_{\ell=1}^j\|h_\ell\|_{\rho,1}.
\end{align*}
Moreover,
\begin{equation} \label{eq:W_near_identity}
\sup_{0\le\tau\le \rho/2}\bigl(\|W_u(\tau)-I\|+\|W^{-1}_u(\tau)-I\|\bigr)\le C_{R}\|u\|_{\rho,1}.
\end{equation}
\end{lemma}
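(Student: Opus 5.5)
The idea is to view $W_u$ as the composition of a bounded linear map $u\mapsto Q_u(\cdot)$ with the solution map of a linear Volterra equation, and to use the Dyson (time-ordered) series to exhibit real analyticity explicitly.

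\textbf{Linearity of $u\mapsto Q_u$.} The map $u\mapsto Q_u(\cdot)$ is real linear and bounded from $H^{\rho,1}_0$ into $C([0,\rho/2];\mathcal{B}(L^2_+))$. Indeed, $\widehat{u_{2\tau}}(n)=e^{2\tau n}\widehat u(n)$, and $|e^{2\tau n}|\le e^{\rho|n|}$ for $\tau\le\rho/2$. Hence, as in the proof of \autoref{thm:small_data_global},
\[
\|Q_u(\tau)\|_{\mathcal{B}(L^2_+)}\le\|u_{2\tau}\|_{L^\infty}\le\sum_n|\widehat u(n)|e^{\rho|n|}\le C_0\|u\|_{\rho,1}.
\]
Norm continuity in $\tau$ follows from dominated convergence, exactly as there. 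So $\sup_\tau\|Q_u(\tau)\|\le C_0\|u\|_{\rho,1}$ with $C_0=(\sum_n\langle n\rangle^{-2})^{1/2}$.

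\textbf{Dyson expansion.} Iterating the Volterra equation $W=I-\int_0^\tau Q W$ gives
\[
W_u(\tau)=\sum_{k\ge0}(-1)^k\int_{0\le s_k\le\cdots\le s_1\le\tau}Q_u(s_1)\cdots Q_u(s_k)\,ds.
\]
The $k$-th term is the restriction to the diagonal of a bounded $k$-linear symmetric-izable map $\Phi_k[h_1,\dots,h_k]$. Its norm is at most $(C_0\rho/2)^k/k!\prod_\ell\|h_\ell\|_{\rho,1}$, uniformly in $\tau$, and it is continuous in $\tau$. The series therefore converges absolutely in $C([0,\rho/2];\mathcal{B}(L^2_+))$ on every ball; the radius of convergence is in fact infinite, so the map is even entire. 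A power series of bounded homogeneous polynomials with this factorial decay is real analytic.

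For $W_u^{-1}$ we use $Z_u$, which solves $\partial_\tau Z=ZQ_u$, with the analogous series $\sum_k\int Q_u(s_k)\cdots Q_u(s_1)$. \autoref{thm:small_data_global} already identifies $Z_u=W_u^{-1}$.

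\textbf{Explicit bounds.} Differentiating the series termwise gives, for $j=1,2$,
\[
\|D^jW_u[h_1,\dots,h_j]\|\le\sum_{k\ge j}\frac{k!}{(k-j)!}\,\frac{(C_0\rho/2)^k}{k!}\,R^{k-j}\prod_\ell\|h_\ell\|_{\rho,1}.
\]
Evaluated at $u$ with $\|u\|\le R$, this is $(C_0\rho/2)^je^{C_0\rho R/2}\prod_\ell\|h_\ell\|$, which yields $C_{j,R}$. Alternatively, one can differentiate the ODE directly: $D W[h]$ solves $\partial_\tau X=-Q_uX-Q_hW_u$ with $X(0)=0$, and Duhamel plus Gronwall give the same bound. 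Finally, \eqref{eq:W_near_identity} follows because $W_u-I$ is the tail $k\ge1$ of the series, bounded by $e^{C_0\rho R/2}-1\le (C_0\rho/2)e^{C_0\rho R/2}\|u\|_{\rho,1}$; the same holds for $Z_u$.

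\textbf{Main obstacle.} The only delicate point is that the Toeplitz bound must be uniform over the whole strip $\tau\in[0,\rho/2]$, which the weight $e^{2\tau n}\le e^{\rho|n|}$ settles. This covers negative frequencies too, where the weight is even smaller. Everything else is standard linear-ODE analytic dependence.
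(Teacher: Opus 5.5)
Your proof is correct and follows the paper's route: analyticity comes from the Dyson series with factorially decaying terms, the derivative bounds come from differentiating the series or the variational ODE, and $W_u-I$ is treated as the tail of the series; the only difference is that you bound each term by $\sup_\tau\|Q_u(\tau)\|\le C_0\|u\|_{\rho,1}$ times the interval length $\rho/2$, rather than by the paper's integrated bound $\alpha(u)=\int_0^{\rho/2}\|Q_u(\tau)\|\,d\tau\le c_1\|u_+\|_{\rho,1}$, so your constants depend on $\rho$, which the statement allows. One slip in the last step: the tail should be bounded by $e^{C_0\rho\|u\|_{\rho,1}/2}-1\le \tfrac{C_0\rho}{2}\|u\|_{\rho,1}e^{C_0\rho R/2}$, since starting from $e^{C_0\rho R/2}-1$ as you wrote does not give a bound linear in $\|u\|_{\rho,1}$.
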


\begin{proof}
We only prove the $W_u(\tau)$ case since $W^{-1}_u(\tau)$ holds via the same argument. The estimates in the proof of \autoref{thm:small_data_global} give
\begin{equation} \label{eq:Q_integrated_linear}
\alpha(h):=\int_0^{\rho/2}\|Q_h(\tau)\|_{\mathcal{B}(L^2_+)}\,d\tau\le c_1\|h_+\|_{\rho,1}.
\end{equation}
Iterating the Volterra equation yields the Dyson series
\begin{equation*}
W_u(\tau)=I+
\sum_{n=1}^{\infty}(-1)^n
\int_{0<s_n<\cdots<s_1<\tau}
Q_u(s_1)\cdots Q_u(s_n)\,ds_n\cdots ds_1.
\end{equation*}
The $n$-th term is bounded uniformly by $\alpha(u)^n/n!$. Hence the series, together with every Fr\'echet derivative in $u$, converges uniformly on bounded subsets of $H^{\rho,1}_0$. This proves that $u\mapsto W_u(\cdot)$ real analytic.

For later use, we record the first two variational equations. The first derivative satisfies
\begin{equation*}
\partial_\tau DW_u[h]=-Q_uDW_u[h]-Q_hW_u,\qquad DW_u[h](0)=0.
\end{equation*}
Consider $W_u^{-1}DW_u[h]$, then
$$\begin{aligned}
\partial_\tau (W_u^{-1}DW_u[h])&=\partial_\tau W_u^{-1}DW_u[h]+W_u^{-1}\partial_\tau DW_u[h]\\
&=-W_u^{-1}Q_hW_u,
\end{aligned}$$
which gives
\begin{align} \label{eq:DW_formula}
W_u^{-1}(\tau)DW_u(\tau)[h]&=-\int_0^\tau W_u(s)^{-1}Q_h(s)W_u(s)\,ds.
\end{align}
Consequently,
$$\begin{aligned}
\sup_\tau\|DW_u(\tau)[h]\|&\le\int_0^\tau \|W_u(\tau)W_u(s)^{-1}\|\|Q_h(s)\|\|W_u(s)\|\,ds\\
&\le e^{\alpha(u)}\alpha(h).
\end{aligned}$$
The second derivative satisfies
\begin{align*}
\partial_\tau D^2W_u[h,k]=-Q_uD^2W_u[h,k]-Q_hDW_u[k]-Q_kDW_u[h],\qquad D^2W_u[h,k](0)={}0,
\end{align*}
the same argument implies
\begin{equation*}
\sup_\tau\|D^2W_u(\tau)[h,k]\|\le 2e^{2\alpha(u)}\alpha(h)\alpha(k).
\end{equation*}
On $B_R(H^{\rho,1}_0)\cap\mathbb{R}$, these estimates and \eqref{eq:Q_integrated_linear} give the asserted bounds for $W_u$. Finally, the Volterra equations for $W_u$ yields
\begin{align*}
\sup_\tau\|W_u(\tau)-I\|\le\sup_\tau\int_0^\tau \|Q_u(s)\|\|W(s)\|\,ds\le e^{\alpha(u)}\alpha(u).
\end{align*}
Using \eqref{eq:Q_integrated_linear} on $B_R(H^{\rho,1}_0)\cap\mathbb{R}$ proves \eqref{eq:W_near_identity}.
\end{proof}

\begin{definition}
Let $\mathcal{Y}:=L^2_+\times L^2_+$ with its product Hilbert norm and $u\in H^{\rho,1}_0\cap\mathbb{R}$, we define the linear free-coordinate map
\begin{equation*}
\mathcal{B}_\rho u
:=\left(B_0u,B_1u\right)
:=\left(e^{\frac{\rho}{2}\mathcal{L}_0}u_+,\frac12\mathcal{L}_0e^{\frac{\rho}{2}\mathcal{L}_0}u_+\right).
\end{equation*}
Thus $\|\mathcal{B}_\rho u\|_{\mathcal{Y}}=\|u_+\|_{\rho,1}$. Define
\begin{equation} \label{eq:Phi_definition}
\Phi_\rho(u):=\left(W_u(\rho/2)^{-1}B_0u,W_u(\rho/2)^{-1}\left(B_1u+\frac12Q_u(\rho/2)B_0u\right)\right).
\end{equation}
Then $\mathcal{E}_\rho(u)=\|\Phi_\rho(u)\|_{\mathcal{Y}}^2$.

\end{definition}

\begin{proposition}
\label{prop:spectral_coordinates_near_identity}
The map $\Phi_\rho:H_0^{\rho,1}\cap\mathbb{R}\to\mathcal{Y}$ is real analytic. For every $R>0$, there exists $C_R>0$ such that, for $u\in B_R(H_0^{\rho,1})$ and $h,k\in H_0^{\rho,1}$,
\begin{align}
\|\Phi_\rho(u)-\mathcal{B}_\rho u\|_{\mathcal{Y}}
&\le C_R\|u\|_{\rho,1}^2, \label{eq:Phi_quadratic}\\
\|D\Phi_\rho(u)[h]-\mathcal{B}_\rho h\|_{\mathcal{Y}}
&\le C_R\|u\|_{\rho,1}\|h\|_{\rho,1}, \label{eq:DPhi_linear_error}\\
\|D^2\Phi_\rho(u)[h,k]\|_{\mathcal{Y}}
&\le C_R\|h\|_{\rho,1}\|k\|_{\rho,1}. \label{eq:D2Phi_bound}
\end{align}
\end{proposition}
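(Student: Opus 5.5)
We write $\Phi_\rho$ as a composition of maps already controlled, and expand around $u=0$. By definition,
\[
\Phi_\rho(u)=\bigl(W_u^{-1}B_0u,\;W_u^{-1}B_1u+\tfrac12W_u^{-1}Q_u(\rho/2)B_0u\bigr),
\]
where $W_u:=W_u(\rho/2)$. The ingredients are as follows.

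\begin{itemize}
\item[(a)] The map $u\mapsto\mathcal{B}_\rho u$ is bounded linear from $H^{\rho,1}_0$ to $\mathcal Y$, with $\|\mathcal{B}_\rho u\|_{\mathcal Y}=\|u_+\|_{\rho,1}\le\|u\|_{\rho,1}$.
\item[(b)] The map $u\mapsto W_u^{-1}$ is real analytic into $\mathcal{B}(L^2_+)$ by \autoref{lem:intertwiner_smooth_dependence}. On $B_R$ its first two derivatives are bounded by $C_{j,R}$, and $\|W_u^{-1}-I\|\le C_R\|u\|_{\rho,1}$.
\item[(c)] The map $u\mapsto Q_u(\rho/2)$ is bounded linear into $\mathcal{B}(L^2_+)$, with $\|Q_h(\rho/2)\|\le\sqrt2c_2\|h\|_{\rho,1}$ by \eqref{eq:Q_bound}.
\end{itemize}

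Since products and compositions of real analytic maps between Banach spaces (with bounded bilinear evaluation $\mathcal B(L^2_+)\times L^2_+\to L^2_+$) are real analytic, $\Phi_\rho$ is real analytic on $H^{\rho,1}_0\cap\mathbb R$.

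For \eqref{eq:Phi_quadratic}, we subtract $\mathcal{B}_\rho u$ to get
\[
\Phi_\rho(u)-\mathcal{B}_\rho u=\bigl((W_u^{-1}-I)B_0u,\;(W_u^{-1}-I)B_1u+\tfrac12W_u^{-1}Q_u(\rho/2)B_0u\bigr).
\]
Each term is a product of two factors, each $O(\|u\|_{\rho,1})$: we use \eqref{eq:W_near_identity} for $W_u^{-1}-I$, (c) for $Q_u$, and $\|W_u^{-1}\|\le e^{c_1R}$ from \autoref{thm:small_data_global}(1). This gives $C_R\|u\|^2_{\rho,1}$.

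For \eqref{eq:DPhi_linear_error}, we differentiate by the product rule:
\[
D\Phi_\rho(u)[h]-\mathcal{B}_\rho h=\bigl((W_u^{-1}-I)B_0h+DW_u^{-1}[h]B_0u,\;(W_u^{-1}-I)B_1h+DW_u^{-1}[h]B_1u+\tfrac12(\cdots)\bigr).
\]
The $(\cdots)$ term is the derivative of the trilinear expression $W_u^{-1}Q_u B_0u$, namely $DW_u^{-1}[h]Q_uB_0u+W_u^{-1}Q_hB_0u+W_u^{-1}Q_uB_0h$. Every term contains exactly one factor linear in $h$ and at least one factor that is $O(\|u\|)$: either $W_u^{-1}-I$, or $B_ju$, or $Q_u$. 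The factor $DW_u^{-1}[h]$ has norm at most $C_{1,R}\|h\|$. Hence the bound is $C_R\|u\|\|h\|$.

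The second derivative \eqref{eq:D2Phi_bound} is obtained the same way. Expanding $D^2$ of each product yields terms of the form $D^2W_u^{-1}[h,k]B_ju$, $DW_u^{-1}[h]B_jk$ (and the symmetric one), $DW_u^{-1}[h]Q_kB_0u$, $W_u^{-1}Q_hB_0k$, and similar. Each is bounded by $C_R\|h\|\|k\|$ using $\|u\|\le R$ and the constants $C_{1,R},C_{2,R}$ from \autoref{lem:intertwiner_smooth_dependence}.

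The only delicate point is in \eqref{eq:DPhi_linear_error}: we must verify that no term linear in $h$ survives without a small factor. This relies on $W_0^{-1}=I$ and $Q_0=0$, so at $u=0$ the derivative $D\Phi_\rho(0)$ is exactly $\mathcal{B}_\rho$. Note also that the estimates in \autoref{lem:intertwiner_smooth_dependence} are stated in terms of $\|h\|_{\rho,1}$ via $\alpha(h)\le c_1\|h_+\|_{\rho,1}$, which suffices. Uniformity over $\tau$ is not needed, since only $\tau=\rho/2$ enters.
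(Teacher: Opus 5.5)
Your proposal is correct and follows essentially the same route as the paper: both expand $\Phi_\rho(u)-\mathcal{B}_\rho u$, $D\Phi_\rho(u)[h]-\mathcal{B}_\rho h$ and $D^2\Phi_\rho(u)[h,k]$ by the product rule. Both then bound each term with the near-identity estimate \eqref{eq:W_near_identity}, the derivative bounds for $W_u(\rho/2)^{-1}$ from \autoref{lem:intertwiner_smooth_dependence}, and the linearity and boundedness of $u\mapsto Q_u(\rho/2)$ and $\mathcal{B}_\rho$. The differences are cosmetic: the paper bounds $W_u(\rho/2)^{-1}-I$ through the integrated equation $\partial_\tau W_u^{-1}=W_u^{-1}Q_u$ instead of quoting \eqref{eq:W_near_identity}, and your explicit expansion of the $Q_u$-terms fills in what the paper leaves as ``subtracting the corresponding components''.
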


\begin{proof}
The endpoint estimate \eqref{eq:Q_bound} reads $\|Q_u(\rho/2)\|_{\mathcal{B}(L^2_+)}\le c_2\|u\|_{\rho,1}$. Recall that $B_0$ and $B_1$ are bounded linear maps on $H_0^{\rho,1}\cap\mathbb{R}\mapsto L_2^+$, together with \autoref{lem:intertwiner_smooth_dependence}, this proves real analyticity of $\Phi_\rho$.

For the first component of \eqref{eq:Phi_quadratic},
\begin{equation*}
W_u(\rho/2)^{-1}B_0u-B_0u=(W_u(\rho/2)^{-1}-I)B_0u,
\end{equation*}
then
\begin{align}\label{eq:bound}
\|(W_u(\rho/2)^{-1}-I)B_0u\|_{L^2}\le\|B_0u\|_{L^2}\int_0^{\rho/2}\|W^{-1}_u(s)\|_{\mathcal{B}(L^2_+)}\|Q_u(s)\|_{\mathcal{B}(L^2_+)}ds\le C_R\|u\|^2_{\rho,1}.
\end{align}
and for the second component,
\begin{equation*}
W_u(\rho/2)^{-1}\left(B_1u+\frac12Q_u(\rho/2)B_0u\right)-B_1u
=(W_u(\rho/2)^{-1}-I)B_1u+\frac12W_u(\rho/2)^{-1}Q_u(\rho/2)B_0u,
\end{equation*}
the same argument gives \eqref{eq:Phi_quadratic}.

For the first derivative \eqref{eq:DPhi_linear_error}, the first component satisfies
\begin{equation*}
D(W_u(\rho/2)^{-1}B_0u)[h]=D(W_u(\rho/2)^{-1})[h]B_0u+W_u(\rho/2)^{-1}B_0h.
\end{equation*}
Then 
$$\|D(W_u(\rho/2)^{-1}B_0u)[h]-W_u(\rho/2)^{-1}B_0h\|_{L^2}\le\|D(W_u(\rho/2)^{-1})[h]B_0u\|_{L^2}\le C_R\|u\|_{\rho,1}\|h\|_{\rho,1}.$$
For the second component, set $F(u):=B_1u+\frac12Q_u(\rho/2)B_0u$. Since $u\mapsto Q_u(\rho/2)$ is linear,
\begin{equation*}
DF(u)[h]
=B_1h+\frac12Q_h(\rho/2)B_0u+\frac12Q_u(\rho/2)B_0h.
\end{equation*}
Thus
\begin{equation*}
D(W_u(\rho/2)^{-1}F(u))[h]=DW_u(\rho/2)^{-1}[h]F(u)+W_u(\rho/2)^{-1}DF(u)[h].
\end{equation*}
Subtracting the corresponding components of $\mathcal{B}_\rho h$ and using \autoref{lem:intertwiner_smooth_dependence}, \eqref{eq:bound} proves \eqref{eq:DPhi_linear_error}.

Finally, we estimate \eqref{eq:D2Phi_bound} as
\begin{equation*}
D^2(W_u(\rho/2)^{-1}B_0u)[h,k]=D^2W_u(\rho/2)^{-1}[h,k]B_0u+DW_u(\rho/2)^{-1}[h]B_0k+DW_u(\rho/2)^{-1}[k]B_0h,
\end{equation*}
and
\begin{equation*}
D^2F(u)[h,k]
=\frac12Q_h(\rho/2)B_0k+\frac12Q_k(\rho/2)B_0h.
\end{equation*}
Consequently,
\begin{align*}
D^2(W_u(\rho/2)^{-1}F(u))[h,k]
={}&D^2W_u(\rho/2)^{-1}[h,k]F(u)+DW_u(\rho/2)^{-1}[h]DF(u)[k]\\
&+DW_u(\rho/2)^{-1}[k]DF(u)[h]+W_u(\rho/2)^{-1}D^2F(u)[h,k].
\end{align*}
The bounds from \autoref{lem:intertwiner_smooth_dependence} give \eqref{eq:D2Phi_bound}.
\end{proof}

\begin{theorem}
\label{thm:spectral_strict_convexity}
There exist $r_*>0$ and $\lambda_*>0$, such that the spectral energy $\mathcal{E}_\rho$ is $C^2$ on $B_{2r_*}(H_0^{\rho,1})\cap\mathbb{R}$ and
\begin{equation} \label{eq:energy_hessian_coercive}
D^2\mathcal{E}_\rho(u)[h,h]
\ge \lambda_*\|h\|_{\rho,1}^2
\end{equation}
for every $u\in B_{2r_*}(H_0^{\rho,1})\cap\mathbb{R}$ and $h\in H_0^{\rho,1}\cap\mathbb{R}$. Hence, for $u,v\in \overline{B}_{\sqrt{2}r_*}(H_0^{\rho,1})\cap\mathbb{R}$,
\begin{equation} \label{eq:strong_convexity_inequality}
\mathcal{E}_\rho(v)
\ge \mathcal{E}_\rho(u)
+D\mathcal{E}_\rho(u)[v-u]
+\frac{\lambda_*}{2}\|v-u\|_{\rho,1}^2.
\end{equation}
In particular, $\mathcal{E}_\rho$ is weakly lower semicontinuous on $\overline{B}_{\sqrt{2}r_*}(H_0^{\rho,1})\cap\mathbb{R}$ and has the following Kadec property there: if
\begin{equation*}
u_n\rightharpoonup u\quad\text{in }H_0^{\rho,1},
\qquad
\mathcal{E}_\rho(u_n)\to\mathcal{E}_\rho(u),
\end{equation*}
with $u_n$, $u\in\overline{B}_{\sqrt{2}r_*}(H_0^{\rho,1})\cap\mathbb{R}$, then $u_n\to u$ strongly in $H_0^{\rho,1}\cap\mathbb{R}$.
\end{theorem}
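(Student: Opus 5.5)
We will work entirely through the representation $\mathcal{E}_\rho(u)=\|\Phi_\rho(u)\|_{\mathcal{Y}}^2$ and the near-identity estimates of \autoref{prop:spectral_coordinates_near_identity}. Since $\Phi_\rho$ is real analytic, $\mathcal{E}_\rho$ is $C^2$ (indeed real analytic) on every ball. Differentiating twice gives
\begin{equation*}
D^2\mathcal{E}_\rho(u)[h,h]=2\|D\Phi_\rho(u)[h]\|_{\mathcal{Y}}^2+2\operatorname{Re}\langle\Phi_\rho(u),D^2\Phi_\rho(u)[h,h]\rangle_{\mathcal{Y}}.
\end{equation*}

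For the first term we use \eqref{eq:DPhi_linear_error}:
\[
\|D\Phi_\rho(u)[h]\|_{\mathcal{Y}}\ge\|\mathcal{B}_\rho h\|_{\mathcal{Y}}-C_R\|u\|_{\rho,1}\|h\|_{\rho,1}.
\]
Because $h$ is real-valued with zero mean, $\widehat h(-n)=\overline{\widehat h(n)}$. Hence $\|h\|_{\rho,1}^2=2\|h_+\|_{\rho,1}^2=2\|\mathcal{B}_\rho h\|_{\mathcal{Y}}^2$, that is, $\|\mathcal{B}_\rho h\|_{\mathcal{Y}}=\|h\|_{\rho,1}/\sqrt2$. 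This step is essential: without the reality constraint $\mathcal{B}_\rho$ would have a kernel, namely the negative frequencies, and coercivity would fail.

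For the second term we combine \eqref{eq:Phi_quadratic} with $\|\mathcal{B}_\rho u\|\le\|u\|_{\rho,1}$ to get $\|\Phi_\rho(u)\|_{\mathcal{Y}}\le\|u\|_{\rho,1}+C_R\|u\|_{\rho,1}^2$. Together with \eqref{eq:D2Phi_bound} this bounds the term by $C'_R\|u\|_{\rho,1}\|h\|_{\rho,1}^2$.

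Now fix $R=1$ and restrict to $\|u\|_{\rho,1}<2r_*\le 1$. We obtain
\[
D^2\mathcal{E}_\rho(u)[h,h]\ge\Big(2\big(\tfrac1{\sqrt2}-2C_1r_*\big)^2-4C'_1r_*\Big)\|h\|_{\rho,1}^2.
\]
Choosing $r_*$ small makes the coefficient at least $\lambda_*:=1/2$, which proves \eqref{eq:energy_hessian_coercive}.

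The convexity inequality \eqref{eq:strong_convexity_inequality} follows from Taylor's formula with integral remainder along the segment $u+\theta(v-u)$. That segment stays inside the convex ball $\overline B_{\sqrt2 r_*}\subset B_{2r_*}$, where the Hessian bound holds:
\[
\mathcal{E}_\rho(v)-\mathcal{E}_\rho(u)-D\mathcal{E}_\rho(u)[v-u]=\int_0^1(1-\theta)D^2\mathcal{E}_\rho(u+\theta(v-u))[v-u,v-u]\,d\theta\ge\tfrac{\lambda_*}{2}\|v-u\|_{\rho,1}^2.
\]

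Weak lower semicontinuity follows from this inequality. If $u_n\rightharpoonup u$, then $D\mathcal{E}_\rho(u)[u_n-u]\to0$, because $D\mathcal{E}_\rho(u)$ is a bounded linear functional on the real Hilbert space. Therefore $\liminf\mathcal{E}_\rho(u_n)\ge\mathcal{E}_\rho(u)$. Membership of the weak limit in the closed ball holds because norm-closed balls are weakly closed.

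For the Kadec property, apply the same inequality with $v=u_n$. It gives
\[
\tfrac{\lambda_*}{2}\|u_n-u\|_{\rho,1}^2\le\mathcal{E}_\rho(u_n)-\mathcal{E}_\rho(u)-D\mathcal{E}_\rho(u)[u_n-u].
\]
Both terms on the right tend to zero, so $u_n\to u$ strongly.

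The main obstacle is the coercivity step. The constants $C_R$ in \autoref{prop:spectral_coordinates_near_identity} are only qualitative, so $r_*$ is not explicit. One also has to check carefully that the linear error in \eqref{eq:DPhi_linear_error} carries a factor $\|u\|_{\rho,1}$ rather than only $\|u_+\|_{\rho,1}$. For real $u$ the two are comparable, since $\|u\|_{\rho,1}=\sqrt2\|u_+\|_{\rho,1}$, so this causes no difficulty.
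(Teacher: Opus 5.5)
Your proposal is correct and follows the paper's proof essentially step by step. The shared steps are the Hessian identity for $\|\Phi_\rho(u)\|_{\mathcal{Y}}^2$, the lower bound on $\|D\Phi_\rho(u)[h]\|_{\mathcal{Y}}$ via \eqref{eq:DPhi_linear_error}, the perturbative control of $\operatorname{Re}\langle\Phi_\rho(u),D^2\Phi_\rho(u)[h,h]\rangle_{\mathcal{Y}}$ via \eqref{eq:Phi_quadratic} and \eqref{eq:D2Phi_bound}, Taylor's formula on segments in $\overline{B}_{\sqrt2 r_*}\subset B_{2r_*}$, and the weak lower semicontinuity and Kadec argument from \eqref{eq:strong_convexity_inequality}. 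Your explicit check that $\|\mathcal{B}_\rho h\|_{\mathcal{Y}}=\|h\|_{\rho,1}/\sqrt2$ for real zero-mean $h$ justifies the factor $\frac{1}{\sqrt2}$ that the paper uses without comment. The paper also shrinks $r_*$ so that $2r_*<x_{\max}$; that is needed later, not for this statement.
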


\begin{proof}
We begin with the fact $\mathcal{E}_\rho(u)=\|\Phi_\rho(u)\|_{\mathcal{Y}}^2$. With the complex inner product interpreted through its real part,
\begin{equation} \label{eq:energy_hessian_formula}
D^2\mathcal{E}_\rho(u)[h,h]
=2\|D\Phi_\rho(u)[h]\|_{\mathcal{Y}}^2+2\operatorname{Re}\langle\Phi_\rho(u),D^2\Phi_\rho(u)[h,h]\rangle_{\mathcal{Y}}. 
\end{equation}
Fix $R_0>0$. By \eqref{eq:DPhi_linear_error}, for $u\in B_{R_0}$,
\begin{equation*}
\|D\Phi_\rho(u)[h]\|_{\mathcal{Y}}
\ge\|\mathcal{B}_\rho h\|_{\mathcal{Y}}-\|D\Phi_\rho(u)[h]-\mathcal{B}_\rho h\|_{\mathcal{Y}}\ge (\frac{1}{\sqrt{2}}-C_{R_0}\|u\|_{\rho,1})\|h\|_{\rho,1}.
\end{equation*}
Moreover, \eqref{eq:Phi_quadratic} and \eqref{eq:D2Phi_bound} imply
\begin{align*}
\operatorname{Re}\langle\Phi_\rho(u),D^2\Phi_\rho(u)[h,h]\rangle_{\mathcal{Y}}\ge-\|\Phi_\rho(u)\|_{\mathcal{Y}}\|D^2\Phi_\rho(u)[h,h]\|_{\mathcal{Y}}\ge -C_{R_0}(1+C_{R_0}\|u\|_{\rho,1})\|u\|_{\rho,1}\|h\|_{\rho,1}^2.
\end{align*}
Substituting these estimates into \eqref{eq:energy_hessian_formula} and choose $R_0$ small enough such that $\frac{1}{\sqrt{2}}-C_{R_0}\|u\|_{\rho,1}\ge0$, we obtain
\begin{align*}
D^2\mathcal{E}_\rho(u)[h,h]\ge\Bigg[2\left(\frac{1}{\sqrt{2}}-C_{R_0}\|u\|_{\rho,1}\right)^2-2C_{R_0}\left(1+C_{R_0}\|u\|_{\rho,1}\right)\|u\|_{\rho,1}\Bigg]\|h\|_{\rho,1}^2.
\end{align*}
The coefficient in brackets is continuous in $\|u\|_{\rho,1}$ and equals $1$ at the origin. We may therefore choose $r_*>0$ sufficiently small, with $2r_*<\min\{R_0,x_{\max}\}$,and $\lambda_*>0$ so that \eqref{eq:energy_hessian_coercive} holds on $B_{2r_*}$.

For $u,v\in\overline{B}_{\sqrt{2}r_*}$, set $w=v-u$ and $g(s)=\mathcal{E}_\rho(u+sw)$. The segment lies in $\overline{B}_{\sqrt{2}r_*}\subset B_{2r_*}$, and hence
\begin{equation*}
g''(s)\ge\lambda_*\|w\|_{\rho,1}^2.
\end{equation*}
Taylor's formula with integral remainder gives \eqref{eq:strong_convexity_inequality}.

It remains to prove the Kadec property. If $u_n\rightharpoonup u$ in $H_0^{\rho,1}$ with $u_n,u\in\overline{B}_{\sqrt{2}r_*}$, then the continuous linear functional $D\mathcal{E}_\rho(u)$ satisfies
\begin{equation*}
D\mathcal{E}_\rho(u)[u_n-u]\to0.
\end{equation*}
Taking the lower limit in \eqref{eq:strong_convexity_inequality} proves weak lower semicontinuity. If the energies also converge, the same inequality gives
\begin{equation*}
\|u_n-u\|_{\rho,1}^2\to0,
\end{equation*}
which is the Kadec property.
\end{proof}

\begin{remark}
It is instructive to consider whether the geometric rigidification mechanism established in \autoref{thm:small_data_global} can be extended to the quasi-periodic regime. For a quasi-periodic potential governed by a rationally independent frequency vector $\omega \in \mathbb{R}^\nu$ ($\nu \ge 2$), the spatial frequencies transition to the dense module $\{m \cdot \omega : m \in \mathbb{Z}^\nu\}$.

In this setting, evaluating the fractional decay generated by the Lax intertwining operator across the complexified analytic strip necessitates the convergence of the sum:
\begin{equation*}
\sum_{m \in \mathbb{Z}^\nu \setminus \{0\}} \frac{1}{(m \cdot \omega)^2 \big( 1 + (m \cdot \omega)^2 \big)}.
\end{equation*}

Due to the small divisor problem, the inner products $m \cdot \omega$ accumulate close to zero. Even if a Diophantine condition is imposed, the singularity $(m \cdot \omega)^{-2}$ introduces polynomial growth, rendering the infinite summation divergent for $\nu \ge 2$. 

This divergence reflects the spectral properties of the Lax operator. In the periodic setting, the free Lax operator restricted to nonnagative frequencies possesses a positive lower bound, providing a spectral gap. In contrast, under standard Diophantine conditions, the spectrum of the Lax operator for the quasi-periodic BO equation is devoid of spectral gaps. This gapless structure invalidates the static operator integration mechanism and precludes the application of classical inverse spectral machinery. For instance, the global inverse spectral methods utilized by Damanik and Goldstein \cite{damanik2016existence} for the quasi-periodic KdV equation rely on the presence of a gap structure in the spectrum of the Schr\"odinger operator. Because the quasi-periodic BO Lax operator lacks these gaps, such inverse spectral techniques are inapplicable. Although significant progress has been made in the local well-posedness of initial values for quasi-periodic systems in the recent papers of Papenburg \cite{papenburg2025local}\cite{papenburg2025benjamin}, establishing the global well-posedness for the quasi-periodic BO equation remains an open problem.
\end{remark}

\section{Convergence and Global Well-Posedness via Commuting Flows}

In this section, we establish the global well-posedness of the \eqref{eq:BO}. To evaluate the dynamics and circumvent the spatial derivative loss inherent to standard Picard iteration schemes, we utilize regularized commuting flows. For a spectral parameter $\kappa > 0$, we define the regularized Hamiltonian $H_\kappa$ as:
\begin{equation}
H_\kappa(u) = \frac{1}{2}\left( \kappa P(u) - \kappa^2 \beta(\kappa; u) \right),
\end{equation}
where $P(u) = \frac{1}{2} \langle u, u \rangle_{L^2}$ is the normalized, conserved physical $L^2$-momentum. Because $H_\kappa$ is constructed from the conserved spectral quantities of the Lax hierarchy (as established in \autoref{lemma:H_kappa_properties}), it Poisson-commutes with the BO Hamiltonian. Following the Hamiltonian equations of motion $\partial_t u = -\partial_x \nabla_u H_\kappa$ and evaluating the $L^2$-gradient computed in \autoref{prop:generating_function}, the evolution of $u$ under the regularized flow is governed by:
\begin{equation} \label{eq:V_kappa_def}
\partial_t u = V_\kappa(u) := -\frac{\kappa}{2} \partial_x u + \frac{\kappa^2}{2} \partial_x \Big( m(\kappa, u) + \overline{m}(\kappa, u) - |m(\kappa, u)|^2 \Big).
\end{equation}

We demonstrate the algebraic cancellation in $V_\kappa(u)$, which recovers the BO vector field as $\kappa \to \infty$.

Using the resolvent identity $m = \frac{1}{\kappa} u_+ - \frac{1}{\kappa} R(\kappa, u) \mathcal{L}_u u_+$, we obtain the algebraic decomposition:
\begin{equation}\label{e.q.m_identity}
m(\kappa, u) = \frac{1}{\kappa} u_+ - \frac{1}{\kappa^2} \mathcal{L}_u u_+ + \frac{1}{\kappa^2} R(\kappa, u) \mathcal{L}_u^2 u_+.
\end{equation}
By the defining equation $(\mathcal{L}_u + \kappa)m = u_+$, we have the relation $\mathcal{L}_u m = u_+ - \kappa m = R(\kappa, u) \mathcal{L}_u u_+$. Since the potential $u$ is real-valued ($\overline{u_+} = u_-$), the modulus square $|m|^2$ expands as:
\begin{align}
|m(\kappa, u)|^2 &= \Big(\frac{1}{\kappa} u_+ - \frac{1}{\kappa} \mathcal{L}_u m\Big)\Big(\frac{1}{\kappa} u_- - \frac{1}{\kappa} \overline{\mathcal{L}_u m}\Big) \nonumber \\
&= \frac{1}{\kappa^2} u_+ u_- - \frac{1}{\kappa^2} \Big( u_+ \overline{R(\kappa, u)\mathcal{L}_u u_+} + u_- R(\kappa, u)\mathcal{L}_u u_+ - |R(\kappa, u)\mathcal{L}_u u_+|^2 \Big).
\end{align}
Summing these components and apply \eqref{e.q.m_identity}, the effective gradient evaluates to:
\begin{equation} \label{eq:gradient_expansion}
m + \overline{m} - |m|^2 = \frac{1}{\kappa}u - \frac{1}{\kappa^2}\Big( \mathcal{L}_u u_+ + \overline{\mathcal{L}_u u_+} + u_+ u_- \Big) + r_\kappa(u),
\end{equation}
where the nonlinear remainder $r_\kappa(u)$ is isolated as:
\begin{align} \label{eq:r_kappa_expansion}
r_\kappa(u) = \frac{1}{\kappa^2} \Big( &R(\kappa, u)\mathcal{L}_u^2 u_+ + \overline{R(\kappa, u)\mathcal{L}_u^2 u_+} + u_+ \overline{R(\kappa, u)\mathcal{L}_u u_+} \nonumber \\
&+ u_- R(\kappa, u)\mathcal{L}_u u_+ - |R(\kappa, u)\mathcal{L}_u u_+|^2 \Big).
\end{align}

For the $\mathcal{O}(\kappa^{-2})$ bracket, substituting $\mathcal{L}_u u_+ = -2i\partial_x u_+ + C_+(uu_+)$ and utilizing the Hilbert transform identity $\mathcal{H}u = -i(u_+ - u_-)$ evaluates
\begin{align*}
\mathcal{L}_u u_+ + \overline{\mathcal{L}_u u_+} + u_+ u_-&=-2i\partial_x u_+ + C_+(uu_+)-\overline{2i\partial_x u_+}+\overline{C_+(uu_+)}+ u_+ u_-\\
&=2\mathcal{H}\partial_xu+ C_+(uu_+)+C_-(uu_-)+P_0(u_+u_-)+ u_+ u_-\\
&=2\mathcal{H}\partial_xu+u_+^2+u_-^2+(C_++C_-+I)(u_+u_-)+P_0(u_+u_-)\\
&=2\mathcal{H}\partial_xu+u^2+P_0(u_+u_-).
\end{align*}

Substituting this gradient back into the Hamiltonian vector field $V_\kappa(u)$, the leading-order divergence is eliminated by the term $\frac{\kappa^2}{2} \partial_x (\frac{1}{\kappa} u)$. Furthermore, the constant $P_0(u_+u_-)$ vanishes under the spatial derivative $\partial_x$:
\begin{align} \label{eq:vector_field_cancellation}
V_\kappa(u) &= -\frac{\kappa}{2} \partial_x u + \frac{\kappa^2}{2} \partial_x \left[ \frac{1}{\kappa} u - \frac{1}{\kappa^2} \Big( 2\mathcal{H}\partial_x u + u^2 \Big) + r_\kappa(u) \right] \nonumber \\
&= -\mathcal{H}\partial_{xx} u - \frac{1}{2}\partial_x (u^2) + \mathcal{R}_\kappa(u) := V_{BO}(u) + \mathcal{R}_\kappa(u).
\end{align}
This cancellation confirms that the regularized flow recovers the BO vector field. 

\begin{proposition}
\label{prop:uniform_residual}
Let $\rho>0$, $M>0$, and $\epsilon\in(0,\rho/3)$. There exist $\kappa_0=\kappa_0(M)>0$ and $C_{M,\rho,\epsilon}>0$ such that, for every real-valued $u\in H^{\rho,1}_0$ satisfying $\|u\|_{\rho,1}\le M$, and every $\kappa\ge\kappa_0$, one has
\begin{equation}
\label{eq:uniform_residual}
\|\mathcal R_\kappa(u)\|_{\rho-3\epsilon,1}\le \frac{C_{M,\rho,\epsilon}}{\kappa}.
\end{equation}
\end{proposition}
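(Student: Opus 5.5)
The plan is to write $\mathcal R_\kappa(u)$ explicitly from \eqref{eq:vector_field_cancellation} and \eqref{eq:r_kappa_expansion}, and then to estimate each term by spending one unit $\epsilon$ of analytic width per derivative. Comparing the two expressions for $V_\kappa(u)$, the prefactor $\frac{\kappa^2}{2}$ cancels the $\frac1{\kappa^2}$ in $r_\kappa$, so
\[
\mathcal R_\kappa(u)=\tfrac12\partial_x\Big(g+\overline{g}+u_+\overline{k}+u_-k-|k|^2\Big),\qquad k:=R(\kappa,u)\mathcal L_uu_+,\quad g:=R(\kappa,u)\mathcal L_u^2u_+ .
\]
The claim therefore reduces to showing that $g$, $k$ and the products above are $O(1/\kappa)$ in $H^{\rho-2\epsilon,1}$. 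The final $\partial_x$ is then absorbed by \autoref{lemma2.3}, which loses a further $\epsilon$ and contributes a factor $1/(\epsilon e)$.

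\textbf{Step 1 (derivative costs).} For $0\le\rho'\le\rho$ we have $\|u\|_{\rho',1}\le\|u\|_{\rho,1}\le M$, and $\|C_\pm f\|_{\rho',1}\le\|f\|_{\rho',1}$. Writing $\mathcal L_u f=-2i\partial_xf+C_+(uf)$, \autoref{lemma2.3} and \autoref{lemma2.1} give
\[
\|\mathcal L_uf\|_{\rho'-\epsilon,1}\le\Big(\tfrac{2}{\epsilon e}+C_1M\Big)\|f\|_{\rho',1}.
\]
Applying this once gives $\|\mathcal L_uu_+\|_{\rho-\epsilon,1}\le C_{M,\epsilon}M$. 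Applying it twice gives $\|\mathcal L_u^2u_+\|_{\rho-2\epsilon,1}\le C_{M,\epsilon}^2M$.

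\textbf{Step 2 (resolvent gain).} Take $\kappa_0:=200C_1M$. Apply \autoref{proposition3.1} in the space $H^{\rho',1}_+$ with $\rho'\in\{\rho-\epsilon,\rho-2\epsilon\}$, using $\|u\|_{\rho',1}\le M$. This gives $\|R(\kappa,u)\|_{\mathcal B(H^{\rho',1}_+)}\le 2/\kappa$ for all $\kappa\ge\kappa_0$. Here $\kappa_0$ depends only on $M$, since $C_1$ is independent of $\rho$. Hence
\[
\|k\|_{\rho-\epsilon,1}\le \tfrac{2}{\kappa}C_{M,\epsilon}M,\qquad \|g\|_{\rho-2\epsilon,1}\le \tfrac{2}{\kappa}C_{M,\epsilon}^2M .
\]

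\textbf{Step 3 (products and conjugates).} Complex conjugation is an isometry of $H^{\rho',1}$, because the weight depends only on $|n|$. By \autoref{lemma2.1} in $H^{\rho-\epsilon,1}\subset H^{\rho-2\epsilon,1}$, we get $\|u_\pm k\|\lesssim M\cdot\kappa^{-1}$ and $\||k|^2\|\lesssim\kappa^{-2}\le\kappa^{-1}\kappa_0^{-1}$. Summing these bounds and applying \autoref{lemma2.3} from $\rho-2\epsilon$ to $\rho-3\epsilon$ yields \eqref{eq:uniform_residual}, with a constant depending on $M$, $\epsilon$ (and $\rho$ only through the constraint $\epsilon<\rho/3$).

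\textbf{Main obstacle.} The delicate point is not the estimates but the legitimacy of the algebraic identities \eqref{e.q.m_identity} and \eqref{eq:gradient_expansion} when $u$ is only in $H^{\rho,1}$, since $\mathcal L_u^2u_+$ need not lie in $L^2$-type domains at width $\rho$. I would handle this by regarding all operators on the scale $H^{\rho',1}_+$. By Neumann-series uniqueness, $R(\kappa,u)$ is the same operator on each $H^{\rho',1}_+$, as these spaces are nested. Then $u_+\in H^{\rho,1}_+\subset H^{\rho-\epsilon,2}_+=\mathcal D_1(\mathcal L_u)$ in width $\rho-\epsilon$, so $R\mathcal L_uu_+=u_+-\kappa m$ holds in $H^{\rho-\epsilon,1}$. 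Likewise $\mathcal L_uu_+\in H^{\rho-2\epsilon,2}_+$, which justifies $R\mathcal L_u^2u_+=\mathcal L_uu_+-\kappa R\mathcal L_uu_+$ in $H^{\rho-2\epsilon,1}$. With these identities, the expansion of $V_\kappa$ is an equality in $H^{\rho-3\epsilon,1}$, and the residual is exactly the expression displayed above.
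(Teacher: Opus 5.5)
Your proposal is correct and follows the paper's proof essentially step for step. Both arguments iterate the scale bound $\|\mathcal L_uf\|_{\rho'-\epsilon,1}\lesssim\|f\|_{\rho',1}$ twice and gain $1/\kappa$ from \autoref{proposition3.1}. Both then apply the Banach-algebra estimate to $A_\kappa+\overline{A_\kappa}+u_+\overline{B_\kappa}+u_-B_\kappa-|B_\kappa|^2$ and spend the third $\epsilon$ on the final $\partial_x$ via \autoref{lemma2.3}. Your extra care in applying the resolvent on the nested scale $H^{\rho',1}_+$, where you justify $R(\kappa,u)\mathcal L_uu_+=u_+-\kappa m$ and the analogous identity for $\mathcal L_u^2u_+$, fills in details the paper leaves implicit without changing the argument.
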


\begin{proof}
We first record the action of the Lax operator across the analytic scale, \autoref{lemma2.1} and \autoref{lemma2.3} give
\begin{align}
\|\mathcal L_u f\|_{\rho-\epsilon,1}&\le2\|\partial_xf\|_{\rho-\epsilon,1}+\|C_+(uf)\|_{\rho-\epsilon,1} \nonumber\\
&\le C_{M,\rho,\epsilon}\|f\|_{\rho,1}.
\label{eq:Lax_scale_bound}
\end{align}
Since $\|u_+\|_{\rho,1}\le M$, iterating
\eqref{eq:Lax_scale_bound} yields
\begin{equation}
\label{eq:Lax_iterated_bounds}
\|\mathcal L_uu_+\|_{\rho-\epsilon,1}\le C_{M,\rho,\epsilon}M,\qquad\|\mathcal L_u^2u_+\|_{\rho-2\epsilon,1}\le C_{M,\rho,\epsilon}^2M.
\end{equation}

Introduce
\[
A_\kappa(u):=R(\kappa,u)\mathcal L_u^2u_+,\qquad B_\kappa(u):=R(\kappa,u)\mathcal L_uu_+.
\]
\autoref{proposition3.1} and \eqref{eq:Lax_iterated_bounds} imply
$$\|A_\kappa(u)\|_{\rho-2\epsilon,1}\le\frac{2C_{M,\rho,\epsilon}^2M}{\kappa},\qquad\|B_\kappa(u)\|_{\rho-2\epsilon,1}\le\frac{2C_{M,\rho,\epsilon}M}{\kappa}.$$
By \eqref{eq:r_kappa_expansion},
$\kappa^2r_\kappa(u)=A_\kappa+\overline{A_\kappa}+u_+\overline{B_\kappa}+u_-B_\kappa-|B_\kappa|^2$. Thus the Banach algebra estimate gives
\begin{align}\label{eq:r_kappa_full_bound}
\|\kappa^2r_\kappa(u)\|_{\rho-2\epsilon,1}\le \frac{C_{M,\rho,\epsilon}}{\kappa}.
\end{align}
Finally, using $\mathcal R_\kappa(u)=\frac12\partial_x\bigl(\kappa^2r_\kappa(u)\bigr)$, we obtain
\begin{align*}
\|\mathcal R_\kappa(u)\|_{\rho-3\epsilon,1}&\le C_{\epsilon}\|\kappa^2r_\kappa(u)\|_{\rho-2\epsilon,1}\\
&\le\frac{C_{M,\rho,\epsilon}}{\kappa}.
\end{align*}
This proves \eqref{eq:uniform_residual}.
\end{proof}

To apply Picard iteration in the analytic Sobolev space, we establish that the linear propagator forms a strongly continuous group.

\begin{lemma}
\label{lemma:strong_continuity_linear}
For any $\kappa > 0$, the spatial translation operator $S(t) = e^{-\frac{\kappa}{2} t \partial_x}$ is an isometry on $H^{\rho, 1}_0$. Furthermore, for any $f \in H^{\rho, 1}_0$, the map $t \mapsto S(t)f$ is strongly continuous in the analytic topology:
\begin{equation}
\lim_{h \to 0} ||S(t+h)f - S(t)f||_{\rho, 1} = 0.
\end{equation}
\end{lemma}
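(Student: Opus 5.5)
The plan is to work entirely on the Fourier side, since $S(t)$ is a Fourier multiplier. The generator $-\frac{\kappa}{2}\partial_x$ has symbol $-\frac{i\kappa n}{2}$, so
\[
\widehat{S(t)f}(n)=e^{-\frac{i\kappa n t}{2}}\widehat f(n).
\]
Equivalently, $S(t)f(x)=f\bigl(x-\tfrac{\kappa t}{2}\bigr)$, which is a spatial translation.

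\textbf{Isometry.} Because $|e^{-i\kappa n t/2}|=1$, every weighted term $\langle n\rangle^{2}|\widehat f(n)|^2e^{2\rho|n|}$ is unchanged. Summing over $n\neq0$ gives $\|S(t)f\|_{\rho,1}=\|f\|_{\rho,1}$. The zero mode stays zero, so $S(t)$ maps $H^{\rho,1}_0$ to itself. In addition, $S(t)S(s)=S(t+s)$ and $S(-t)=S(t)^{-1}$, so the family is a group of isometries.

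\textbf{Strong continuity.} The group property gives $S(t+h)f-S(t)f=S(t)\bigl(S(h)f-f\bigr)$. The isometry therefore reduces the claim to showing $\|S(h)f-f\|_{\rho,1}\to0$ as $h\to0$. We write
\[
\|S(h)f-f\|_{\rho,1}^2=\sum_{n\ne0}\bigl|e^{-i\kappa nh/2}-1\bigr|^2\langle n\rangle^2|\widehat f(n)|^2e^{2\rho|n|}.
\]
Each summand tends to $0$ as $h\to0$. Each is also dominated by $4\langle n\rangle^2|\widehat f(n)|^2e^{2\rho|n|}$, which is summable because $f\in H^{\rho,1}_0$. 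Dominated convergence for series then gives the limit $0$.

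If a hand-made argument is preferred to dominated convergence, we split the sum at a frequency $N$. For $|n|>N$ we use the crude bound $4\sum_{|n|>N}(\cdots)$, which is small for large $N$. For $|n|\le N$ we use $|e^{-i\kappa nh/2}-1|\le \kappa N|h|/2$.

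No step is a genuine obstacle. The one point to get right is that the exponential weight $e^{2\rho|n|}$ rules out any pointwise-in-$x$ argument about translations, so the argument must stay on the Fourier side. There, the unimodular multiplier commutes with all the weights.
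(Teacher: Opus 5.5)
Your proposal is correct and follows essentially the same route as the paper: both treat $S(t)$ as the unimodular Fourier multiplier $e^{-i\kappa tn/2}$, get the isometry from the fact that it commutes with the weights, and prove strong continuity by dominated convergence on $\sum_{n\ne0}|e^{-i\kappa hn/2}-1|^2\langle n\rangle^2|\widehat f(n)|^2e^{2\rho|n|}$. Your reduction to $t=0$ via the group property, and the optional frequency-splitting argument, are harmless extra details.
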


\begin{proof}
The linear propagator acts on the Fourier coefficients by $\widehat{S(t)f}(n) = e^{-i \frac{\kappa}{2} t n} \hat{f}(n)$. Since the complex exponential has modulus $1$, it commutes with the Sobolev weight, preserving the $H^{\rho, 1}_0$ norm.
To prove strong continuity, we evaluate the difference:
$$||S(t+h)f - S(t)f||_{\rho, 1}^2 = \sum_{n \ne 0} \Big| e^{-i \frac{\kappa}{2} h n} - 1 \Big|^2 \langle n \rangle^2 |\hat{f}(n)|^2 e^{2\rho|n|}.$$
For any fixed $n$, the phase difference tends to $0$ as $h \to 0$. The sequence is bounded by $4 \langle n \rangle^2 |\hat{f}(n)|^2 e^{2\rho|n|}$, which is summable for $f \in H^{\rho, 1}_0$. By the Lebesgue dominated convergence theorem, the limit passes inside the summation, establishing strong continuity in $H^{\rho, 1}_0$.
\end{proof}

\begin{theorem}
\label{thm:H_kappa_flow}
Let $\kappa$ be a sufficiently large, fixed spectral parameter. For any real-valued initial data $u_0 \in H^{\rho, 1}_0$ satisfying the smallness condition \eqref{eq:small_energy_barrier}, the regularized $H_\kappa$ flow admits a unique global solution $u^\kappa \in C(\mathbb{R}; H^{\rho, 1}_0)$. The domain of analyticity is preserved globally, and the spectral energy $\mathcal{E}_\rho(u^\kappa)$ is conserved in $H^{\rho, 1}_0$.
\end{theorem}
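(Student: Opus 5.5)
The plan is a contraction-mapping construction of a local solution, followed by a continuation argument driven by the conservation law and the trapping estimate of \autoref{cor:regularized_trapping}.

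\textbf{Step 1: Duhamel form.} Split the vector field \eqref{eq:V_kappa_def} as $V_\kappa(u)=-\frac{\kappa}{2}\partial_xu+N_\kappa(u)$, where
\[
N_\kappa(u)=\frac{\kappa^2}{2}\partial_x\bigl(m+\overline m-|m|^2\bigr).
\]
The derivative falling on $m$ is removed with the algebraic identity \eqref{eq:m_derivative_algebraic}:
\[
\partial_x m=-\frac{i}{2}\bigl(\kappa m+C_+(u(m-1))\bigr),
\]
together with its complex conjugate, which handles $\partial_x\overline m$. The product rule also gives $\partial_x|m|^2=\partial_x m\,\overline m+m\,\partial_x\overline m$. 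Hence $N_\kappa(u)$ is a finite sum of products of $u$, $m$ and $\overline m$ with $C_\pm$ applied. By \autoref{lemma2.1} and \autoref{corollary3.1}, $N_\kappa$ maps the ball $\{\|u\|_{\rho,1}\le M\}$ into $H^{\rho,1}_0$ with no loss of width. The zero mode vanishes because $N_\kappa$ is an exact derivative.

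\textbf{Step 2: $N_\kappa$ is locally Lipschitz.} On the ball with $\kappa>100C_1M$, I will show $N_\kappa$ is Lipschitz in $H^{\rho,1}_0$. The key input is the second resolvent identity of \autoref{prop:generating_function}:
\[
R(\kappa,u)-R(\kappa,v)=-R(\kappa,u)\mathcal T_{u-v}R(\kappa,v).
\]
Combined with the bound of \autoref{proposition3.1}, it gives
\[
\|m(\kappa,u)-m(\kappa,v)\|_{\rho,1}\le C_{\kappa,M}\|u-v\|_{\rho,1}.
\]
Since the translation group $S(t)$ of \autoref{lemma:strong_continuity_linear} is an isometric $C_0$-group on $H^{\rho,1}_0$, the Duhamel map
\[
u\mapsto S(t)u_0+\int_0^tS(t-s)N_\kappa(u(s))\,ds
\]
is a contraction on $C([-T,T];\{\|u\|_{\rho,1}\le M\})$ for $T=T(\kappa,M)$ small. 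This yields a unique local mild solution $u^\kappa\in C([-T,T];H^{\rho,1}_0)$. Because the field is real-valued ($m+\overline m-|m|^2$ is real), the solution stays real-valued.

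\textbf{Step 3: classical regularity.} Lemma~\ref{lemma:H_kappa_properties} and the spectral-measure argument are stated for classical solutions, so I must first check that $u^\kappa$ is one. I would show that $H^{\rho',1}$ regularity propagates for every $\rho'$ up to $\rho$. Spatial smoothness follows from $H^{\rho,1}\subset H^\infty$. Time differentiability holds in $H^{\rho-\epsilon,1}_0$, since $\partial_x u$ loses only $\epsilon$ by \autoref{lemma2.3}. This is enough for the resolvent computations in Lemma~\ref{lemma:H_kappa_properties}, which only involve $L^2_+$ pairings with $H^1_+$ objects.

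\textbf{Step 4: globalization.} This step needs a fixed $M$ with $\kappa>100C_1M$ uniformly in time, which is the main obstacle. The trapping estimate controls only $\|u_+\|_{\rho,1}$. For real-valued $u$, however, $u_-=\overline{u_+}$, so
\[
\|u\|_{\rho,1}\le\sqrt2\,\|u_+\|_{\rho,1}\le\sqrt2\,x_{\max}.
\]
Choose $M=\sqrt2x_{\max}$ and take $\kappa$ sufficiently large relative to $M$. On any interval of existence, \autoref{cor:regularized_trapping} then gives conservation of $\mathcal E_\rho$, via the spectral measure conservation of \autoref{prop:spectral_measure_invariance}, and the bound $\|u^\kappa_+(t)\|_{\rho,1}\le x_1<x_{\max}$. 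So the solution never leaves the ball of radius $M$. Since the local existence time $T(\kappa,M)$ is uniform on this ball, the standard continuation argument extends $u^\kappa$ to all of $\mathbb R$. Uniqueness follows from the contraction property.

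The delicate point is that the trapping corollary requires continuity of $t\mapsto\|u^\kappa_+(t)\|_{\rho,1}$, which the contraction construction provides. It also requires the hypothesis $\kappa>100C_s\|u\|$ at every time. I would close the argument by a bootstrap: assume the bound on a maximal subinterval, use trapping to improve it strictly, and conclude by continuity.
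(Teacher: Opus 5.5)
Your proposal is correct and follows essentially the same route as the paper: the Duhamel formulation with the translation group $S(t)$, local Lipschitz bounds for $N$ from the algebraic identity for $\partial_x m$ and the second resolvent identity, a local contraction, and globalization via the trapping bound of \autoref{cor:regularized_trapping} together with $\|u\|_{\rho,1}=\sqrt2\,\|u_+\|_{\rho,1}$ for real data. Your additional checks simply make explicit what the paper's proof leaves implicit: the classical regularity needed to invoke \autoref{lemma:H_kappa_properties}, and fixing $\kappa$ relative to $M=\sqrt2\,x_{\max}$ so that the resolvent condition holds uniformly in time.
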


\begin{proof}
The integral formulation of the equation of motion generated by $H_\kappa$ is:
\begin{equation}\label{e.q.H_kappa}
u^\kappa(t) = e^{-\frac{\kappa}{2} t \partial_x} u_0 + \frac{\kappa^2}{2} \int_0^t e^{-\frac{\kappa}{2}(t-s)\partial_x} N(u^\kappa(s)) ds,
\end{equation}
where the nonlinear vector field is $N(u) = \partial_x \big(m(\kappa, u) + \overline{m}(\kappa, u) - |m(\kappa, u)|^2\big)$.

To apply Picard iteration in $H^{\rho, 1}_0\cap\mathbb{R}$, we establish that the vector field $N(u)$ is locally Lipschitz. By \autoref{corollary3.1}, the spatial derivative of the gauge variable is expressed through bounded multilinear operations in $H^{\rho, 1}_0\cap\mathbb{R}$:
\begin{equation} \label{eq:dx_m_algebra}
\partial_x m(u) = -\frac{i}{2} \Big( \kappa m(u) + C_+ \big( u m(u) - u \big) \Big).
\end{equation}

For potentials in a fixed ball
\begin{equation*}
B_R:=\{u\in H^{\rho,1}_0\cap\mathbb{R}:\ \|u\|_{\rho,1}\le R\},
\end{equation*}
choose $\kappa$ so large that $\kappa>100C_1R$. The resolvent estimate gives
\begin{equation} \label{eq:m_uniform_bound}
\sup_{u\in B_R}\|m(u)\|_{\rho,1}
\le \frac{R}{\kappa-C_1R}=:M_R.
\end{equation}
By the resolvent identity,
\begin{equation*}
R(\kappa,u)-R(\kappa,v)=-R(\kappa,u)\mathcal{T}_{u-v}R(\kappa,v),
\end{equation*}
and hence
\begin{equation} \label{eq:m_lipschitz}
\|m(u)-m(v)\|_{\rho,1}\le\|R(\kappa,u)(u_+-v_+)\|_{\rho,1}+\|(R(\kappa,u)-R(\kappa,v))v_+\|_{\rho,1}\le L_m\|u-v\|_{\rho,1},
\end{equation}
where $L_m:=\frac{1}{\kappa-C_1R}
+\frac{C_1R}{(\kappa-C_1R)^2}$. The algebraic identity \eqref{eq:dx_m_algebra} then yields
\begin{align*}
\|\partial_xm(u)\|_{\rho,1}&\le \frac12\bigl(\kappa M_R+C_1R(M_R+1)\bigr)=:D_R,\\\|\partial_xm(u)-\partial_xm(v)\|_{\rho,1}
&\le \frac12\bigl(\kappa L_m+C_1(RL_m+M_R+1)\bigr)\|u-v\|_{\rho,1}\\
&=:L_{\partial m}\|u-v\|_{\rho,1}.
\end{align*}
Since
\begin{equation*}
\partial_x|m|^2=(\partial_xm)\overline m+m\,\overline{\partial_xm},
\end{equation*}
the Banach algebra estimate gives
\begin{equation*}
\|\partial_x|m(u)|^2-\partial_x|m(v)|^2\|_{\rho,1}
\le 2C_1\bigl(D_RL_m+M_RL_{\partial m}\bigr)
\|u-v\|_{\rho,1}.
\end{equation*}
Together with the corresponding estimates for $\partial_xm$ and its conjugate, this proves that $N$ is locally Lipschitz on $B_R$.

Since $S(t)=e^{-\frac{\kappa}{2}t\partial_x}$ is a strongly continuous group of isometries by \autoref{lemma:strong_continuity_linear} and $N$ is locally Lipschitz on $H^{\rho,1}_0\cap\mathbb{R}$, the standard contraction argument for semilinear evolution equations applied to \eqref{e.q.H_kappa} yields a unique local solution $u^\kappa \in C([-T^*, T^*]; H^{\rho, 1}_0)$.

Under the smallness condition \eqref{eq:small_energy_barrier}, $\|u^\kappa_+(0)\|_{\rho, 1} \le x_{max}$. Since $u^\kappa \in C((-T^*,T^*);H^{\rho,1}_0)$, the evaluation $t \mapsto \|u^\kappa_+(t)\|_{\rho, 1}$ is continuous. By \autoref{cor:regularized_trapping}, we obtain the global bound:
\begin{equation} \label{eq:native_uniform_bound}
\sup_{|t|<T^*}\|u^\kappa_+(t)\|_{\rho,1}\le x_1,\qquad\sup_{|t|<T^*}\|u^\kappa(t)\|_{\rho,1}\le\sqrt{2}x_1.
\end{equation}
This bound in both time directions prevents finite-time blow-up in $H^{\rho, 1}_0$, ensuring that $T^* = \infty$.
\end{proof}

Denote $M=\sqrt{2}x_1$, then for sufficiently large $\kappa$, $\sup_{t\in\mathbb{R}}\|u^\kappa(t)\|_{\rho,1}\le M$. By Cauchy-Schwarz and the exponential Fourier weight:
\begin{equation} \label{eq:derivative_bound}
||\partial_x u^\kappa(t)||_{L^\infty} \le \sum_{n \ne 0} |n| |\hat{u}^\kappa(n, t)| \le \sum_{n \ne 0} \frac{|n|}{\langle n \rangle} e^{-\rho|n|} \Big( \langle n \rangle |\hat{u}^\kappa(n, t)| e^{\rho|n|} \Big) \le C_\rho ||u^\kappa(t)||_{\rho, 1} \le C_{M}.
\end{equation}

To demonstrate that the family of trajectories $\{u^\kappa(t)\}_{\kappa \ge \kappa_0}$ forms a Cauchy family as $\kappa \to \infty$, we evaluate their convergence in the $L^2$ topology. The difference $w = u^\kappa - u^\nu$ for parameters $\kappa, \nu \ge \kappa_0$ is governed by:
\begin{equation} \label{eq:w_evolution}
\partial_t w = -\mathcal{H}\partial_{xx} w - \frac{1}{2}\partial_x \big( (u^\kappa + u^\nu) w \big) + \big( \mathcal{R}_\kappa(u^\kappa) - \mathcal{R}_\nu(u^\nu) \big).
\end{equation}

Taking the real part of the $L^2$ inner product of \eqref{eq:w_evolution} with $w$, the linear dispersive operator evaluates to zero due to skew-adjointness. Integration by parts transfers the derivative to the bounded potentials \eqref{eq:derivative_bound}:
$$-\frac{1}{2} \operatorname{Re} \left\langle w, \partial_x \big( (u^\kappa + u^\nu) w \big) \right\rangle_{L^2} = -\frac{1}{4} \frac{1}{2\pi} \int_{\mathbb{T}} w^2 \partial_x (u^\kappa + u^\nu) \, dx \le \frac{1}{4} \left\|\partial_x (u^\kappa + u^\nu)\right\|_{L^\infty} ||w||_{L^2}^2 \le \frac{C_M}{2} ||w||_{L^2}^2.$$
Applying the Cauchy-Schwarz inequality to the residual term yields:
$$\langle w,\mathcal{R}_\kappa(u^\kappa) - \mathcal{R}_\nu(u^\nu)\rangle_{L^2}\le\|w\|_{L^2}\left(\|\mathcal{R}_\kappa(u^\kappa)\|_{L^2}+\|\mathcal{R}_\nu(u^\nu)\|_{L^2}\right)\le C_{M,\rho,\epsilon} \left( \frac{1}{\kappa} + \frac{1}{\nu} \right) ||w||_{L^2}.$$
Thus we obtain
$$\begin{aligned}
\frac{1}{2} \frac{d}{dt} ||w||_{L^2}^2 &\le \frac{C_M}{2} ||w||_{L^2}^2 + C_{M,\rho,\epsilon}  \left( \frac{1}{\kappa} + \frac{1}{\nu} \right) ||w||_{L^2}\\
&\le \frac{C_M}{2} ||w||_{L^2}^2 + C_{M,\rho,\epsilon}\left( \frac{1}{\kappa} + \frac{1}{\nu} \right)^2+ C_{M,\rho,\epsilon}||w||^2_{L^2}\\
&\le C_{M,\rho,\epsilon}\|w\|^2_{L^2}+C_{M,\rho,\epsilon}\left( \frac{1}{\kappa} + \frac{1}{\nu} \right)^2.
\end{aligned}$$
Since $w(0) = 0$, Gr\"onwall's lemma confirms that for any finite time $T > 0$ and uniformly for $t \in [-T, T]$:
\begin{equation} \label{eq:L2_cauchy}
||u^\kappa(t) - u^\nu(t)||_{L^2} \le C_{T, M,\rho, \epsilon} \left( \frac{1}{\kappa} + \frac{1}{\nu} \right) \to 0 \quad \text{as} \quad \kappa, \nu \to \infty.
\end{equation}

We upgrade this $L^2$ convergence to the target space $H_0^{\rho - \epsilon, 1}$ by balancing the static loss $\epsilon \in (0, \rho/3)$ with a frequency truncation threshold $J$.

\begin{theorem}
\label{thm:analytic_convergence}
Under the hypotheses of \autoref{thm:H_kappa_flow}, let $u^\kappa$ denote the regularized trajectories with the same initial datum $u_0$. Then for every $\rho > 0$, $T>0$, and $\epsilon \in (0, \rho/3)$. The trajectories $\{u^\kappa(t)\}_{\kappa \ge \kappa_0}$ form a Cauchy family in the space $C([-T, T]; H_0^{\rho-\epsilon, 1})$.
\end{theorem}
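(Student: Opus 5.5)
The strategy is to interpolate between the $L^2$ Cauchy estimate \eqref{eq:L2_cauchy} and the uniform bound $\sup_{t\in\mathbb{R}}\|u^\kappa(t)\|_{\rho,1}\le M=\sqrt2 x_1$, which holds for all $\kappa\ge\kappa_0$ by \autoref{thm:H_kappa_flow} and \autoref{cor:regularized_trapping}. Set $w=u^\kappa-u^\nu$. The uniform bound gives $\|w(t)\|_{\rho,1}\le 2M$ for all $t$. We fix a frequency threshold $J\ge1$ and split the $H^{\rho-\epsilon,1}_0$ norm into low and high frequencies:
\begin{equation*}
\|w(t)\|_{\rho-\epsilon,1}^2=\sum_{0<|n|\le J}\langle n\rangle^2e^{2(\rho-\epsilon)|n|}|\hat w(t,n)|^2+\sum_{|n|>J}\langle n\rangle^2e^{2\rho|n|}e^{-2\epsilon|n|}|\hat w(t,n)|^2 .
\end{equation*}

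For the high-frequency part, we bound $e^{-2\epsilon|n|}\le e^{-2\epsilon J}$ and obtain at most $e^{-2\epsilon J}\|w(t)\|_{\rho,1}^2\le 4M^2e^{-2\epsilon J}$. This holds uniformly in $t\in\mathbb{R}$ and in $\kappa,\nu\ge\kappa_0$. For the low-frequency part, we bound the weight by $\langle J\rangle^2e^{2(\rho-\epsilon)J}$ and use Plancherel to get at most $\langle J\rangle^2e^{2\rho J}\|w(t)\|_{L^2}^2$. By \eqref{eq:L2_cauchy}, this is at most $\langle J\rangle^2e^{2\rho J}C_{T,M,\rho,\epsilon}^2(\kappa^{-1}+\nu^{-1})^2$, uniformly for $t\in[-T,T]$.

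Given $\delta>0$, we first choose $J$ so that $4M^2e^{-2\epsilon J}<\delta^2/2$. With $J$ fixed, we then choose $K\ge\kappa_0$ so that the low-frequency bound is below $\delta^2/2$ whenever $\kappa,\nu\ge K$. This yields $\sup_{|t|\le T}\|u^\kappa(t)-u^\nu(t)\|_{\rho-\epsilon,1}<\delta$, which is the Cauchy property in $C([-T,T];H^{\rho-\epsilon,1}_0)$. Each $u^\kappa$ lies in $C([-T,T];H^{\rho,1}_0)\subset C([-T,T];H^{\rho-\epsilon,1}_0)$ since the embedding is continuous, so the family lives in the correct space. An explicit rate would follow from choosing $J\sim\frac{1}{\rho+\epsilon}\log(\kappa\wedge\nu)$, but qualitative convergence suffices here.

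The main point to check is that the constants are uniform in $\kappa$. The bound $M$ must not depend on $\kappa$, and the constant in \eqref{eq:L2_cauchy} must be uniform in $\kappa,\nu\ge\kappa_0$. The first holds because the trapping argument of \autoref{cor:regularized_trapping} uses only $\mathcal{E}_\rho(u_0)$ and $\|u_{0,+}\|_{\rho,1}$, which do not depend on $\kappa$. The second holds because \autoref{prop:uniform_residual} is uniform for $\|u\|_{\rho,1}\le M$ and $\kappa\ge\kappa_0(M)$. We also need $\kappa_0$ large enough that $\kappa>100C_1\cdot 2M$ along the whole trajectory, so the resolvent in \autoref{proposition3.1} remains defined globally. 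Given these, the proof reduces to the elementary interpolation above.
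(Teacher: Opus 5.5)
Your proposal is correct and takes essentially the same route as the paper. You split the $H^{\rho-\epsilon,1}_0$ norm at a threshold $J$, bound the tail by $e^{-2\epsilon J}\|w(t)\|_{\rho,1}^2\le 8x_1^2e^{-2\epsilon J}$ using the $\kappa$-independent trapping bound, bound the low modes by $\langle J\rangle^2e^{2(\rho-\epsilon)J}\|w(t)\|_{L^2}^2$ via \eqref{eq:L2_cauchy}, and choose $J$ before $K$; your cruder weight $e^{2\rho J}$ and your remarks on the $\kappa$-uniformity of the constants are harmless additions.
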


\begin{proof}
Let $w(t) = u^\kappa(t) - u^\nu(t)$. We evaluate the norm in $H_0^{\rho-\epsilon, 1}$ by splitting the frequency lattice:
$$||w||_{\rho-\epsilon, 1}^2 = \sum_{0<|n| \le J} \langle n \rangle^2 |\hat{w}(n,t)|^2 e^{2(\rho-\epsilon)|n|} + \sum_{|n| > J} \langle n \rangle^2 |\hat{w}(n,t)|^2 e^{2(\rho-\epsilon)|n|}.$$
For the high-frequency tail, we use the positive-frequency trapping bound. Factoring out the penalty weight $e^{-2\epsilon |n|}$ yields:
$$\sum_{|n| > J} \langle n \rangle^2 |\hat{w}(n, t)|^2 e^{2(\rho-\epsilon)|n|} \le e^{-2\epsilon J} \sum_{|n| > J} \langle n \rangle^2 |\hat{w}(n, t)|^2 e^{2\rho|n|} \le e^{-2\epsilon J} ||w(t)||_{\rho, 1}^2.$$
By \eqref{eq:native_uniform_bound}, the high-frequency contribution is bounded by $8x_1^2 e^{-2\epsilon J}$, which decays to zero as $J \to \infty$, independent of $\kappa$ and $\nu$. 

For the low-frequency part, we bound the exponential weight:
$$\sum_{0<|n| \le J} \langle n \rangle^2 |\hat{w}(n, t)|^2 e^{2(\rho-\epsilon)|n|} \le \langle J \rangle^2 e^{2(\rho-\epsilon) J} \sum_{0<|n| \le J} |\hat{w}(n, t)|^2 \le \langle J \rangle^2 e^{2(\rho-\epsilon) J} ||w(t)||_{L^2}^2.$$

To establish the Cauchy property in $C([-T, T]; H_0^{\rho-\epsilon, 1})$, let $\eta > 0$ be given. We select an integer $J = J(\eta, \epsilon, x_1)$ sufficiently large such that $8x_1^2 e^{-2\epsilon J}<\eta^2/2$. With $J$ fixed, \eqref{eq:L2_cauchy} shows that the family $\{u^\kappa\}_{\kappa \ge \kappa_0}$ is Cauchy in $L^2_0(\mathbb{T})$ on $[-T, T]$. There exists $K = K(J, \eta) > 0$ such that for all $\kappa, \nu >K$ and $t \in [-T, T]$, $\langle J \rangle^2 e^{2(\rho-\epsilon) J}||w(t)||_{L^2}^2 < \eta^2 /2$. 

Summing the components yields $||u^\kappa(t) - u^\nu(t)||_{\rho-\epsilon, 1}^2 < \eta^2$. Taking the supremum over $t \in [-T, T]$ establishes that $\{u^\kappa\}_{\kappa \ge \kappa_0}$ is a Cauchy family in $C([-T, T]; H_0^{\rho-\epsilon, 1})$.
\end{proof}

\begin{proposition}
\label{prop:weak_limit_BO_solution}
Assume the regularized initial data $u_0\in H^{\rho,1}_0\cap\mathbb{R}$ satisfy the smallness condition \eqref{eq:small_energy_barrier}, and let $x_1$ be the stable root given by \autoref{cor:regularized_trapping}. For every $T>0$ and $\epsilon\in(0,\rho/3)$, $u^\kappa$ converge, as $\kappa\to\infty$, to a unique solution $u$ satisfying
\begin{equation} \label{eq:weak_limit_solution_class}
u\in C([-T,T];H^{\rho-\epsilon,1}_0)\cap C^1([-T,T];H^{\rho-3\epsilon,1}_0).
\end{equation}
Moreover,
\begin{equation} \label{eq:limit_endpoint_bound}
\|u_+(t)\|_{\rho,1}\le x_1,\qquad\|u(t)\|_{\rho,1}\le\sqrt{2}x_1
\end{equation}
for every $t\in\mathbb{R}$.
\end{proposition}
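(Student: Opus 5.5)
The plan is to take the limit $u$ furnished by \autoref{thm:analytic_convergence}, identify it as a solution of \eqref{eq:BO}, and then recover the endpoint bounds by weak compactness.

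\textbf{Continuity in $H^{\rho-\epsilon,1}_0$.} Fix $T>0$ and $\epsilon\in(0,\rho/3)$. By \autoref{thm:analytic_convergence}, $\{u^\kappa\}_{\kappa\ge\kappa_0}$ is Cauchy in $C([-T,T];H^{\rho-\epsilon,1}_0)$. Let $u$ be its limit; then $u\in C([-T,T];H^{\rho-\epsilon,1}_0)$. The limits obtained for different $\epsilon$ and $T$ coincide, because all of them agree with the $L^2$ limit from \eqref{eq:L2_cauchy}. Hence $u$ is a single function defined for all $t\in\mathbb{R}$.

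\textbf{Passing to the limit in the equation.} I use the Duhamel-free integral form
\[
u^\kappa(t)=u_0+\int_0^t\bigl(V_{BO}(u^\kappa(s))+\mathcal{R}_\kappa(u^\kappa(s))\bigr)\,ds,
\]
which follows from \eqref{eq:vector_field_cancellation}. By \autoref{lemma2.3} applied twice, $\mathcal{H}\partial_{xx}$ maps $H^{\rho-\epsilon,1}$ boundedly into $H^{\rho-3\epsilon,1}$. By \autoref{lemma2.1} and \autoref{lemma2.3}, $\partial_x(u^2)$ is locally Lipschitz from $H^{\rho-\epsilon,1}$ into $H^{\rho-2\epsilon,1}\subset H^{\rho-3\epsilon,1}$. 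Consequently $V_{BO}(u^\kappa)\to V_{BO}(u)$ in $C([-T,T];H^{\rho-3\epsilon,1}_0)$. \autoref{prop:uniform_residual}, applied with $M=\sqrt2 x_1$ and the uniform bound \eqref{eq:native_uniform_bound}, gives $\sup_t\|\mathcal{R}_\kappa(u^\kappa)\|_{\rho-3\epsilon,1}\le C/\kappa\to0$. Letting $\kappa\to\infty$ therefore yields
\[
u(t)=u_0+\int_0^tV_{BO}(u(s))\,ds,
\]
with continuous integrand in $H^{\rho-3\epsilon,1}_0$. This gives $u\in C^1([-T,T];H^{\rho-3\epsilon,1}_0)$ and shows that $u$ solves \eqref{eq:BO}, which proves \eqref{eq:weak_limit_solution_class}.

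\textbf{Uniqueness.} Suppose $u$ and $v$ are two solutions in this class with the same data, both bounded in $H^{\rho,1}$ (or merely in $H^{\rho-\epsilon,1}$). I would rerun the $L^2$ energy estimate leading to \eqref{eq:L2_cauchy} with the residual term set to zero. Here $\|\partial_x(u+v)\|_{L^\infty}$ is bounded by the computation in \eqref{eq:derivative_bound}, with $\rho-\epsilon$ in place of $\rho$. Grönwall's lemma then gives $\|u-v\|_{L^2}\equiv0$.

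\textbf{Endpoint bounds \eqref{eq:limit_endpoint_bound}.} I regard this as the main point, since the convergence only holds in the weaker topology $H^{\rho-\epsilon,1}$. Fix $t$. By \eqref{eq:native_uniform_bound}, the family $u^\kappa(t)$ is bounded in the Hilbert space $H^{\rho,1}_0$ with $\|u^\kappa_+(t)\|_{\rho,1}\le x_1$, so some subsequence converges weakly in $H^{\rho,1}_0$. The weak limit must equal $u(t)$, because each Fourier coefficient converges, by the $L^2$ convergence. Weak lower semicontinuity of the norm, applied to the weakly continuous projection $C_+$, gives $\|u_+(t)\|_{\rho,1}\le x_1$. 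Since $u$ is real-valued, the Fourier coefficients satisfy $\hat u(-n)=\overline{\hat u(n)}$, so $\|u(t)\|^2_{\rho,1}=2\|u_+(t)\|^2_{\rho,1}$ (the zero mode vanishes). This yields $\|u(t)\|_{\rho,1}\le\sqrt2x_1$. Alternatively, one can argue with Fatou's lemma directly on the Fourier series, which avoids extracting subsequences. Since $T$ was arbitrary, the bounds hold for every $t\in\mathbb{R}$.
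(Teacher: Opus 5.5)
Your proposal is correct and follows essentially the same route as the paper. Both arguments take the limit supplied by \autoref{thm:analytic_convergence} and pass to the limit in the integral equation, using continuity of $V_{BO}\colon H^{\rho-\epsilon,1}_0\to H^{\rho-3\epsilon,1}_0$ together with \autoref{prop:uniform_residual}. Both prove uniqueness through the $L^2$ energy estimate with $\|\partial_x(u+v)\|_{L^\infty}$ controlled by the $H^{\rho-\epsilon,1}$ norm.

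For the endpoint bound, the paper uses Fatou's lemma on the Fourier coefficients, which is your stated alternative, rather than weak compactness. Your identity $\|u\|_{\rho,1}^2=2\|u_+\|_{\rho,1}^2$ for real zero-mean $u$ simply makes explicit the second bound in \eqref{eq:limit_endpoint_bound}, which the paper leaves implicit.
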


\begin{proof}
By \autoref{thm:analytic_convergence}, for each $T>0$ and $\epsilon\in(0,\rho/3)$ there exists $u\in C([-T,T];H^{\rho-\epsilon,1}_0)$ such that
\begin{equation} \label{eq:regularized_lower_radius_limit}
u^\kappa\to u
\quad\text{in }C([-T,T];H^{\rho-\epsilon,1}_0).
\end{equation}

To show the uniqueness, let $u,v$ be the solution of \eqref{eq:weak_limit_solution_class} with the same initial datum, and set $w=u-v$, then
\begin{equation*}
\partial_tw=-\mathcal{H}\partial_{xx}w
-\frac12\partial_x((u+v)w).
\end{equation*}
Taking the real $L^2$ inner product with $w$, using skew-adjointness of $\mathcal{H}\partial_{xx}$ and integration by parts, gives
\begin{equation} \label{eq:weak_uniqueness_energy}
\frac12\frac{d}{dt}\|w\|_{L^2}^2
=-\frac14\frac{1}{2\pi}\int_{\mathbb{T}}
\partial_x(u+v)w^2\,dx.
\end{equation}
For $g\in H^{\rho-\epsilon,1}$,
\begin{equation*}
\|\partial_xg\|_{L^\infty}
\le \left(\sum_{n\ne0}\frac{n^2}{1+n^2}e^{-2(\rho-\epsilon)|n|}\right)^{1/2}\|g\|_{\rho-\epsilon,1}.
\end{equation*}
Thus $\|\partial_x(u+v)\|_{L^\infty}$ is bounded on $[-T,T]$. Gronwall's inequality applied to \eqref{eq:weak_uniqueness_energy} proves uniqueness.

The regularized evolution is
\begin{equation*}
\partial_tu^\kappa=V_{BO}(u^\kappa)+\mathcal{R}_\kappa(u^\kappa),\qquad V_{BO}(v)=-\mathcal{H}\partial_{xx}v-\frac12\partial_x(v^2).
\end{equation*}
The Cauchy estimates and the Banach algebra property show that
\begin{equation*}
V_{BO}:H^{\rho-\epsilon,1}_0\longrightarrow H^{\rho-3\epsilon,1}_0
\end{equation*}
is continuous. Hence
\begin{equation*}
V_{BO}(u^\kappa)\to V_{BO}(u)
\quad\text{in }C([-T,T];H^{\rho-3\epsilon,1}_0).
\end{equation*}
The uniform residual estimate \eqref{eq:uniform_residual}, applied on the ball supplied by \eqref{eq:native_uniform_bound}, gives
\begin{equation*}
\mathcal{R}_\kappa(u^\kappa)\to0
\quad\text{in }C([-T,T];H^{\rho-3\epsilon,1}_0).
\end{equation*}
Let $\kappa\to\infty$ in the integral equation $u^\kappa(t)=u_0+\int_0^t[V_{BO}(u^\kappa(s))+\mathcal{R}_\kappa(u^\kappa(s))]ds$, then
$$u(t)=u_0+\int_0^tV_{BO}(u(s))ds,$$
thus $u$ belongs to the $C^1([-T,T],H^{\rho-3\epsilon,1}_0)$ and solves \eqref{eq:BO}.

Finally, for every fixed $t$ and every $n$, \eqref{eq:regularized_lower_radius_limit} implies
\begin{equation*}
\widehat{u^\kappa}(n,t)\to\widehat u(n,t).
\end{equation*}
Fatou's lemma and \eqref{eq:native_uniform_bound} yield
\begin{align*}
\|u_+(t)\|_{\rho,1}^2&\le\liminf_{\kappa\to\infty}\|u^\kappa_+(t)\|_{\rho,1}^2\le x_1^2.
\end{align*}
\end{proof}

\begin{lemma}
\label{lem:endpoint_weak_continuity}
Under the hypotheses of \autoref{prop:weak_limit_BO_solution},
\begin{equation*}
u^\kappa(t)\rightharpoonup u(t)\quad\text{in }H_0^{\rho,1}
\end{equation*}
for every $t\in\mathbb{R}$, and
\begin{equation}\label{eq:CW}
u\in C_w(\mathbb{R};H_0^{\rho,1}).
\end{equation}
\end{lemma}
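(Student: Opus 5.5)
The plan is to combine the uniform endpoint bound \eqref{eq:native_uniform_bound} with the strong convergence in the weaker topology from \autoref{thm:analytic_convergence}. In a Hilbert space, bounded sequences are weakly precompact, and the weak limit is pinned down by Fourier coefficients. Weak continuity in time will then follow from the strong continuity of $u$ in $H^{\rho-\epsilon,1}_0$ together with the endpoint bound \eqref{eq:limit_endpoint_bound}.

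\emph{Step 1: pointwise weak convergence.} Fix $t\in\mathbb{R}$ and choose $T>|t|$. By \eqref{eq:native_uniform_bound}, $\|u^\kappa(t)\|_{\rho,1}\le\sqrt{2}x_1$ uniformly in $\kappa\ge\kappa_0$. By \eqref{eq:regularized_lower_radius_limit}, each Fourier coefficient satisfies $\widehat{u^\kappa}(n,t)\to\widehat u(n,t)$. By \eqref{eq:limit_endpoint_bound}, $u(t)\in H^{\rho,1}_0$. Pair against an arbitrary $\phi\in H^{\rho,1}_0$ and split frequencies at a cutoff $J$.
\begin{itemize}
\item The modes with $|n|\le J$ converge by coefficientwise convergence.
\item The tail is bounded by Cauchy--Schwarz by $2\sqrt2 x_1\bigl(\sum_{|n|>J}\langle n\rangle^2e^{2\rho|n|}|\hat\phi(n)|^2\bigr)^{1/2}$, which is small uniformly in $\kappa$.
\end{itemize}
Hence $\langle u^\kappa(t),\phi\rangle_{\rho,1}\to\langle u(t),\phi\rangle_{\rho,1}$, which gives $u^\kappa(t)\rightharpoonup u(t)$. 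Equivalently, every subsequence has a further weakly convergent subsequence, and every such limit must equal $u(t)$, so the whole family converges weakly.

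\emph{Step 2: weak continuity.} Let $t_j\to t$. We have the uniform bound $\|u(t_j)\|_{\rho,1}\le\sqrt2 x_1$ from \eqref{eq:limit_endpoint_bound}. Moreover, $u(t_j)\to u(t)$ strongly in $H^{\rho-\epsilon,1}_0$, because \autoref{prop:weak_limit_BO_solution} gives $u\in C([-T,T];H^{\rho-\epsilon,1}_0)$. It follows that the Fourier coefficients converge. The same truncation argument as in Step 1, with the tail controlled by the uniform bound, yields $\langle u(t_j),\phi\rangle_{\rho,1}\to\langle u(t),\phi\rangle_{\rho,1}$ for every $\phi\in H^{\rho,1}_0$. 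This proves \eqref{eq:CW}.

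\emph{Main obstacle.} The only delicate point is that \eqref{eq:limit_endpoint_bound} is stated for $\|u(t)\|_{\rho,1}$ with the full function. The Fatou argument in \autoref{prop:weak_limit_BO_solution} is written only for $u_+$. I would handle $u_-$ by reality: since $u$ is real-valued, $\widehat u(-n)=\overline{\widehat u(n)}$, so $\|u\|_{\rho,1}^2=2\|u_+\|_{\rho,1}^2$ and the $u_+$ bound suffices. Once this is in place, the remaining estimates are uniform in $\kappa$ and in $t$ on compact intervals, and the argument is routine.
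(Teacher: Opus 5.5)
Your proposal is correct and follows the paper's argument. Step 2 is the same Fourier-truncation argument, with the tail controlled by the uniform endpoint bound and the low modes controlled by convergence in $H^{\rho-\epsilon,1}_0$. In Step 1 the paper instead extracts a weakly convergent subsequence and identifies its limit through the embedding $H^{\rho,1}_0\hookrightarrow H^{\rho-\epsilon,1}_0$; you reuse the truncation argument directly, which also avoids passing to sequences in the continuous parameter $\kappa$. Your reality remark, $\|u\|_{\rho,1}=\sqrt2\,\|u_+\|_{\rho,1}$ for real zero-mean $u$, correctly supplies a step the paper leaves implicit.
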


\begin{proof}
For fixed $t$, the sequence $u^\kappa(t)$ is bounded in $H_0^{\rho,1}$, this gives a weak convergent subsequence $u^{\kappa_j}\rightharpoonup v$. The continuous embedding $H_0^{\rho,1}\hookrightarrow H^{\rho-\epsilon,1}_0$ shows that the same subsequence converges weakly to $v$ in $H^{\rho-\epsilon,1}_0$. By \eqref{eq:regularized_lower_radius_limit}, it converges strongly to $u(t)$, so $v=u(t)$. Thus as $\kappa\to\infty$,
$$u^\kappa(t)\rightharpoonup u(t)\quad\text{in }H_0^{\rho,1}.$$

To prove weak time continuity, let $t_j\to t$ and $\varphi\in H_0^{\rho,1}$. Let $P_N$ denote the Fourier projection onto $0<|n|\le N$. The endpoint bound gives
\begin{align*}
|\langle u(t_j)-u(t),\varphi\rangle_{\rho,1}|
&\le|\langle P_N(u(t_j)-u(t)),P_N\varphi\rangle_{\rho,1}|+|\langle(I-P_N)(u(t_j)-u(t)),(I-P_N)\varphi\rangle_{\rho,1}|\\
&\le|\langle P_N(u(t_j)-u(t)),P_N\varphi\rangle_{\rho,1}|+2\sqrt{2}x_1\|(I-P_N)\varphi\|_{\rho,1}.
\end{align*}
Choose $N$ so that $2\sqrt{2}x_1\|(I-P_N)\varphi\|_{\rho,1}\le\frac{\delta}{2}$ for small $\delta$. For fixed $N$, 
$$\|P_N(u(t_j)-u(t))\|_{\rho,1}\le e^{2\epsilon N}\|u(t_j)-u(t)\|_{\rho-\epsilon,1}\to0,$$ which makes the first term tend to zero. This proves \eqref{eq:CW}.
\end{proof}

\begin{proposition}
\label{prop:limit_spectral_energy_conservation}
Assume the stronger smallness condition \eqref{eq:main_small_data}. Then the limiting BO solution satisfies
\begin{equation} \label{eq:limit_energy_conservation}
\mathcal{E}_\rho(u(t))=\mathcal{E}_\rho(u_0)
\qquad\text{for every }t\in\mathbb{R}.
\end{equation}
\end{proposition}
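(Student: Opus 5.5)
We aim to show $\mathcal{E}_\rho(u(t))=\mathcal{E}_\rho(u_0)$ by combining conservation along the regularized flows with the weak lower semicontinuity of \autoref{thm:spectral_strict_convexity}, and then using time reversibility to obtain the reverse inequality.

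\textbf{Step 1: one inequality from weak lower semicontinuity.} By \autoref{cor:regularized_trapping} and \autoref{thm:H_kappa_flow}, each regularized trajectory satisfies $\mathcal{E}_\rho(u^\kappa(t))=\mathcal{E}_\rho(u_0)$ for all $t$, since all $u^\kappa$ start from the same datum $u_0\in\Omega_*$. Because $\mathcal{E}_\rho(u_0)^{1/2}<A_*=f(r_*)$ and $\|u_{0,+}\|_{\rho,1}<x_{\max}$, the trapping argument gives
\[
\|u^\kappa_+(t)\|_{\rho,1}\le X(u_0)<r_*,
\]
and hence $\|u^\kappa(t)\|_{\rho,1}\le\sqrt2 X(u_0)<\sqrt2 r_*$. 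Here we use that $u^\kappa$ is real, so $\|u\|_{\rho,1}^2=2\|u_+\|_{\rho,1}^2$ for zero-mean $u$. Consequently $u^\kappa(t)$ lies in $\overline{B}_{\sqrt2 r_*}$. By Fatou's lemma (as in \autoref{prop:weak_limit_BO_solution}), the limit $u(t)$ lies in the same ball. \autoref{lem:endpoint_weak_continuity} gives $u^\kappa(t)\rightharpoonup u(t)$ in $H^{\rho,1}_0$. Weak lower semicontinuity in \autoref{thm:spectral_strict_convexity} then yields
\[
\mathcal{E}_\rho(u(t))\le\liminf_{\kappa\to\infty}\mathcal{E}_\rho(u^\kappa(t))=\mathcal{E}_\rho(u_0)\qquad\text{for every }t.
\]

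\textbf{Step 2: the reverse inequality via time reversal.} Fix $t_0$ and restart the construction at $t_0$ with datum $u(t_0)$. By Step 1, $\mathcal{E}_\rho(u(t_0))^{1/2}\le\mathcal{E}_\rho(u_0)^{1/2}<A_*$, and $\|u_+(t_0)\|_{\rho,1}\le X(u_0)<x_{\max}$, so $u(t_0)\in\Omega_*$. The regularized flows from $u(t_0)$ therefore exist globally and converge to a BO solution $\tilde u$ with $\tilde u(t_0)=u(t_0)$. This $\tilde u$ lies in $C([-T,T];H^{\rho-\epsilon,1}_0)\cap C^1([-T,T];H^{\rho-3\epsilon,1}_0)$, and $u$ is in the same class. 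The $L^2$ Gronwall uniqueness argument of \autoref{prop:weak_limit_BO_solution} runs equally from time $t_0$ and in both time directions, so $\tilde u=u$. Applying Step 1 to $\tilde u$ at time $0$, that is, going backward by $t_0$, gives $\mathcal{E}_\rho(u_0)\le\mathcal{E}_\rho(u(t_0))$. Combined with Step 1, this is \eqref{eq:limit_energy_conservation}.

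\textbf{Main obstacle.} The delicate point is the uniqueness and restart step. We must make sure that uniqueness holds in the class of solutions that are continuous only with values in $H^{\rho-\epsilon,1}$ and are defined on intervals centred at $t_0$. We must also check that every ball in which the convexity is invoked is $\overline{B}_{\sqrt2 r_*}$, which is exactly why the stronger threshold $A_*=f(r_*)$ is imposed. The Gronwall argument only needs a bound on $\|\partial_x(u+v)\|_{L^\infty}$, which holds on $[t_0-T,t_0+T]$, so I expect it to go through.

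Once the energy is conserved, the Kadec property in \autoref{thm:spectral_strict_convexity} will upgrade the weak continuity \eqref{eq:CW} to strong continuity, which is what the main theorem needs.
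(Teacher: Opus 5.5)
Your proposal is correct and follows essentially the same route as the paper. Weak lower semicontinuity on $\overline{B}_{\sqrt{2}r_*}$, combined with exact conservation of $\mathcal{E}_\rho$ along the regularized flows, gives $\mathcal{E}_\rho(u(t))\le\mathcal{E}_\rho(u_0)$; restarting the construction from $u(t_0)\in\Omega_*$ and identifying the restarted limit with $u$ through the $L^2$ Gronwall uniqueness argument gives the reverse inequality. Your version is slightly cleaner than the paper's: you make the uniqueness identification explicit before evaluating at time $0$, and you correctly use $\|u_+(t_0)\|_{\rho,1}\le X(u_0)$, whereas the paper states a strict inequality.
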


\begin{proof}
Set $A_0:=\mathcal{E}_\rho(u_0)^{1/2}<f(r_*)$, and let $X_*<r_*$ be the smaller solution of $f(X_*)=A_0$. \autoref{prop:weak_limit_BO_solution} implies
\begin{equation} \label{eq:strict_ball_trapping}
\|u^\kappa(t)\|_{\rho,1}\le \sqrt{2}X_*,\qquad\|u(t)\|_{\rho,1}\le \sqrt{2}X_*
\end{equation}
for all $t$ and all sufficiently large $\kappa$. Fix $t\in\mathbb{R}$. By \autoref{lem:endpoint_weak_continuity},
\begin{equation*}
u^\kappa(t)\rightharpoonup u(t)\quad\text{in }H_0^{\rho,1}.
\end{equation*}
All states in \eqref{eq:strict_ball_trapping} lie in $\overline B_{\sqrt{2}r_*}(H_0^{\rho,1})$. Weak lower semicontinuity from \autoref{thm:spectral_strict_convexity}, together with conservation of the spectral energy along the regularized flow, gives
\begin{equation} \label{eq:energy_one_sided}
\mathcal{E}_\rho(u(t))\le\liminf_{\kappa\to\infty}\mathcal{E}_\rho(u^\kappa(t))=\mathcal{E}_\rho(u_0).
\end{equation}

Fix $t_0\in\mathbb{R}$ and restart the regularized construction from the datum $u(t_0)$. By \eqref{eq:energy_one_sided},
\begin{equation*}
\mathcal{E}_\rho(u(t_0))^{1/2}\le A_0<f(r_*),
\end{equation*}
and \eqref{eq:strict_ball_trapping} gives $\|u_+(t_0)\|_{\rho,1}<X_*$. Applying \eqref{eq:energy_one_sided} yields
\begin{equation*}
\mathcal{E}_\rho(u(t_0+s))
\le\mathcal{E}_\rho(u(t_0)).
\end{equation*}
Taking $s=-t_0$ gives
\begin{equation}\label{eq:energy_another_sided}
\mathcal{E}_\rho(u_0)
\le\mathcal{E}_\rho(u(t_0)).
\end{equation}
On the other hand, $s\mapsto u(t_0+s)$ is also a BO solution with initial datum $u(t_0)$, and this belongs to the uniqueness class of \autoref{prop:weak_limit_BO_solution}. Combining \eqref{eq:energy_another_sided} with \eqref{eq:energy_one_sided} proves \eqref{eq:limit_energy_conservation}.
\end{proof}

\begin{corollary}
\label{cor:endpoint_strong_continuity}
Under \eqref{eq:main_small_data}, \eqref{eq:BO} satisfies
\begin{equation*}
u\in C(\mathbb{R};H^{\rho,1}_0).
\end{equation*}
\end{corollary}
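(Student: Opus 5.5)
The plan is to combine the weak endpoint continuity of \autoref{lem:endpoint_weak_continuity} with the exact conservation of \autoref{prop:limit_spectral_energy_conservation}, and then apply the Kadec property of \autoref{thm:spectral_strict_convexity}. Fix $t\in\mathbb{R}$ and a sequence $t_j\to t$.

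By \eqref{eq:CW}, $u(t_j)\rightharpoonup u(t)$ weakly in $H^{\rho,1}_0$. By \eqref{eq:limit_energy_conservation}, $\mathcal{E}_\rho(u(t_j))=\mathcal{E}_\rho(u_0)=\mathcal{E}_\rho(u(t))$ for every $j$. In particular $\mathcal{E}_\rho(u(t_j))\to\mathcal{E}_\rho(u(t))$ holds trivially.

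The Kadec property needs all the states $u(t_j)$ and $u(t)$ to lie in $\overline{B}_{\sqrt{2}r_*}(H^{\rho,1}_0)\cap\mathbb{R}$. This follows from \eqref{eq:strict_ball_trapping}: under \eqref{eq:main_small_data} it gives $\|u(s)\|_{\rho,1}\le\sqrt{2}X_*<\sqrt{2}r_*$ for all $s\in\mathbb{R}$. The same estimate follows from \eqref{eq:limit_endpoint_bound}, because $x_1=X(u_0)<r_*$ under the stronger smallness assumption. Real-valuedness of $u(s)$ is inherited from the real-valued regularized trajectories, since it passes to the limit in $H^{\rho-\epsilon,1}_0$ via \eqref{eq:regularized_lower_radius_limit}. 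The Kadec property then yields $u(t_j)\to u(t)$ strongly in $H^{\rho,1}_0$. Since $t$ and the sequence $t_j$ were arbitrary, sequential continuity gives $u\in C(\mathbb{R};H^{\rho,1}_0)$.

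The only step requiring care is checking that the trapping ball produced by the dynamics sits inside the convexity ball $\overline{B}_{\sqrt{2}r_*}$, where the Hessian lower bound \eqref{eq:energy_hessian_coercive} holds. This is exactly why the threshold $A_*=f(r_*)$ was chosen. The bound $\|u_+\|_{\rho,1}\le X<r_*$ converts to a bound on the full norm because $u$ is real-valued, so $\|u\|_{\rho,1}^2=2\|u_+\|_{\rho,1}^2$ on zero-mean data. Apart from this check, the argument is a direct application of the earlier results.
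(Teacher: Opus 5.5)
Your proposal is correct and follows the paper's proof: weak convergence $u(t_j)\rightharpoonup u(t)$ from \autoref{lem:endpoint_weak_continuity}, equal energies from \autoref{prop:limit_spectral_energy_conservation}, all states in $\overline{B}_{\sqrt{2}r_*}(H_0^{\rho,1})$ by \eqref{eq:strict_ball_trapping}, and then the Kadec property of \autoref{thm:spectral_strict_convexity}. Your extra checks only make explicit what the paper leaves implicit: real-valuedness of the limit, the identity $\|u\|_{\rho,1}=\sqrt{2}\,\|u_+\|_{\rho,1}$, and $x_1=X(u_0)=X_*<r_*$.
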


\begin{proof}
Let $t_j\to t$. By \autoref{lem:endpoint_weak_continuity},
\begin{equation*}
u(t_j)\rightharpoonup u(t)
\quad\text{in }H_0^{\rho,1}.
\end{equation*}
By \autoref{prop:limit_spectral_energy_conservation},
\begin{equation*}
\mathcal{E}_\rho(u(t_j))=\mathcal{E}_\rho(u(t)).
\end{equation*}
All states lie in $\overline B_{\sqrt{2}r_*}(H_0^{\rho,1})$ by \eqref{eq:strict_ball_trapping}. The Kadec property in \autoref{thm:spectral_strict_convexity} therefore gives
\begin{equation*}
u(t_j)\to u(t)
\quad\text{strongly in }H_0^{\rho,1}.
\end{equation*}
\end{proof}

\begin{theorem}
\label{thm:endpoint_continuous_dependence}
For every $T>0$, the data-to-solution map is continuous from the small-data set $\Omega_*$ defined in \autoref{thm:main_theorem} into $C([-T,T];H^{\rho,1}_0)$.
\end{theorem}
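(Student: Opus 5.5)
The argument has two stages. First I show continuity in a lower analytic topology using an $L^2$ stability estimate. Then I upgrade to the endpoint $H^{\rho,1}_0$ topology with the same weak-convergence-plus-energy-conservation mechanism used in \autoref{cor:endpoint_strong_continuity}, now with the Kadec property of \autoref{thm:spectral_strict_convexity} applied uniformly in time.

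\textbf{Setup and lower-topology continuity.} Let $u_{0,j}\to u_0$ in $H^{\rho,1}_0$ with all data in $\Omega_*$, and let $u_j,u$ be the corresponding solutions. By \autoref{prop:weak_limit_BO_solution} and \eqref{eq:strict_ball_trapping}, every trajectory stays in $\overline B_{\sqrt2 r_*}(H^{\rho,1}_0)$. Here I use that $\mathcal{E}_\rho$ is continuous by \autoref{prop:spectral_coordinates_near_identity}, so $\mathcal{E}_\rho(u_{0,j})\to\mathcal{E}_\rho(u_0)<A_*^2$; hence the roots $X(u_{0,j})$ are eventually bounded by a fixed $X_\sharp<r_*$. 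For $w=u_j-u$, the energy identity \eqref{eq:weak_uniqueness_energy} holds with $L^\infty$ control of $\partial_x(u_j+u)$ from \eqref{eq:derivative_bound}, which is uniform in $j$ since $\|u_j\|_{\rho,1}\le\sqrt2X_\sharp$. Gronwall then gives $\sup_{|t|\le T}\|u_j(t)-u(t)\|_{L^2}\le e^{CT}\|u_{0,j}-u_0\|_{L^2}\to0$. Splitting frequencies exactly as in \autoref{thm:analytic_convergence}, with the high modes controlled by the uniform $H^{\rho,1}$ bound, gives $u_j\to u$ in $C([-T,T];H^{\rho-\epsilon,1}_0)$ for every $\epsilon>0$.

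\textbf{Upgrade to the endpoint.} I argue by contradiction. Suppose there are $\delta>0$, a subsequence, and times $t_j\in[-T,T]$ with $\|u_j(t_j)-u(t_j)\|_{\rho,1}\ge\delta$. Passing to a further subsequence, $t_j\to t_\infty$. By \autoref{cor:endpoint_strong_continuity}, $u(t_j)\to u(t_\infty)$ strongly, so $\|u_j(t_j)-u(t_\infty)\|_{\rho,1}\ge\delta/2$ eventually. Next, $u_j(t_j)\to u(t_\infty)$ in $H^{\rho-\epsilon,1}_0$: the previous step controls $u_j(t_j)-u(t_j)$ in that space, and $u(t_j)\to u(t_\infty)$ there as well. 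Since $u_j(t_j)$ is bounded in $H^{\rho,1}_0$, the identification argument of \autoref{lem:endpoint_weak_continuity} gives $u_j(t_j)\rightharpoonup u(t_\infty)$ in $H^{\rho,1}_0$. By \autoref{prop:limit_spectral_energy_conservation} for each $u_j$ and for $u$,
\[
\mathcal{E}_\rho(u_j(t_j))=\mathcal{E}_\rho(u_{0,j})\to\mathcal{E}_\rho(u_0)=\mathcal{E}_\rho(u(t_\infty)).
\]
All states lie in $\overline B_{\sqrt2 r_*}$, so the Kadec property in \autoref{thm:spectral_strict_convexity} gives $u_j(t_j)\to u(t_\infty)$ strongly, which is a contradiction. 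Therefore $\sup_{|t|\le T}\|u_j(t)-u(t)\|_{\rho,1}\to0$.

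\textbf{Main obstacle.} The delicate point is the uniformity needed to apply the earlier results to the whole family at once. The trapping radius, the Gronwall constant, and membership in $\overline B_{\sqrt2r_*}$ must be independent of $j$. This is where the continuity of $\mathcal{E}_\rho$ in $H^{\rho,1}_0$ (its real analyticity via $\Phi_\rho$) and the strict inequality $\mathcal{E}_\rho(u_0)^{1/2}<A_*$ enter: together they let me pick a single $X_\sharp<r_*$. I also need the condition $\|u_{0,j,+}\|_{\rho,1}<x_{\max}$ to hold eventually, and it does, because it is an open condition.
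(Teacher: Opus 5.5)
Your proposal is correct and follows the paper's proof essentially step for step: uniform trapping from the continuity of $\mathcal{E}_\rho$ and the monotonicity of $f$, then $L^2$ Gronwall stability plus frequency splitting to get convergence in $C([-T,T];H^{\rho-\epsilon,1}_0)$, then a contradiction argument along times $t_j\to t_\infty$ that combines weak convergence in $H^{\rho,1}_0$, conservation and continuity of $\mathcal{E}_\rho$, the Kadec property of \autoref{thm:spectral_strict_convexity}, and \autoref{cor:endpoint_strong_continuity}. The only difference is cosmetic: you use the strong continuity of $u$ before invoking Kadec (to get $\|u_j(t_j)-u(t_\infty)\|_{\rho,1}\ge\delta/2$), whereas the paper applies Kadec first and closes with the triangle inequality.
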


\begin{proof}
Let $u_{0,n}\to u_0$ in $H^{\rho,1}_0$, with $u_{0,n},u_0\in\Omega_*$, and let $u_n,u$ be the corresponding BO solutions. By \autoref{prop:spectral_coordinates_near_identity}, the spectral energy is continuous on $H_0^{\rho,1}\cap\mathbb{R}$, so
\begin{equation*}
A_n:=\mathcal{E}_\rho(u_{0,n})^{1/2}\to
A_0:=\mathcal{E}_\rho(u_0)^{1/2}<f(r_*).
\end{equation*}
Choose $\bar r$ such that the smaller root of $f(x)=A_0$ is less than $\bar r<r_*$. Since $f$ is strictly increasing on $[0,x_{max}]$, for all sufficiently large $n$,
\begin{equation} \label{eq:uniform_data_family_trapping}
\sup_{t\in\mathbb{R}}\bigl(\|(u_n)_+(t)\|_{\rho,1}\bigr)\le\bar r\qquad\sup_{t\in\mathbb{R}}\|u_+(t)\|_{\rho,1}\le \bar r.
\end{equation}

Set $w_n=u_n-u$. The $L^2$ energy estimate used in \autoref{prop:weak_limit_BO_solution}, together with \eqref{eq:uniform_data_family_trapping}, yields
\begin{equation} \label{eq:limit_L2_stability}
\sup_{|t|\le T}\|w_n(t)\|_{L^2}
\le C_{T,\rho,\bar r}\|u_{0,n}-u_0\|_{L^2}
\longrightarrow0.
\end{equation}
For any $\epsilon\in(0,\rho)$ and any integer $J\ge1$,
\begin{align*}
\|w_n(t)\|_{\rho-\epsilon,1}^2
\le{}\langle J\rangle^2e^{2(\rho-\epsilon)J}\|w_n(t)\|_{L^2}^2+e^{-2\epsilon J}\|w_n(t)\|_{\rho,1}^2.
\end{align*}
The last norm is uniformly bounded by \eqref{eq:uniform_data_family_trapping}. Choosing first $J$ and then $n$, using \eqref{eq:limit_L2_stability}, gives
\begin{equation} \label{eq:family_lower_radius_convergence}
u_n\to u\quad\text{in }C([-T,T];H^{\rho-\epsilon,1}_0).
\end{equation}

We next remove the $\epsilon$. Assume, by contradiction, that convergence fails in $C([-T,T];H_0^{\rho,1})$. Then there are $\eta>0$, a subsequence, and $t_n\in[-T,T]$ such that
\begin{equation*}
\|u_n(t_n)-u(t_n)\|_{\rho,1}\ge\eta.
\end{equation*}
After passing to a subsequence, $t_n\to t_*$. From \eqref{eq:family_lower_radius_convergence} and continuity of $u$ in the smaller space,
\begin{equation*}
u_n(t_n)\to u(t_*)\quad\text{in }H^{\rho-\epsilon,1}_0.
\end{equation*}
The uniform endpoint bound then implies
\begin{equation*}
u_n(t_n)\rightharpoonup u(t_*)\quad\text{in }H_0^{\rho,1}.
\end{equation*}
Energy conservation and continuity of the initial spectral energy give
\begin{equation*}
\mathcal{E}_\rho(u_n(t_n))=\mathcal{E}_\rho(u_{0,n})\to\mathcal{E}_\rho(u_0)=\mathcal{E}_\rho(u(t_*)).
\end{equation*}
By \autoref{cor:endpoint_strong_continuity}, 
$$u(t_n)\to u(t_*)\quad \text{in } H_0^{\rho,1}.$$
Hence by triangle inequality,
\begin{equation*}
\|u_n(t_n)-u(t_n)\|_{\rho,1}\to0,
\end{equation*}
a contradiction. Thus $u_n\to u$ in $C([-T,T];H_0^{\rho,1})$, which proves the asserted convergence in $H^{\rho,1}_0$.
\end{proof}

\begin{proof}[Proof of \autoref{thm:main_theorem}]
The smallness condition \eqref{eq:main_small_data} implies the regularized trapping condition \eqref{eq:small_energy_barrier}. Therefore, \autoref{prop:weak_limit_BO_solution} constructs a unique global BO solution in every smaller analytic topology. The stable root satisfies $X(u_0)<r_*$, so \autoref{prop:limit_spectral_energy_conservation} and \autoref{cor:endpoint_strong_continuity} give
\begin{equation*}
u\in C(\mathbb{R};H^{\rho,1}_0)\cap C^1(\mathbb{R};H^{\rho-3\epsilon,1}_0)
\end{equation*}
for every $\epsilon\in(0,\rho/3)$, together with spectral energy conservation and the claimed uniform bounds. Finally, \autoref{thm:endpoint_continuous_dependence} proves continuity of the data-to-solution map in the endpoint topology.
\end{proof}

\section*{Acknowledgments}
The author would like to thank Professor Qingtang Su for comments, suggestions, and corrections to an earlier version of this manuscript. The author was partially supported by NSFC 12494541 and NSFC 12421001. The observation of convexity of $\mathcal{E}_\rho(u)$ was made by GPT5.6sol, which help to remove the derivative loss of $\epsilon$, which strengthen the v1 conclusion (\autoref{prop:weak_limit_BO_solution}) of this manuscript.

\bibliographystyle{abbrv}
\bibliography{reference}

\end{document}